\let\oldtocsection=\tocsection
\let\oldtocsubsection=\tocsubsection 
\renewcommand{\tocsection}[2]{\vspace{0.5em}\hspace{0em}\oldtocsection{#1}{#2}}
\renewcommand{\tocsubsection}[2]{\vspace{0.5em}\hspace{1em}\oldtocsubsection{#1}{#2}}
\newtheorem{theoreme}{Theorem}[section]
\newtheorem{pro}[theoreme]{Proposition}
\newtheorem{lemma}[theoreme]{Lemma}
\newtheorem{rem}[theoreme]{Remark}
\theoremstyle{definition}
\numberwithin{equation}{section}
 \renewenvironment{proof}{{\bfseries \noindent Proof.}}{\demo}
\newcommand\xqed[1]{  \leavevmode\unskip\penalty9999 \hbox{}\nobreak\hfill
  \quad\hbox{#1}}
\newcommand\demo{\xqed{$\square$}}
\def\u2{\u^2}
\def\u3{\u^3}
\def\u4{\u^4}
\def\u5{\u^5}
\def\y1{\y^1}
\def\y2{\y^2}
\def\y3{\y^3}
\def\y4{\y^4}
\def\y5{\y^5}
\def\la {{\lambda}}
\newcommand {\nc}   {\newcommand}
\nc {\be}   {\begin{equation}} \nc {\ee}   {\end{equation}} \nc
\nc {\eeq}  {\end{eqnarray}} \nc {\beqs}
\nc {\eeqs} {\end{eqnarray*}}
\def\edc{\end{document}}
\providecommand{\abs}[1]{\lvert#1\rvert}
\numberwithin{dummy}{section}
\numberwithin{equation}{section} 
\begin{document}
\title[\fontsize{7}{9}\selectfont  ]{Stabilization results of a Lorenz
piezoelectric beam with partial viscous dampings }
\author{Mohammad Akil$^{1}$ , Abdelaziz Soufyane$^2$ and Youssef Belhamadia$%
^3$\vspace{0.5cm}}
\address{$^1$Univ. Polytechnique Hauts-de-France, INSA Hauts-de-France,
CERAMATHS-Laboratoire de Mat\'eriaux C\'eramiques et de Math\'ematiques,
F-59313 Valenciennes, France\vspace{0.25cm}\\
$^{2}$ Department of Mathematics, College of Science, University of Sharjah,
P.O.Box 27272, Sharjah, UAE.\vspace{0.25cm}\\
$^{3}$ \ Department of Mathematics and Statistics, American University of
Sharjah. Sharjah, UAE.\\
mohammad.akil@uphf.fr, asoufyane@sharjah.ac.ae , ybelhamadia@aus.edu}

\begin{abstract}
In this paper, we investigate the stabilization of a one-dimensional Lorenz
piezoelectric (Stretching system) with partial viscous dampings. First, by
using Lorenz gauge conditions, we reformulate our system to achieve the
existence and uniqueness of the solution. Next, by using General criteria of
Arendt-Batty, we prove the strong stability in different cases. Finally, we
prove that it is sufficient to control the stretching of the center-line of
the beam in $x-$direction to achieve the exponential stability. Numerical
results are also presented to validate our theoretical result.
\end{abstract}

\maketitle
\tableofcontents

\textbf{Keywords.} Lorenz Gauge - Piezoelectric beams - Stabilization -
Electromagnetic potentials-Exponential Stability.


\setcounter{equation}{0} 

\pagenumbering{roman} 
\pagenumbering{arabic} \setcounter{page}{1} 

\section{Introduction}

\noindent Piezoelectric materials have become more promising in aeronautic,
civil and space structures. It is known, since the 19th century that
materials such as quartz, Rochelle salt and barium titanate under pressure
produces electric charge\slash voltage, this phenomenon is called the direct
piezoelectric effect and was discovered by brothers Pierre and Jacques Curie
in 1880. This same materials, when subjected to an electric field, produce
proportional geometric tension. Such a phenomenon is known as the converse
piezoelectric effect and was discovered by Gabriel Lippmann in 1881 \cite%
{Rogacheva, Tiersten,JiashiYang}. In many studies related to piezoelectric
structures, the magnetic effect is neglected and only the mechanical the
mechanical effects are considered. In general the mechanical effects are
modelled by using Kirchhoff, Euler-Bernoulli or Midlin-Timoshenko
assumptions for small displacements \cite%
{Banks1996SmartMS,757931,RCSmith,JiashiYang}, and electrical and magnetic
effects are added to the system generally using electrostatic, quasi-static
and fully dynamic approaches (\cite%
{Destuynder1992TheoreticalNA,doi:10.1137/050629884,LASIECKA2009167,H.Tzou}).
Morris and \"Ozer in \cite{Morris-Ozer2013,Morris-Ozer2014}, proposed a
variational approach, a piezoelectric beam model with a magnetic effect,
based on the Euler-Bernoulli and Rayleigh beam theory for small
displacement. They considered an elastic beam covered by a piezoelectric
material on its upper and lower surfaces, isolated at the edges and
connected to an external electrical circuit to feed charge to the
electrodes. It is worth mentioning that it is well known that piezoelectric
beams without the magnetic effect, in which they are represented by a wave
equation \cite{Morris-Ozer2014}, are exactly observable \cite%
{doi:10.1137/1030001} and exponentially stable \cite{Tebou2006}. Also, there
exists a few results on piezoelectric material with different kind of
dampings \cite%
{Abdelaziz1,Abdelaziz2,Ramos2018,Ramos2019,zer2019PotentialFF,refId0,Akil-piezo,AnLiuKong}%
.

\noindent Recently, in \cite{OZER-LORENZ2021}, a nouvel infinite-dimensional
models, by a through variational approach, are introduced to describe
vibrations on a piezoelectric beam. Electro-Magnetic effects due to
Maxwell's equations factor in the models via the electric and magnetic
potentials. This system is described by

\begin{equation}  \label{Ozer}
\left\{%
\begin{array}{ll}
\rho v_{tt}-\alpha v_{xx}-\gamma (\phi+\eta_{t})_x=0, & \quad (x,t)\in
(0,L)\times (0,\infty), \\[0.1in] 
\displaystyle -\xi\left(\phi_x+\theta_{t}\right)_x+(\eta_t+\phi)-\frac{\gamma%
}{\varepsilon_3}v_x=0, & \quad (x,t)\in (0,L)\times (0,\infty), \\[0.1in] 
\displaystyle \left(\theta_t+\phi_x\right)_t-\frac{\mu}{\xi\varepsilon_3}%
\left(\eta_x-\theta\right)=\frac{i_s(t)}{\xi\varepsilon_3h}, & \quad
(x,t)\in (0,L)\times (0,\infty), \\[0.1in] 
\displaystyle \left(\eta_t+\phi\right)_t-\frac{\mu}{\varepsilon_3}%
\left(\eta_x-\theta\right)_x-\frac{\gamma}{\varepsilon_3}v_{tx}=0, & \quad
(x,t)\in (0,L)\times (0,\infty), \\[0.1in] 
\displaystyle v(0,t)=\alpha v_x(L,t)+\gamma \phi(L,t)+\gamma \eta_t(L,t)=0,
& \quad t\in (0,\infty), \\[0.1in] 
\xi\varepsilon_3\left(\theta_t+\phi_x\right)(0,t)=\xi\varepsilon_3\left(%
\theta_t+\phi_x\right)(L,t)=0, & \quad t\in (0,\infty), \\[0.1in] 
\mu\left(\theta-\eta_x\right)(0,t)=\mu\left(\theta-\eta_x\right)(L,t)=0 & 
\end{array}
\right.
\end{equation}
where $v,\theta_t+\phi_x,\eta_t+\phi$ and $\theta-\eta_x$ represents
respectively, the stretching of the centreline of the beam in $x-$direction,
electrical field component in $x-$direction, electrical field component in $%
z-$direction and magnetic field component in $y-$direction and $\xi=\frac{%
\varepsilon_1h^2}{12\varepsilon_3}$. The natural physical constants $\rho$, $%
\alpha$, $\gamma$, $\varepsilon_1$, $\varepsilon_3$, $\mu$ denotes the mass
density per unit volume, elastic stifness, piezoelectric coupling
coefficient, permittivity in $x$ and $z$ directions, and magnetic
permeability respectively. The conditions $\eqref{Ozer}_5$-$\eqref{Ozer}_7$,
represents respectively beam clamped on the left, Lateral force, First
charge moment, Current. The applied current $i_s(t)$ at the electrodes
effects only the stretching motion and the surface electrical continuity is
satisfied 
\begin{equation*}
\frac{d i_s(x,t)}{dx}=0. 
\end{equation*}
The author proved that this model fail to be asymptotically stable if the
material parameters satisfy certain conditions. To achieve at least
asymptotic stability the author proposed an additional controller. In this
paper, we study system \eqref{Ozer} without current acting on the electrode
and with different partial viscous damping acting on the stretching of the
centreline of the beam in $x-$direction, electrical field component in $x-$%
direction, electrical field component in $z-$direction and magnetic field
component in $y-$direction. This system in described by 
\begin{equation}  \label{Stretching}
\left\{%
\begin{array}{ll}
\rho v_{tt}-\alpha v_{xx}-\gamma (\phi+\eta_{t})_x+a v_t=0, & \quad (x,t)\in
(0,L)\times (0,\infty), \\[0.1in] 
\displaystyle -\xi\left(\phi_x+\theta_{t}\right)_x+(\eta_t+\phi)-\frac{\gamma%
}{\varepsilon_3}v_x=0, & \quad (x,t)\in (0,L)\times (0,\infty), \\[0.1in] 
\displaystyle \left(\theta_t+\phi_x\right)_t-\frac{\mu}{\xi\varepsilon_3}%
\left(\eta_x-\theta\right)+b\left(\theta_t+\phi_x\right)=0, & \quad (x,t)\in
(0,L)\times (0,\infty), \\[0.1in] 
\displaystyle \left(\eta_t+\phi\right)_t-\frac{\mu}{\varepsilon_3}%
\left(\eta_x-\theta\right)_x-\frac{\gamma}{\varepsilon_3}v_{tx}+c\left(%
\eta_t+\phi\right)=0, & \quad (x,t)\in (0,L)\times (0,\infty), \\[0.1in] 
\displaystyle v(0,t)=\alpha v_x(L,t)+\gamma \phi(L,t)+\gamma \eta_t(L,t)=0,
& \quad t\in (0,\infty), \\[0.1in] 
\xi\varepsilon_3\left(\theta_t+\phi_x\right)(0,t)=\xi\varepsilon_3\left(%
\theta_t+\phi_x\right)(L,t)=0, & \quad t\in (0,\infty), \\[0.1in] 
\mu\left(\theta-\eta_x\right)(0,t)=\mu\left(\theta-\eta_x\right)(L,t)=0 & 
\end{array}
\right.  \tag{${\rm Stretching}$}
\end{equation}
where $a,b,c>0$. In the first section we reformulate and we prove the
well-posedness of our system. In the second part we prove the strong
stability of system \eqref{Stretching}. Next, we prove the exponential
stability under partial viscous damping on the centreline of the beam in $x-$%
direction and$\backslash$or electrical field component in ($x$ and $z$%
)-direction. Finally, we numerically illustrate the exponential stability
decay of the natural energy $E(t)$ of \eqref{Lorenz} system. 

\section{Reformulation and Wellposedness}

\noindent System \eqref{Stretching} does not yield a unique solution since:

\begin{enumerate}
\item[$\bullet$] The magnetic potential vector component $\theta, \eta$ and
the electrical potential $\phi$ are not uniquely defined (see Equation (1)
in \cite{OZER-LORENZ2021} and \cite{refId0} .

\item[$\bullet$] The Lagrangian is invariant under certain transformations 
\cite{refId0}
\end{enumerate}

To obtain a unique solution, particular gauge conditions are presented in
electro-magnetic theory to completely decouple the electromagnetic equations
in \eqref{Stretching}. One of the most widely used gauges is Lorenz Gauges 
\cite%
{Morris-Ozer2014,OZER-LORENZ2021,refId0,Tataru2016,doi:10.1080/03605301003717100}%
). For the piezoelectric beam model, the Lorenz Gauge condition is given by 
\begin{equation}  \label{LGC}
-\xi \theta_x+\eta=\frac{\xi \varepsilon_3}{\mu}\phi_t, 
\tag{${\rm
\text{LGC}}$}
\end{equation}
with the boundary conditions 
\begin{equation}  \label{bctheta}
\theta(0,t)=\theta(L,t)=0.
\end{equation}
In the casse of \eqref{LGC}, the term $-\xi \theta_{tx}+\eta_t$ in $%
\eqref{Stretching}_2$ is transformed into $\frac{\xi\varepsilon_3}{\mu}%
\phi_{tt}$. As well, the terms $\phi_{tx}-\frac{\mu}{\xi\varepsilon_3}%
(\eta_x-\theta)$ and $\phi_t-\frac{\mu}{\varepsilon_3}(\eta_x-\theta)_x$ in $%
\eqref{Stretching}_3$ and $\eqref{Stretching}_4$ are transformed into $-%
\frac{\mu}{\xi \varepsilon_3}(\xi\theta_{xx}-\theta)$ and $-\frac{\mu}{\xi
\varepsilon_3}(\xi \eta_{xx}-\eta)$, respectively. This transformation not
only the $\phi-$equation to a wave equation but also the $\theta$ and $\eta$
equations. Therefore, both electric and magnetic equations are wave
equations. Then, the equations of motion \eqref{Stretching}-\eqref{bctheta}
respectively reduce to 
\begin{equation}  \label{Lorenz}
\left\{%
\begin{array}{ll}
\rho v_{tt}-\alpha v_{xx}-\gamma (\phi+\eta_{t})_x+a v_t=0, & \quad (x,t)\in
(0,L)\times (0,\infty), \\[0.1in] 
\displaystyle \phi_{tt}-\frac{\mu}{\varepsilon_3}\phi_{xx}+\frac{\mu}{%
\xi\varepsilon_3}\phi-\frac{\gamma \mu}{\xi\varepsilon_3^2}v_x=0, & \quad
(x,t)\in (0,L)\times (0,\infty), \\[0.1in] 
\displaystyle \theta_{tt}-\frac{\mu}{\varepsilon_3}\theta_{xx}+\frac{\mu}{%
\xi\varepsilon_3}\theta +b(\theta_t+\phi_x)=0, & \quad (x,t)\in (0,L)\times
(0,\infty), \\[0.1in] 
\eta_{tt}-\frac{\mu}{\varepsilon_3}\eta_{xx}+\frac{\mu}{\xi\varepsilon_3}%
\eta-\frac{\gamma}{\varepsilon_3}v_{tx}+ c(\eta_t+\phi)=0, & \quad (x,t)\in
(0,L)\times (0,\infty), \\[0.1in] 
\displaystyle v(0,t)=\alpha v_x(L,t)+\gamma \phi(L,t)+\gamma \eta_t(L,t)=0,
& \quad t\in (0,\infty), \\[0.1in] 
\phi_x(0,t)=\phi_x(L,t)=\eta_x(0,t)=\eta_x(L,t)=\theta(0,t)=\theta(L,t)=0, & \quad t\in (0,\infty),
\\[0.1in] 
(v,\phi,\eta,v_t,\phi_t,\eta_t)(\cdot,0)=(v^0,0,\eta^0,v^1,0,\eta^1), & 
\quad x\in (0,L).%
\end{array}
\right.  \tag{$${\rm Lorenz}}
\end{equation}

\begin{lemma}
\label{Energy} The natural energy $E(t)$ associated to \eqref{Lorenz} system
is the sum of Kinetic, potential, magnetic and electrical energies, i,e, 
\begin{equation}  \label{ENERGY1}
E(t)=E_k(t)+E_p(t)+E_{B}(t)+E_{elec}(t),
\end{equation}
where 
\begin{equation*}
\left\{%
\begin{array}{l}
\displaystyle E_k(t)=\frac{\rho}{2}\int_0^L\abs{v_t}^2dx,\ E_p(t)=\frac{%
\alpha}{2}\int_0^L\abs{v_x}^2dx,\ E_{B}(t)=\frac{\mu}{2}\int_0^L%
\abs{\theta-\eta_x}^2dx, \\[0.1in] 
\displaystyle E_{elec}(t)=\frac{1}{2}\int_0^L\left[\xi\varepsilon_3%
\abs{\theta_t+\phi_x}^2+\varepsilon_3\abs{\eta_t+\phi}^2\right]dx%
\end{array}
\right.
\end{equation*}
and 
\begin{equation}  \label{denergy}
\frac{d}{dt}E(t)=-a\int_0^L\abs{v_t}^2-b\xi \varepsilon_3\int_0^L%
\abs{\theta_t+\phi_x}^2dx-c\varepsilon_3\int_0^L\abs{\eta_t+\phi}^2dx.
\end{equation}
\end{lemma}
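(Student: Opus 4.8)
The first assertion, that $E=E_k+E_p+E_B+E_{elec}$, is merely the definition of the natural energy; the substance of the lemma is the dissipation law \eqref{denergy}. The plan is to differentiate $E$ along a strong solution of \eqref{Lorenz}, use the equations to eliminate the second-order time derivatives, integrate by parts in $x$ so that all derivatives land on the multipliers, and then observe that every cross-term that appears cancels, leaving exactly the three damping integrals; the identity for general finite-energy solutions then follows by the usual density argument.

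First I would deal with the mechanical energy. From $\frac{d}{dt}(E_k+E_p)=\rho\int_0^L v_t v_{tt}\,dx+\alpha\int_0^L v_x v_{xt}\,dx$, substitute $\rho v_{tt}=\alpha v_{xx}+\gamma(\phi+\eta_t)_x-av_t$ coming from $\eqref{Lorenz}_1$, and integrate the terms $\alpha\int v_t v_{xx}$ and $\gamma\int v_t(\phi+\eta_t)_x$ by parts in $x$. The contributions at $x=0$ vanish since $v(0,t)=0$ forces $v_t(0,t)=0$; the interior term $-\alpha\int v_{tx}v_x$ kills $\alpha\int v_x v_{xt}$; and the two boundary terms at $x=L$ add up to $v_t(L,t)\big[\alpha v_x(L,t)+\gamma\phi(L,t)+\gamma\eta_t(L,t)\big]$, which is $0$ by the natural boundary condition in $\eqref{Lorenz}_5$. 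This yields $\frac{d}{dt}(E_k+E_p)=-a\int_0^L\abs{v_t}^2\,dx-\gamma\int_0^L v_{tx}(\phi+\eta_t)\,dx$.

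Next I would deal with the electromagnetic energy. Using the gauge \eqref{LGC} to express $\phi_t$ and its $x$-derivative to express $\phi_{xt}$, combined with $\eqref{Lorenz}_3$ and $\eqref{Lorenz}_4$ — equivalently, these are just $\eqref{Stretching}_3$ and $\eqref{Stretching}_4$ rewritten — one arrives at $(\theta_t+\phi_x)_t=-\frac{\mu}{\xi\varepsilon_3}(\theta-\eta_x)-b(\theta_t+\phi_x)$ and $(\eta_t+\phi)_t=-\frac{\mu}{\varepsilon_3}(\theta-\eta_x)_x+\frac{\gamma}{\varepsilon_3}v_{tx}-c(\eta_t+\phi)$. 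Substituting these into $\frac{d}{dt}(E_B+E_{elec})=\mu\int_0^L(\theta-\eta_x)(\theta_t-\eta_{tx})+\xi\varepsilon_3\int_0^L(\theta_t+\phi_x)(\theta_t+\phi_x)_t+\varepsilon_3\int_0^L(\eta_t+\phi)(\eta_t+\phi)_t$, and integrating $-\mu\int(\eta_t+\phi)(\theta-\eta_x)_x$ by parts — the boundary term vanishes because $\theta(0,t)=\theta(L,t)=0$ and $\eta_x(0,t)=\eta_x(L,t)=0$ make $\theta-\eta_x$ vanish at both endpoints — every integral carrying the factor $\mu(\theta-\eta_x)$ collapses, since $\mu(\theta-\eta_x)\big[(\theta_t-\eta_{tx})-(\theta_t+\phi_x)+(\eta_{tx}+\phi_x)\big]=0$ pointwise. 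What survives is $\frac{d}{dt}(E_B+E_{elec})=-b\xi\varepsilon_3\int_0^L\abs{\theta_t+\phi_x}^2\,dx-c\varepsilon_3\int_0^L\abs{\eta_t+\phi}^2\,dx+\gamma\int_0^L(\eta_t+\phi)v_{tx}\,dx$.

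Adding the two contributions, the terms $-\gamma\int_0^L v_{tx}(\phi+\eta_t)\,dx$ and $+\gamma\int_0^L(\eta_t+\phi)v_{tx}\,dx$ cancel identically, and \eqref{denergy} follows. The whole computation is routine bookkeeping, but two points need care and are where I would expect a slip: invoking the boundary condition $\eqref{Lorenz}_5$ at exactly the moment the mechanical surface terms at $x=L$ appear, and using the endpoint conditions on $\theta$ and $\eta_x$ to see that the magnetic field $\theta-\eta_x$ vanishes on the boundary, which is what makes the integration by parts in the $E_{elec}$ term boundary-free and, in turn, forces the total cancellation of all the $(\theta-\eta_x)$ cross-terms between the magnetic and electrical energies. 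I regard that last cancellation as the crux of the proof.
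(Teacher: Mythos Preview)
Your proof is correct. The mechanical-energy computation, the handling of the boundary term at $x=L$ via $\eqref{Lorenz}_5$, the vanishing of $\theta-\eta_x$ at both endpoints, and the pointwise cancellation $(\theta_t-\eta_{tx})-(\theta_t+\phi_x)+(\eta_{tx}+\phi_x)=0$ are all fine, and the final $\gamma$--cross terms do cancel as you say.

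The organization, however, differs from the paper's in a way worth noting. The paper works entirely inside the Lorenz formulation: it multiplies $\eqref{Lorenz}_3$ and $\eqref{Lorenz}_4$ (the decoupled wave equations for $\theta$ and $\eta$) by $\xi\varepsilon_3\overline{(\theta_t+\phi_x)}$ and $\varepsilon_3\overline{(\eta_t+\phi)}$, and must then invoke the gauge condition \eqref{LGC} explicitly---twice---to convert the resulting spatial terms back into the physical field variables (see the manipulations producing \eqref{E3} and \eqref{E6}). Your route sidesteps this: you differentiate $E_B+E_{elec}$, which is already written in the physical variables $\theta-\eta_x$, $\theta_t+\phi_x$, $\eta_t+\phi$, and substitute the evolution equations for those quantities directly---i.e., the $\eqref{Stretching}_3$--$\eqref{Stretching}_4$ form. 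That makes the LGC invisible (it is absorbed in the equivalence of the two systems) and reduces the bookkeeping to the single identity you highlight as the crux. The paper's route has the advantage of being self-contained within the \eqref{Lorenz} system one is actually analyzing; yours is shorter and makes the cancellation structure more transparent.
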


\begin{proof}
Multiplying $\eqref{Lorenz}_1$ by $\overline{v_t}$, integrating by parts
over $(0,L)$ and taking the real part, we get 
\begin{equation}  \label{E1}
\frac{d}{dt}E_k(t)+\frac{d}{dt}E_p(t)+\Re\left(\gamma\int_0^L(\phi+\eta_t)%
\overline{v_{xt}}dx\right)+a\int_0^L\abs{v_t}^2dx=0.
\end{equation}
Multiplying $\eqref{Lorenz}_3$ by $\xi \varepsilon_3\overline{%
\left(\theta_t+\phi_x\right)}$, integrating over $(0,L)$, we get 
\begin{equation}  \label{E2}
\begin{array}{l}
\displaystyle \frac{\xi\varepsilon_3}{2}\frac{d}{dt}\int_0^L\abs{\theta_t}%
^2dx+\frac{\mu}{2}\frac{d}{dt}\int_0^L\abs{\theta}^2dx+\Re\left(\xi%
\varepsilon_3\int_0^L\theta_{tt}\overline{\phi_x}dx\right)+\Re\left(\mu\xi%
\int_0^L\theta_x\overline{\left(\theta_{tx}+{\phi}_{xx}\right)}dx\right) \\%
[0.1in] 
\displaystyle +\Re\left(\mu\int_0^L\theta \overline{\phi_x}dx\right)+b\xi
\varepsilon_3\int_0^L\abs{\theta_t+\phi_x}^2dx=0.%
\end{array}%
\end{equation}
Using \eqref{LGC} in the fourth integral in \eqref{E2}, we obtain 
\begin{equation}  \label{E3}
\begin{array}{l}
\displaystyle \Re\left(\mu\xi\int_0^L\theta_x\overline{\left(\theta_{tx}+{%
\phi}_{xx}\right)}dx\right)=\frac{\xi\varepsilon_3}{2}\frac{d}{dt}\int_0^L%
\abs{\phi_x}^2dx-\Re\left(\mu\int_0^L\eta_x\overline{\theta_t}dx\right) \\%
[0.1in] 
\displaystyle -\Re\left(\mu\int_0^L\eta_x\overline{\phi_x}%
dx\right)+\Re\left(\xi \varepsilon_3\int_0^L\phi_{xt}\overline{\theta_t}%
dx\right)%
\end{array}%
\end{equation}
Inserting \eqref{E3} in \eqref{E2}, we get 
\begin{equation}  \label{E4}
\begin{array}{l}
\displaystyle \frac{\xi\varepsilon_3}{2}\frac{d}{dt}\int_0^L%
\abs{\theta_t+\phi_x}^2dx+\frac{\mu}{2}\frac{d}{dt}\int_0^L\abs{\theta}^2dx
-\Re\left(\mu\int_0^L\eta_x\overline{\theta_t}dx\right)-\Re\left(\mu\int_0^L%
\eta_x\overline{\phi_x}dx\right) \\[0.1in] 
\displaystyle +\Re\left(\mu\int_0^L\theta \overline{\phi_x}dx\right)+b\xi
\varepsilon_3\int_0^L\abs{\theta_t+\phi_x}^2dx=0.%
\end{array}%
\end{equation}
Multiplying $\eqref{Lorenz}_4$ by $\varepsilon_3\overline{(\eta_t+\phi)}$,
integrating by parts over $(0,L)$, we get 
\begin{equation}  \label{E5}
\begin{array}{l}
\displaystyle \frac{\varepsilon_3}{2}\frac{d}{dt}\int_0^L\abs{\eta_t}^2dx+%
\frac{\mu}{2}\frac{d}{dt}\int_0^L\abs{\eta_x}^2dx+\Re\left(\varepsilon_3%
\int_0^L\eta_{tt}\overline{\phi}dx\right)+\Re\left(\mu\int_0^L\eta_x%
\overline{\phi_x}dx\right) \\[0.1in] 
\displaystyle +\Re\left(\frac{\mu}{\xi}\int_0^L\eta\overline{(\eta_t+\phi)}%
dx\right)-\Re\left(\gamma\int_0^Lv_{xt}\overline{(\phi+\eta_t)}%
dx\right)+c\varepsilon_3\int_0^L\abs{\eta_t+\phi}^2dx=0.%
\end{array}%
\end{equation}
Using \eqref{LGC} in the fifth integral over \eqref{E5} and integrating by
parts over $(0,L)$, we obtain 
\begin{equation}  \label{E6}
\begin{array}{l}
\displaystyle \Re\left(\frac{\mu}{\xi}\int_0^L\eta\overline{(\eta_t+\phi)}%
dx\right)=\frac{\varepsilon_3}{2}\frac{d}{dt}\int_0^L\abs{\phi}%
^2dx+\Re\left(\varepsilon_3\int_0^L\phi_t\overline{\eta_t}dx\right) \\%
[0.1in] 
\displaystyle -\Re\left(\mu\int_0^L\theta\overline{\eta_{xt}}%
dx\right)-\Re\left(\mu\int_0^L\theta\overline{\phi_x}dx\right).%
\end{array}%
\end{equation}
Inserting \eqref{E6} in \eqref{E5}, we get 
\begin{equation}  \label{E7}
\begin{array}{l}
\displaystyle \frac{\varepsilon_3}{2}\frac{d}{dt}\int_0^L\abs{\eta_t+\phi}%
^2dx+\frac{\mu}{2}\frac{d}{dt}\int_0^L\abs{\eta_x}^2dx-\Re\left(\mu\int_0^L%
\theta\overline{\eta_{xt}}dx\right)+\Re\left(\mu\int_0^L\eta_x\overline{%
\phi_x}dx\right) \\[0.1in] 
\displaystyle -\Re\left(\mu\int_0^L\theta\overline{\phi_x}%
dx\right)-\Re\left(\gamma\int_0^Lv_{xt}\overline{(\phi+\eta_t)}%
dx\right)+c\varepsilon_3\int_0^L\abs{\eta_t+\phi}^2dx=0.%
\end{array}%
\end{equation}
Adding \eqref{E1}, \eqref{E4} and \eqref{E7}, we get 
\begin{equation*}
\frac{d}{dt}\left(E_k(t)+E_p(t)+E_{B}(t)+E_{elec}(t)\right)=-a\int_0^L%
\abs{v_t}^2-b\xi \varepsilon_3\int_0^L\abs{\theta_t+\phi_x}%
^2dx-c\varepsilon_3\int_0^L\abs{\eta_t+\phi}^2dx.
\end{equation*}
Thus, we obtain \eqref{denergy}. The proof has been completed.
\end{proof}


\begin{lemma}
\label{Norm1} If $E(t)=0$ then $v=\phi=\eta=\theta=0$.
\end{lemma}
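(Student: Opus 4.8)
The plan is to extract everything possible from the sign structure of the energy and then, decisively, to bring in the initial data of \eqref{Lorenz}. By Lemma~\ref{Energy} the energy is non-increasing, and it is manifestly non-negative, so $E(t)=0$ forces $E\equiv0$; since \eqref{ENERGY1} writes $E$ as a sum of five non-negative terms, each of them vanishes identically on $(0,L)\times(0,\infty)$, i.e.
\[
v_t\equiv 0,\qquad v_x\equiv 0,\qquad \theta-\eta_x\equiv 0,\qquad \theta_t+\phi_x\equiv 0,\qquad \eta_t+\phi\equiv 0 .
\]
From $v_x\equiv0$ and the clamped condition $v(0,t)=0$ in \eqref{Lorenz} we immediately obtain $v\equiv0$. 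The genuine content of the lemma is to conclude $\phi\equiv\eta\equiv\theta\equiv0$, and here one must be careful: the potentials $\phi,\eta,\theta$ enter $E$ only through $v_x$, $\theta-\eta_x$, $\theta_t+\phi_x$ and $\eta_t+\phi$, so the energy is blind to, for instance, a spatially uniform oscillation of $\phi$; the conditions $\phi(\cdot,0)=0$ and $\phi_t(\cdot,0)=0$ built into \eqref{Lorenz} will have to be used.

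The second step reduces everything to a scalar equation in $\eta$. From $\eta_t+\phi\equiv0$ and $\theta-\eta_x\equiv0$ we write $\phi=-\eta_t$ and $\theta=\eta_x$. Inserting these, together with $v\equiv0$, into the $\eta$-equation of \eqref{Lorenz} kills both the damping term $c(\eta_t+\phi)$ and the coupling $\tfrac{\gamma}{\varepsilon_3}v_{tx}$, leaving the Klein--Gordon equation
\[
\eta_{tt}-\frac{\mu}{\varepsilon_3}\,\eta_{xx}+\frac{\mu}{\xi\varepsilon_3}\,\eta=0\quad\text{on }(0,L)\times(0,\infty),
\]
with the Neumann conditions $\eta_x(0,t)=\eta_x(L,t)=0$ from \eqref{Lorenz}. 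The same equation is precisely what \eqref{LGC} gives after the substitutions $\theta=\eta_x$, $\phi=-\eta_t$, and the $\phi$- and $\theta$-equations of \eqref{Lorenz} then reduce, respectively, to a $t$- and an $x$-derivative of the displayed equation, so they add nothing.

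The third step feeds in the initial data. Since $\phi=-\eta_t$ with $\phi(\cdot,0)=0$, we get $\eta_t(\cdot,0)=0$; since $\phi_t=-\eta_{tt}$ with $\phi_t(\cdot,0)=0$, we get $\eta_{tt}(\cdot,0)=0$. Freezing the Klein--Gordon equation at $t=0$ therefore yields the two-point boundary value problem $\eta_{xx}(\cdot,0)=\tfrac1\xi\,\eta(\cdot,0)$ on $(0,L)$, $\eta_x(0,0)=\eta_x(L,0)=0$; integrating this against $\overline{\eta(\cdot,0)}$ (the boundary term vanishes) gives $-\int_0^L\abs{\eta_x(\cdot,0)}^2dx=\tfrac1\xi\int_0^L\abs{\eta(\cdot,0)}^2dx$, and since $\xi>0$ both sides must vanish, so $\eta(\cdot,0)\equiv0$. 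Thus $\eta$ has zero Cauchy data for the Klein--Gordon equation with Neumann conditions, and the standard energy identity for that equation — the quantity $\tfrac12\int_0^L\big(\abs{\eta_t}^2+\tfrac{\mu}{\varepsilon_3}\abs{\eta_x}^2+\tfrac{\mu}{\xi\varepsilon_3}\abs{\eta}^2\big)dx$ is conserved because its time derivative is a boundary term that vanishes — forces $\eta\equiv0$. Hence $\phi=-\eta_t\equiv0$ and $\theta=\eta_x\equiv0$, and with $v\equiv0$ the lemma follows.

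The main obstacle is exactly the point flagged in the first paragraph: $E$ measures only $v_x$, $v_t$ and the three field combinations, so the bare non-negativity argument recovers $v$ but leaves a whole undamped ``bulk'' mode of $\eta$ (hence of $\phi$ and $\theta$) invisible; eliminating it is possible only by exploiting $\phi(\cdot,0)=\phi_t(\cdot,0)=0$, through the boundary value problem obtained by freezing the Klein--Gordon equation at $t=0$ and then uniqueness for the resulting Cauchy problem. As usual one runs this for sufficiently regular solutions, where $\eta_{tt}(\cdot,0)$ is meaningful pointwise, and extends to the general case by density; everything else is routine integration by parts.
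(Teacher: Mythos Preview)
Your proof is correct, but it takes a different route from the paper's. After obtaining $v\equiv0$ both arguments agree; the divergence is in which potential to kill first. The paper goes directly for $\phi$: with $v=0$, the $\phi$--equation in \eqref{Lorenz} is already a homogeneous Klein--Gordon equation with Neumann conditions and, crucially, the initial data $\phi(\cdot,0)=\phi_t(\cdot,0)=0$ are imposed explicitly in \eqref{Lorenz}, so uniqueness gives $\phi\equiv0$ at once. Then $\theta_t+\phi_x=0$ yields $\theta_t=0$, the $\theta$--equation collapses to the elliptic problem $\xi\theta_{xx}-\theta=0$ with Dirichlet conditions, forcing $\theta\equiv0$; finally \eqref{LGC} gives $\eta\equiv0$.

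You instead eliminate $\phi$ and $\theta$ in favour of $\eta$ via $\phi=-\eta_t$, $\theta=\eta_x$, reduce to a Klein--Gordon equation for $\eta$ with Neumann conditions, and then import the zero initial data of $\phi$ to deduce $\eta_t(\cdot,0)=\eta_{tt}(\cdot,0)=0$; freezing at $t=0$ produces the elliptic problem $\xi\eta_{xx}(\cdot,0)=\eta(\cdot,0)$ with Neumann conditions, whose sign structure forces $\eta(\cdot,0)=0$, and Klein--Gordon energy conservation finishes. This is perfectly valid but slightly longer: the paper avoids your extra ``freeze at $t=0$'' step (and the attendant regularity caveat about $\eta_{tt}(\cdot,0)$) by working with the variable whose Cauchy data are already zero. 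Your approach, on the other hand, makes the role of \eqref{LGC} transparent, since it appears only as a redundant consistency check rather than as the final step.
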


\begin{proof}
By \eqref{ENERGY1}, $E(t)=0$ implies that 
\begin{equation}  \label{1Norm1}
v_t(x,t)=v_x(x,t)=\theta(x,t)-\eta_x(x,t)=\theta_t(x,t)+\phi_x(x,t)=%
\eta_t(x,t)+\phi(x,t)=0.
\end{equation}
Using the fact that $v(0,t)=0$ in \eqref{1Norm1}, we get 
\begin{equation}  \label{2Norm1}
v(x,t)=0\quad \text{in}\quad (0,L)\times (0,\infty).
\end{equation}
Using \eqref{2Norm1} and $\eqref{Lorenz}_2$, we get the following system 
\begin{equation}  \label{3Norm1}
\left\{%
\begin{array}{lll}
\phi_{tt}(x,t)-\frac{\mu}{\varepsilon_3}\phi_{xx}(x,t)+\frac{\mu}{%
\xi\varepsilon_3}\phi(x,t)=0 & \text{in} & (0,L)\times (0,\infty), \\ 
\phi_x(0,t)=\phi_x(L,t)=0 & \text{in} & (0,\infty), \\ 
\phi(x,0)=\phi_t(x,0)=0 & \text{in} & (0,L).%
\end{array}
\right.
\end{equation}
Applying Fourrier Transforms with respect to the variable $x$ on %
\eqref{3Norm1} and using the zeros initial conditions, we get 
\begin{equation}  \label{4Norm1}
\phi(x,t)=0.
\end{equation}
Using \eqref{4Norm1} and \eqref{1Norm1}, we get $\theta_t=0$. Using the fact
that $\theta_t=0$ and $\eqref{Lorenz}_3$, we get 
\begin{equation}  \label{5Norm1}
\left\{%
\begin{array}{lll}
\xi\theta_{xx}(x,t)-\theta(x,t)=0 & \text{in} & (0,L)\times (0,\infty), \\ 
\theta(0,t)=\theta(L,t)=0 & \text{in} & (0,\infty).%
\end{array}
\right.
\end{equation}
The solution of \eqref{5Norm1} is $\theta(x,t)=0$. Finally, using \eqref{LGC}
and the fact that $\theta(x,t)=0$, we get $\eta(x,t)=0$. The proof has been
completed.
\end{proof}

\noindent Now, we define the following state 
\begin{equation*}
U=\left(v,z,u^1,u^2,u^3\right)
\end{equation*}
such that $z=u_t$, $u^1=\theta-\eta_x$, $u^2=\theta_t+\phi_x$ and $%
u^3=\eta_t+\phi$. with the following initial condition 
\begin{equation*}
U(\cdot,0)=U_0=\left(v(\cdot,0),z(\cdot,0),\theta(\cdot,0)-\eta_x(\cdot,0),%
\theta_t(\cdot,0),\eta_t(\cdot,0)\right)
\end{equation*}
By the choices of the states, $\eqref{Lorenz}_{2}$ and \eqref{LGC}, we
obtain the following compatibility condition: 
\begin{equation}  \label{compatibility}
\xi u^2_x-u^3+\frac{\gamma}{\varepsilon_3}v_x=0.
\end{equation}
We define the linear space 
\begin{equation}  \label{H}
\mathcal{H}=\left\{U\in \left(H_L^1(0,L)\times
\left(L^2(0,L)\right)^4\right), u^2_x\in L^2(0,L),\ u^2(0)=u^2(L)=0,\ \xi
u^2_x-u^3+\frac{\gamma}{\varepsilon_3}v_x=0\right\}
\end{equation}
and the bilinear form on $\mathcal{H}\times \mathcal{H}$ 
\begin{equation}  \label{bilinear}
b(U,\widetilde{U})=\int_0^L\left(\alpha v_x\overline{\widetilde{v}_x}+\rho z%
\overline{\widetilde{z}}+\mu u^1\overline{\widetilde{u^1}}+\xi
\varepsilon_3u^2\overline{\widetilde{u^2}}+\varepsilon_3u^3\overline{%
\widetilde{u^3}}\right)dx.
\end{equation}

\begin{rem}
Using \eqref{compatibility}, the. bilinear form $b$ can be written as 
\begin{equation}  \label{bilinear1}
b(U,\widetilde{U})=\int_0^L\left(\alpha v_x\overline{\widetilde{v}_x}+\rho z%
\overline{\widetilde{z}}+\mu u^1\overline{\widetilde{u^1}}+\xi
\varepsilon_3u^2\overline{\widetilde{u^2}}+\varepsilon_3\left(\xi u^2_x+%
\frac{\gamma}{\varepsilon_3}v_x\right)\left(\xi\overline{\widetilde{u^2}_x}+%
\frac{\gamma}{\varepsilon_3}\overline{\widetilde{v_x}}\right)\right)dx.
\end{equation}
\end{rem}

\begin{lemma}
(See \cite{OZER-LORENZ2021})The bilinear form $b$ is symmetric, continuous
and coercive on $\mathcal{H}\times \mathcal{H}$.
\end{lemma}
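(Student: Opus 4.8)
The plan is to verify the three properties directly, measuring $\mathcal H$ in its natural norm. Since every $U\in\mathcal H$ has $v(0)=0$, Poincaré's inequality gives $\int_0^L\abs{v}^2\,dx\le c_P\int_0^L\abs{v_x}^2\,dx$, so the norm inherited from $H_L^1(0,L)\times\big(L^2(0,L)\big)^4$ together with the constraint $u^2_x\in L^2(0,L)$ is equivalent to
\[
\norm{U}_{\mathcal H}^2=\int_0^L\Big(\abs{v_x}^2+\abs{z}^2+\abs{u^1}^2+\abs{u^2}^2+\abs{u^2_x}^2+\abs{u^3}^2\Big)\,dx .
\]
I will bound $b(U,\widetilde U)$ above and below in terms of this quantity.

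Symmetry and continuity are routine. Because $\alpha,\rho,\mu,\xi\varepsilon_3,\varepsilon_3$ are real and positive, \eqref{bilinear} gives $b(U,\widetilde U)=\overline{b(\widetilde U,U)}$ (so $b$ is symmetric/Hermitian) and $b(U,U)\ge 0$. Applying the Cauchy–Schwarz inequality to each of the five integrals in \eqref{bilinear} separately yields $\abs{b(U,\widetilde U)}\le C\,\norm{U}_{\mathcal H}\,\norm{\widetilde U}_{\mathcal H}$ with $C=\max\{\alpha,\rho,\mu,\xi\varepsilon_3,\varepsilon_3\}$.

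The substantive part is coercivity. Taking $\widetilde U=U$ in \eqref{bilinear},
\[
b(U,U)=\int_0^L\Big(\alpha\abs{v_x}^2+\rho\abs{z}^2+\mu\abs{u^1}^2+\xi\varepsilon_3\abs{u^2}^2+\varepsilon_3\abs{u^3}^2\Big)\,dx ,
\]
so each of $\int\abs{v_x}^2,\int\abs{z}^2,\int\abs{u^1}^2,\int\abs{u^2}^2,\int\abs{u^3}^2$ is bounded by $b(U,U)/\min\{\alpha,\rho,\mu,\xi\varepsilon_3,\varepsilon_3\}$. The only component of $\norm{U}_{\mathcal H}^2$ not visible in $b(U,U)$ is $\int_0^L\abs{u^2_x}^2\,dx$, and this is precisely where the compatibility condition \eqref{compatibility} enters: from $\xi u^2_x=u^3-\tfrac{\gamma}{\varepsilon_3}v_x$ and Young's inequality,
\[
\xi^2\int_0^L\abs{u^2_x}^2\,dx\le 2\int_0^L\abs{u^3}^2\,dx+\frac{2\gamma^2}{\varepsilon_3^2}\int_0^L\abs{v_x}^2\,dx\le C'\,b(U,U).
\]
Summing the estimates gives $b(U,U)\ge c\,\norm{U}_{\mathcal H}^2$ for some $c>0$ depending only on $\alpha,\rho,\mu,\xi,\varepsilon_3,\gamma$ and $L$, which is coercivity.

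The only genuinely non-routine point is the control of $\norm{u^2_x}_{L^2(0,L)}$ in the last step: on the ambient space $H_L^1(0,L)\times\big(L^2(0,L)\big)^4$ the form $b$ carries no information about $u^2_x$, so it is membership in $\mathcal H$ — i.e.\ the compatibility condition \eqref{compatibility} — that makes coercivity hold. Equivalently, one may start from \eqref{bilinear1}; then $\int_0^L\abs{u^2_x}^2\,dx$ is recovered from the last term $\varepsilon_3\abs{\xi u^2_x+\tfrac{\gamma}{\varepsilon_3}v_x}^2$ and the term $\alpha\abs{v_x}^2$ by the identical Young inequality, so nothing is gained or lost by that reformulation.
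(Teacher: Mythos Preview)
Your proof is correct. The paper's argument differs only in the coercivity step: it works from the rewritten form \eqref{bilinear1}, expands the mixed term $\varepsilon_3\abs{\xi u^2_x+\tfrac{\gamma}{\varepsilon_3}v_x}^2$, and then applies Young's inequality with a parameter $k$ that must be chosen in the window $\tfrac{\gamma\xi}{\alpha+\gamma^2/\varepsilon_3}<k<\tfrac{\xi\varepsilon_3}{\gamma}$ to keep the coefficients of both $\abs{v_x}^2$ and $\abs{u^2_x}^2$ positive. Your route is more direct: from \eqref{bilinear} the quantity $b(U,U)$ is already a diagonal sum of squares, so it immediately dominates $\norm{v_x}^2,\norm{z}^2,\norm{u^1}^2,\norm{u^2}^2,\norm{u^3}^2$, and you then invoke the compatibility relation once to bound $\norm{u^2_x}^2$ by $\norm{u^3}^2+\norm{v_x}^2$. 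Both arguments rest on exactly the same structural fact (the constraint \eqref{compatibility} ties $u^2_x$ to quantities already controlled), but yours avoids the parameter optimisation and is slightly cleaner; you even anticipate the paper's variant in your closing paragraph.
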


\begin{proof}
The bilinear form \eqref{bilinear} is symmetric and by using the
Poincar\'e's inequality on $u^2$ terms, we can check easily the continuity.
For the coercivity, using \eqref{bilinear1} and the generalized Young's
inequality, we get 
\begin{equation*}
b(U,U)\geq \int_0^L\left(\rho \abs{z}^2+\mu\abs{u^1}^2+\xi\varepsilon_3%
\abs{u^2}^2+\left(\alpha+\frac{\gamma^2}{\varepsilon_3}-\frac{\gamma\xi}{k}%
\right)\abs{v_x}^2+\left(\varepsilon_3\xi^2-\gamma\xi k\right)\abs{u^2_x}%
^2\right)dx
\end{equation*}
By choosing 
\begin{equation*}
\frac{\gamma\xi}{\alpha+\frac{\gamma^2}{\varepsilon_3}}<k<\frac{%
\xi\varepsilon_3}{\gamma}
\end{equation*}
then the coefficients of $\abs{v_x}^2$ and $\abs{u^2_x}^2$ are positive.
Therefore, 
\begin{equation*}
b(U,U)\geq C\|U\|_{\mathcal{H}},
\end{equation*}
where $C=\displaystyle{\min\left(\rho,\mu,\xi\varepsilon_3,\left(\alpha+%
\frac{\gamma^2}{\varepsilon_3}-\frac{\gamma\xi}{k}\right),\left(%
\varepsilon_3\xi^2-\gamma\xi k\right)\right)}$. The proof has been completed.
\end{proof}


\begin{lemma}
$\mathcal{H}$ is a Hilbert space equipped by the inner product $b(U,%
\widetilde{U})$.
\end{lemma}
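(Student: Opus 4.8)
The plan is to realize $\norm{U}_b:=\sqrt{b(U,U)}$ as a norm equivalent to a standard Hilbertian norm on an ambient product space, and then to check that $\mathcal H$ sits inside that space as a \emph{closed} subspace. First I would note that $b$ is genuinely an inner product: bilinearity and symmetry are immediate from \eqref{bilinear}, and positive-definiteness is nothing but the coercivity established in the preceding lemma, which yields $b(U,U)\ge C\norm{U}_{\mathcal H}^2$ with $C>0$; in particular $b(U,U)=0$ forces $U=0$.

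Next I would fix the underlying topology. Set
\[
\mathcal V:=H^1_L(0,L)\times L^2(0,L)\times L^2(0,L)\times H^1_0(0,L)\times L^2(0,L),
\]
a Hilbert space as a finite product of Hilbert spaces, and observe that $\mathcal H\subset\mathcal V$, since the requirements $u^2_x\in L^2(0,L)$ and $u^2(0)=u^2(L)=0$ in \eqref{H} say precisely that $u^2\in H^1_0(0,L)$. The explicit coercivity bound from the previous proof controls $\norm{z}_{L^2}$, $\norm{u^1}_{L^2}$, $\norm{u^2}_{L^2}$, $\norm{v_x}_{L^2}$ and $\norm{u^2_x}_{L^2}$; using Poincar\'e's inequality on $v$ (legitimate since $v(0)=0$) and the compatibility relation \eqref{compatibility} to dominate $\norm{u^3}_{L^2}$ by $\norm{u^2_x}_{L^2}+\norm{v_x}_{L^2}$, one obtains $b(U,U)\ge c\norm{U}_{\mathcal V}^2$. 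Together with the continuity of $b$ (which gives the reverse bound), this shows $\norm{\cdot}_b$ and $\norm{\cdot}_{\mathcal V}$ are equivalent norms on $\mathcal H$.

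It then suffices to show $\mathcal H$ is closed in $\mathcal V$. Let $(U_n)_n=(v_n,z_n,u^1_n,u^2_n,u^3_n)$ lie in $\mathcal H$ with $U_n\to U=(v,z,u^1,u^2,u^3)$ in $\mathcal V$. Then $v_n\to v$ in $H^1_L(0,L)$ and $u^2_n\to u^2$ in $H^1_0(0,L)$, hence $(v_n)_x\to v_x$ and $(u^2_n)_x\to u^2_x$ in $L^2(0,L)$, while $u^3_n\to u^3$ in $L^2(0,L)$. Passing to the limit in the identity $\xi(u^2_n)_x-u^3_n+\frac{\gamma}{\varepsilon_3}(v_n)_x=0$, valid in $L^2(0,L)$ for each $n$, yields $\xi u^2_x-u^3+\frac{\gamma}{\varepsilon_3}v_x=0$; and $u^2(0)=u^2(L)=0$ persists because $u^2\in H^1_0(0,L)$, equivalently by continuity of the trace. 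Thus $U\in\mathcal H$, so $\mathcal H$ is closed in $\mathcal V$, hence complete for $\norm{\cdot}_{\mathcal V}$ and, by the norm equivalence above, complete for $\norm{\cdot}_b$. Therefore $(\mathcal H,b)$ is a Hilbert space.

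The only delicate point is this closedness step, and within it the passage to the limit in the compatibility condition \eqref{compatibility}: that is permissible precisely because the $b$-norm controls $\norm{u^2_x}_{L^2}$ and not merely $\norm{u^2}_{L^2}$, which is exactly the content of the coercivity estimate of the preceding lemma. The rest is routine.
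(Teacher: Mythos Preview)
Your argument is correct. The paper states this lemma without proof, so there is no ``paper's proof'' to compare against; you have supplied the missing details. The structure you use --- realizing $\mathcal H$ as a closed subspace of the ambient Hilbert space $\mathcal V=H^1_L\times L^2\times L^2\times H^1_0\times L^2$ and invoking the norm equivalence given by the coercivity lemma --- is the natural one, and the key observation that the compatibility condition \eqref{compatibility} is an $L^2$-closed constraint (because convergence in $\mathcal V$ entails $L^2$-convergence of $u^2_x$, $v_x$, and $u^3$) is exactly what is needed.

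One cosmetic remark: since the form $b$ in \eqref{bilinear} carries complex conjugates, you should say \emph{sesquilinear} and \emph{Hermitian} rather than \emph{bilinear} and \emph{symmetric}; nothing in the argument changes.
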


\noindent We define the unbounded linear operator $\mathcal{A}:D(\mathcal{A}%
)\longrightarrow \mathcal{H}$, by 
\begin{equation*}
\mathcal{A}U=%
\begin{pmatrix}
z \\[0.1in] 
\displaystyle\frac{\alpha }{\rho }v_{xx}+\frac{\gamma }{\rho }u_{x}^{3} \\%
[0.1in] 
\displaystyle u^{2}-u_{x}^{3} \\[0.1in] 
\displaystyle-\frac{\mu }{\xi \varepsilon _{3}}u^{1} \\[0.1in] 
\displaystyle-\frac{\mu }{\varepsilon _{3}}u_{x}^{1}+\frac{\gamma }{%
\varepsilon _{3}}z_{x}%
\end{pmatrix}%
\end{equation*}%
and 
\begin{equation*}
D(\mathcal{A})=\left\{ 
\begin{array}{l}
U=(v,z,u^{1},u^{2},u^{3})\in \mathcal{H};\ z\in H_{L}^{1}(0,L),v\in
H^{2}(0,L)\cap H_{L}^{1}(0,L),\ u^{1},u^{2}\in H_{0}^{1}(0,L), \\ 
u^{3}\in H^{1}(0,L)\quad \text{and}\quad \alpha v_{x}(L)+\gamma u^{3}(L)=0%
\end{array}%
\right\}
\end{equation*}%
%

\begin{pro}
(See \cite{OZER-LORENZ2021}) We have:

\begin{enumerate}
\item $0\in \rho\left(\mathcal{A}\right)$.

\item The operator $\mathcal{A}$ satisfies $\mathcal{A}^{\ast}=-\mathcal{A}$
on $\mathcal{H}$, and $\mathcal{A}$ is a generator of a unitary semigroup $%
\left(e^{t\mathcal{A}}\right)_{t\geq 0}$.
\end{enumerate}
\end{pro}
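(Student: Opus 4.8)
The plan is to establish (1) by inverting $\mathcal{A}$ explicitly on $\mathcal{H}$, and then to deduce (2) cheaply from (1) together with the skew-symmetry of $\mathcal{A}$.

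\emph{Step 1 ($0\in\rho(\mathcal{A})$).} Given $F=(f^1,f^2,f^3,f^4,f^5)\in\mathcal{H}$ I would solve $\mathcal{A}U=F$ for $U=(v,z,u^1,u^2,u^3)$. The first and fourth components give at once $z=f^1\in H^1_L(0,L)$ and $u^1=-\tfrac{\xi\varepsilon_3}{\mu}f^4\in H^1_0(0,L)$, and with these the fifth equation reduces to $\xi f^4_x+\tfrac{\gamma}{\varepsilon_3}f^1_x=f^5$, i.e.\ exactly the compatibility identity built into the definition \eqref{H} of $\mathcal{H}$, so it holds automatically. For the remaining unknowns I combine the second and third equations with the constraint $u^3=\xi u^2_x+\tfrac{\gamma}{\varepsilon_3}v_x$ and eliminate $v_{xx}$; this yields the scalar Dirichlet problem
\begin{equation*}
-\xi u^2_{xx}+\left(1+\frac{\gamma^2}{\alpha\varepsilon_3}\right)u^2=f^3+\frac{\gamma}{\alpha\varepsilon_3}\left(\rho f^2+\gamma f^3\right)\ \text{ in }(0,L),\qquad u^2(0)=u^2(L)=0 .
\end{equation*}
Its associated bilinear form is coercive on $H^1_0(0,L)$ since $\xi>0$ and $1+\tfrac{\gamma^2}{\alpha\varepsilon_3}>0$, so Lax--Milgram and elliptic regularity produce a unique $u^2\in H^2(0,L)\cap H^1_0(0,L)$. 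Then $v$ is the unique solution of $\alpha v_{xx}=\rho f^2+\gamma f^3-\gamma u^2$ with $v(0)=0$ and $\left(\alpha+\tfrac{\gamma^2}{\varepsilon_3}\right)v_x(L)=-\gamma\xi u^2_x(L)$, the latter being precisely $\alpha v_x(L)+\gamma u^3(L)=0$ once we set $u^3:=\xi u^2_x+\tfrac{\gamma}{\varepsilon_3}v_x\in H^1(0,L)$. A direct substitution shows that this $U$ solves all five equations, lies in $D(\mathcal{A})$ and in $\mathcal{H}$, and tracking the constants in the elliptic estimates gives $\|U\|_{\mathcal{H}}\le C\|F\|_{\mathcal{H}}$. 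Hence $\mathcal{A}$ is a bijection of $D(\mathcal{A})$ onto $\mathcal{H}$ with $\mathcal{A}^{-1}\in\mathcal{L}(\mathcal{H})$, i.e.\ $0\in\rho(\mathcal{A})$; in particular $\mathcal{A}$ is closed.

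\emph{Step 2 (skew-symmetry and density).} For $U\in D(\mathcal{A})$ I would compute $\Re\,b(\mathcal{A}U,U)$ by integrating by parts in each term of \eqref{bilinear}: every interior term appears together with its complex conjugate and is therefore purely imaginary, while the boundary contributions collapse — using $z(0)=0$ and $u^1(0)=u^1(L)=0$ — to $\left(\alpha v_x(L)+\gamma u^3(L)\right)\overline{z(L)}$, which vanishes by the boundary condition defining $D(\mathcal{A})$. Thus $\Re\,b(\mathcal{A}U,U)=0$, and polarization gives $b(\mathcal{A}U,V)=-b(U,\mathcal{A}V)$ for all $U,V\in D(\mathcal{A})$. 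Equivalently, $S:=\mathcal{A}^{-1}$, which by Step 1 is bounded and everywhere defined, satisfies $b(SF,G)=-b(F,SG)$; hence $S$ is skew-adjoint, in particular normal, and since $S$ is injective, $(\operatorname{Ran}S)^{\perp}=\ker S^{\ast}=\ker S=\{0\}$, so $D(\mathcal{A})=\operatorname{Ran}S$ is dense in $\mathcal{H}$.

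\emph{Step 3 (skew-adjointness and the unitary semigroup).} Since $S=\mathcal{A}^{-1}$ is bounded, injective, skew-adjoint and has dense range, $\mathcal{A}^{\ast}=(S^{-1})^{\ast}=(S^{\ast})^{-1}=(-S)^{-1}=-\mathcal{A}$, so $\mathcal{A}$ is skew-adjoint. By Stone's theorem $\mathcal{A}$ generates a strongly continuous group of unitary operators on $\mathcal{H}$; its restriction to $t\ge 0$ is the unitary semigroup $\left(e^{t\mathcal{A}}\right)_{t\ge0}$. (Lumer--Phillips gives an equivalent conclusion: $\pm\mathcal{A}$ are densely defined and dissipative by Step 2, and since $\rho(\mathcal{A})$ is open and contains $0$ by Step 1 it also contains some $\lambda>0$, which supplies the required range condition.)

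The only genuinely non-formal part is Step 1 — specifically, reducing the coupled resolvent system to the single second-order ODE for $u^2$ and carrying the boundary condition $\alpha v_x(L)+\gamma u^3(L)=0$ correctly through the compatibility relation of $\mathcal{H}$. Once the right elliptic problems are isolated, their unique solvability and the a priori bound are routine, and Steps 2--3 are essentially bookkeeping.
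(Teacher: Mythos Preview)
Your proof is correct. The paper itself does not prove this proposition but simply refers the reader to \cite{OZER-LORENZ2021}; your argument supplies a self-contained proof that the cited reference presumably contains in some form. The strategy---explicitly inverting $\mathcal{A}$ by reducing the resolvent system to a coercive Dirichlet problem for $u^2$ via the compatibility relation, then deducing skew-adjointness from the bounded skew-symmetric inverse $S=\mathcal{A}^{-1}$ and invoking Stone's theorem---is standard and clean. The key observation that the fifth equation of $\mathcal{A}U=F$ collapses to the compatibility condition $\xi f^4_x+\tfrac{\gamma}{\varepsilon_3}f^1_x=f^5$ already encoded in $\mathcal{H}$ is exactly right, and your handling of the mixed boundary condition $\alpha v_x(L)+\gamma u^3(L)=0$ through the constraint $u^3=\xi u^2_x+\tfrac{\gamma}{\varepsilon_3}v_x$ is correct. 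Since there is no in-paper proof to compare against, there is nothing further to contrast.
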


\noindent The system \eqref{Lorenz} can be written as 
\begin{equation}  \label{Evolution}
U_t=\left(\mathcal{A}-B\right)U,\quad U(0)=U_0.
\end{equation}
where 
\begin{equation*}
BU:=\left(0,\frac{a}{\rho}z,0,bu^2,cu^3\right)^{\top}.
\end{equation*}
It is easy to see that the operator $B$ is a bounded operator. Let us denote 
$\mathcal{A}_{a,b,c}=\mathcal{A}-B$. The operator $\mathcal{A}_{a,b,c}$
defined by \eqref{Evolution} with domain $D(\mathcal{A}_{a,b,c})=D(\mathcal{A%
})$ is densely defined in $\mathcal{H}$. Moreover, $\mathcal{A}_{a,b,c}$ is
the infinitesimal generator of $C_0-$semigroup of contractions. Therefore,
by Lumer-Philips theorem if $U_0\in D(\mathcal{A})$ solution of %
\eqref{Evolution} then $U\in C([0,T];D(\mathcal{A}))\cap C^1([0,T];\mathcal{H%
})$. 

\section{Strong Stability}

\noindent The aim of this section is to analyse the strong stability of
system $\eqref{Evolution}$. The main result of this section is the following
theorems.

\begin{theoreme}
\label{THM-ST-ST1} The $C_0-$semigroup of contractions $(e^{t\mathcal{A}%
_{a,b,c}})_{t\geq 0}$ is strongly stable in $\mathcal{H}$ is the sense that $%
\displaystyle{\lim_{t\to +\infty}\|e^{t\mathcal{A}_{a,b,c}}U_0\|_\mathcal{H}%
=0}$, in the following cases:

\begin{enumerate}
\item[$\mathbf{Case 1:}$] $(a,b,c)\neq (0,0,0)$.

\item[$\mathbf{Case 2:}$] $a=0$ and $(b,c)\neq (0,0)$.

\item[$\mathbf{Case 3:}$] $b=0$ and $(a,c)\neq (0,0)$.

\item[$\mathbf{Case 4:}$] $c=0$ and $(a,b)\neq (0,0)$.

\item[$\mathbf{Case 5:}$] $a\neq 0$ and $(b,c)=(0,0)$.

\item[$\mathbf{Case 6:}$] $b\neq 0$ and $(a,c)=(0,0)$.
\end{enumerate}
\end{theoreme}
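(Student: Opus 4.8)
The plan is to apply the general criterion of Arendt--Batty: the contraction semigroup $(e^{t\mathcal{A}_{a,b,c}})_{t\ge 0}$ is strongly stable in $\mathcal{H}$ as soon as $\sigma(\mathcal{A}_{a,b,c})\cap i\mathbb{R}$ is at most countable and $\sigma_p(\mathcal{A}_{a,b,c}^{\ast})\cap i\mathbb{R}=\emptyset$. First I would note that $D(\mathcal{A}_{a,b,c})=D(\mathcal{A})$ controls one extra derivative in every component relative to $\mathcal{H}$, so by the Rellich--Kondrachov theorem the embedding $D(\mathcal{A})\hookrightarrow\mathcal{H}$ is compact; since $\mathcal{A}_{a,b,c}$ generates a $C_0$-semigroup we have $\rho(\mathcal{A}_{a,b,c})\neq\emptyset$, and every resolvent $(\lambda-\mathcal{A}_{a,b,c})^{-1}\colon\mathcal{H}\to D(\mathcal{A})\hookrightarrow\mathcal{H}$ is then compact. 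Hence $\sigma(\mathcal{A}_{a,b,c})$ consists only of isolated eigenvalues of finite multiplicity, so $\sigma(\mathcal{A}_{a,b,c})\cap i\mathbb{R}$ is automatically countable; moreover, using $\mathcal{A}^{\ast}=-\mathcal{A}$ (the Proposition above) and the fact that $B$ is self-adjoint for $b(\cdot,\cdot)$, one has $\lambda\in\sigma_p(\mathcal{A}_{a,b,c})$ if and only if $\overline{\lambda}\in\sigma_p(\mathcal{A}_{a,b,c}^{\ast})$. Thus the whole theorem reduces to one claim: for every $\lambda\in\mathbb{R}$, the equation $\mathcal{A}_{a,b,c}U=i\lambda U$ with $U=(v,z,u^1,u^2,u^3)\in D(\mathcal{A})$ forces $U=0$, in each of the six configurations.

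Fix such $\lambda$ and $U$. Taking the real part of $b(\mathcal{A}_{a,b,c}U,U)=i\lambda\,b(U,U)$ and reading off the dissipation identity behind \eqref{denergy} in Lemma~\ref{Energy} (with $v_t,\theta_t+\phi_x,\eta_t+\phi$ replaced by $z,u^2,u^3$) yields
\[
a\,\|z\|_{L^2(0,L)}^2+b\,\xi\varepsilon_3\,\|u^2\|_{L^2(0,L)}^2+c\,\varepsilon_3\,\|u^3\|_{L^2(0,L)}^2=0 .
\]
Hence $z\equiv 0$ whenever $a\neq 0$, $u^2\equiv 0$ whenever $b\neq 0$, and $u^3\equiv 0$ whenever $c\neq 0$; in each of Cases~1--6 (Case~1 already subsuming the others) at least one such vanishing is available. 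It therefore suffices to establish three implications for the first-order system
\[
z=i\lambda v,\qquad \alpha v_{xx}+\gamma u^3_x-az=i\lambda\rho z,\qquad u^2-u^3_x=i\lambda u^1,
\]
\[
-\tfrac{\mu}{\xi\varepsilon_3}u^1-bu^2=i\lambda u^2,\qquad -\tfrac{\mu}{\varepsilon_3}u^1_x+\tfrac{\gamma}{\varepsilon_3}z_x-cu^3=i\lambda u^3,
\]
supplemented by the compatibility relation \eqref{compatibility} and the boundary conditions built into $D(\mathcal{A})$ (in particular $v(0)=0$, $\alpha v_x(L)+\gamma u^3(L)=0$, $u^1\in H^1_0(0,L)$, $u^2(0)=u^2(L)=0$): namely, each of ``$z\equiv 0$'', ``$u^2\equiv0$'', ``$u^3\equiv0$'' must force $U=0$. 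Since the first applies whenever $a\neq0$, the second whenever $b\neq0$ and the third whenever $c\neq0$, together they cover all six cases.

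The cascades run as follows. If $z\equiv 0$: then $v\equiv0$ (for $\lambda\neq0$), the $v$-equation forces $u^3_x\equiv0$ so $u^3$ is constant, the $u^3$-equation forces $u^1$ affine, and since $u^1\in H^1_0(0,L)$ this gives $u^1\equiv0$; then the $u^3$-equation gives $(i\lambda+c)u^3=0$ so $u^3\equiv0$, and the $u^1$-equation gives $u^2\equiv0$. If $u^2\equiv0$: the $u^1$-equation gives $u^1\equiv0$, the $u^3$-equation gives $u^3_x\equiv0$, and then the $v$-equation together with $z=i\lambda v$ and $v(0)=0$ forces $v\equiv0$ (the coefficient $ai\lambda-\rho\lambda^2$ being nonzero for $\lambda\neq0$), hence $u^3\equiv0$. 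If $u^3\equiv0$ (only needed when $a=b=0$): eliminating $u^2=i\lambda u^1$ between the $u^1$- and $u^3$-components reduces $u^1$ to $(\lambda^2-\mu/(\xi\varepsilon_3))\,u^1=0$; when $\lambda^2\neq\mu/(\xi\varepsilon_3)$ this gives $u^1\equiv u^2\equiv0$, then \eqref{compatibility} gives $v_x\equiv0$, so $v\equiv0$. The case $\lambda=0$ amounts to $0\in\rho(\mathcal{A}_{a,b,c})$ and follows from the same elimination ($z\equiv0$ automatically; the reduced equation for $u^2$ is a Dirichlet problem $\xi u^2_{xx}=\kappa u^2$ with $\kappa>0$, forcing $u^2\equiv0$, and then $v\equiv0$ via the boundary conditions).

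The delicate point -- the step I expect to be the main obstacle -- is the resonant frequency $\lambda^2=\mu/(\xi\varepsilon_3)$ in the ``$u^3\equiv0$'' case, where the algebraic elimination degenerates and the missing relation has to be recovered from the boundary. With $u^3\equiv0$, the compatibility relation together with $u^2=i\lambda u^1$ gives $u^1_x$ proportional to $v_x$, hence (using $u^1(0)=v(0)=0$) $u^1$ proportional to $v$; the beam equation becomes $\alpha v_{xx}+\rho\lambda^2 v=0$, so $v=A\sin(\omega x)$ with $\omega=\lambda\sqrt{\rho/\alpha}$. If $\cos(\omega L)\neq0$ then $\alpha v_x(L)+\gamma u^3(L)=\alpha v_x(L)=0$ forces $A=0$; if $\cos(\omega L)=0$ then $\sin(\omega L)\neq0$ and $u^1(L)=0$ (as $u^1\in H^1_0(0,L)$) gives $v(L)=0$, again $A=0$. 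Thus $v\equiv0$ and the cascade closes. Propagating the vanishing through the remaining Lorenz-gauged equations exactly as in Lemma~\ref{Norm1} then yields $U=0$ in every case, so $\sigma(\mathcal{A}_{a,b,c})\cap i\mathbb{R}=\emptyset$, and Arendt--Batty gives the claimed strong stability.
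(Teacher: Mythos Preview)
Your overall strategy is exactly the paper's: compact resolvent plus Arendt--Batty reduces everything to ruling out eigenvalues on $i\mathbb{R}$, and the dissipation identity immediately kills $z$, $u^2$, or $u^3$ depending on which of $a,b,c$ is nonzero. Your reduction to the three single implications ``$z\equiv0\Rightarrow U=0$'', ``$u^2\equiv0\Rightarrow U=0$'', ``$u^3\equiv0\Rightarrow U=0$'' is a clean streamlining of the paper's case-by-case argument, and your $z\equiv0$ cascade is correct and matches Case~5 there.

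There is, however, a genuine gap in your $u^2\equiv0$ cascade (needed for Cases~2 and~6). After you obtain $u^1\equiv0$ and $u^3\equiv k$ constant, you claim that ``the $v$-equation together with $z=i\lambda v$ and $v(0)=0$ forces $v\equiv0$''. This is false: the $v$-equation reduces to the second-order ODE $\alpha v_{xx}=(ai\lambda-\rho\lambda^2)\,v$, and a single Dirichlet condition $v(0)=0$ leaves a one-parameter family of solutions $v=A\sinh(\omega x)$ (or $A\sin(\omega x)$). The nonvanishing of the coefficient $ai\lambda-\rho\lambda^2$ does not help here. What you are missing is one more relation linking $v$ and $u^3$: either the compatibility condition \eqref{compatibility}, which with $u^2\equiv0$ reads $u^3=\tfrac{\gamma}{\varepsilon_3}v_x$, or the fifth equation, which (with $u^1\equiv0$) gives $\tfrac{\gamma}{\varepsilon_3}i\lambda v_x=(i\lambda+c)\,u^3$. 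Either way, since $u^3$ is constant you get $v_x$ constant, hence $v_{xx}=0$; substituting back into the $v$-equation then gives $(\rho\lambda^2-ai\lambda)\,v=0$ and so $v\equiv0$. This is precisely how the paper closes Case~6.

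Finally, your entire ``$u^3\equiv0$'' branch, including the resonance analysis at $\lambda^2=\mu/(\xi\varepsilon_3)$, is not needed for Theorem~\ref{THM-ST-ST1}: none of Cases~1--6 has $a=b=0$. That situation is the content of Theorem~\ref{THM-ST-ST2}, where the paper shows strong stability \emph{fails} at the resonant frequency when condition \eqref{SC} is violated; you may want to revisit your argument there, since you appear to reach the opposite conclusion by exploiting $u^1(L)=0$.
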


\begin{proof}
Since the resolvent of $\mathcal{A}_{a,b,c}$ is compact in $\mathcal{H}$,
then according to Arendt-Batty theorem see (Page 837 in \cite{Arendt01}),
system \eqref{Lorenz} is strongly stable if and only if $\mathcal{A}$
doesn't have pure imaginary eigenvalues, that is, $\sigma(\mathcal{A})\cap i%
\mathbb{R}=\emptyset$. We have already shown that $0\in \rho(\mathcal{A}%
_{a,b,c})$, and still need to show that $\sigma(\mathcal{A}_{a,b,c})\cap i%
\mathbb{R}^{\ast}=\emptyset$. for this aim, suppose by contradiction that
there exists $\lambda\in \mathbb{R}^{\ast}$ and $U\in D(\mathcal{A}%
_{a,b,c})\backslash \{0\}$ such that 
\begin{equation}  \label{KER}
\mathcal{A}_{a,b,c}U=i\lambda U.
\end{equation}
Equivalently, we have: 
\begin{eqnarray}
z&=&i\la v,  \label{KER1} \\[0.1in]
\la^2\rho v+\alpha v_{xx}+\gamma u^3_x-az&=&0,  \label{KER2} \\[0.1in]
u^2-u^3_x&=&i\la u^1,  \label{KER3} \\[0.1in]
-\frac{\mu}{\xi \varepsilon_3}u^1-bu^2&=&i\la u^2,  \label{KER4} \\[0.1in]
-\frac{\mu}{\varepsilon_3}u^1_x+\frac{\gamma}{\varepsilon_3}i\la v_x-cu^3&=&i%
\la u^3.  \label{KER5}
\end{eqnarray}
A straightforward calculation gives: 
\begin{equation*}
0=\Re\left<i\la U,U\right>_{\mathcal{H}}=\Re\left<\mathcal{A}%
_{a,b,c}U,U\right>_{\mathcal{H}}=-a\int_0^L\abs{z}^2dx-b\xi
\varepsilon_3\int_0^L\abs{u^2}^2dx-c\varepsilon_3\int_0^L\abs{u^3}^2dx.
\end{equation*}
Consequently, we deduce that: 
\begin{equation}  \label{Ker-diss}
az=bu^2=cu^3=0.
\end{equation}
\textbf{Case 1}: From \eqref{Ker-diss}, we get $z=u^2=u^3=0$. Using the fact
that $\la \neq 0$, \eqref{KER1} and \eqref{KER4}, we get $v=0$ and $u^1$.
Thus, $U=0$ and consequently $\mathcal{A}$ has no pure imaginary eigenvalues.%
\newline
\textbf{Case 2}: From \eqref{Ker-diss}, we get $u^2=u^3=0$. Then, from %
\eqref{KER4} we obtain 
\begin{equation}  \label{Case2-Eq1}
u^1=0.
\end{equation}
Using \eqref{Ker-diss}, \eqref{Case2-Eq1} and the fact that $\la\neq 0$ in %
\eqref{KER5}, we get $v_x=0$. Using the boundary condition $v(0)=0$ and the
fact that $\la \neq 0$, we obtain $v=0$. Using the fact that that $v=0$ and $%
\la \neq 0$ in \eqref{KER1}, we get $z=0$. Thus, $U=0$ and consequently $%
\mathcal{A}$ has no pure imaginary eigenvalues.\newline
\textbf{Case 3}: From \eqref{Ker-diss}, we get $z=u^3=0$. Then, from %
\eqref{KER1} and the fact that $\la\neq 0$, we get $v=0$. Using the fact
that $v=u^3=0$ and $u^1(0)=0$ in \eqref{KER5}, we get $u^1=0$. Using the
fact that $u^3=u^1=0$ in \eqref{KER3}, we get $u^2=0$. Thus, $U=0$ and
consequently $\mathcal{A}$ has no pure imaginary eigenvalues.\newline
\textbf{Case 4}: From \eqref{Ker-diss}, we get $z=u^2=0$. Then, from %
\eqref{KER1}, \eqref{KER4} and the fact that $\la \neq 0$, we get $v=u^1=0$.
Using $v=u^1=0$ and the fact that $\la \neq 0$ in \eqref{KER5}, we get $%
u^3=0 $. Thus, $U=0$ and consequently $\mathcal{A}$ has no pure imaginary
eigenvalues.\newline
\textbf{Case 5}: From \eqref{Ker-diss}, we get $z=0$. Using the facts that $%
z=0$ and $\la \neq 0$ in \eqref{KER1}, we get $v=0$. Using $z=v=0$ in %
\eqref{KER2}, we obtain $u^3_x=0$, it follows that 
\begin{equation}  \label{Case5-EQ1}
u^3=k.
\end{equation}
Using the fact that $v=0$ and inserting \eqref{Case5-EQ1} in \eqref{KER5},
we get 
\begin{equation}  \label{Case5-EQ2}
-\frac{\mu}{\varepsilon_3}u^1=i\la kx+k_1.
\end{equation}
Using the fact that $u^1(0)=u^1(L)=0$ in \eqref{Case5-EQ2}, we get $k=k_1=0$%
, it follows that 
\begin{equation}  \label{Case5-EQ3}
u^1=u^3=0.
\end{equation}
Inserting \eqref{Case5-EQ3} in \eqref{KER4} and using the fact that $\la %
\neq 0$, we get $u^2=0$. Thus, $U=0$ and consequently $\mathcal{A}$ has no
pure imaginary eigenvalues.\newline
\textbf{Case 6}: From \eqref{Ker-diss}, we get $u^2=0$. Then, from %
\eqref{KER4}, we get $u^1=0$. Using the fact that $u^2=u^1=0$ in \eqref{KER3}%
, we get $u^3_x=0$. Using the fact that $u^1=u_x^3=0$ in \eqref{KER5}, we
get 
\begin{equation}  \label{Case6-EQ1}
\frac{\gamma}{\varepsilon_3}v_x=u^3.
\end{equation}
Deriving the above equation and using the fact that $u^3_x=0$, we obtain 
\begin{equation}  \label{Case6-EQ2}
v_{xx}=0.
\end{equation}
Inserting \eqref{Case6-EQ2} in \eqref{KER2} and using the fact that $\la\neq
0$, we get $v=0$. Then, from \eqref{Case6-EQ1}, we obtain $u^3=0$. Thus, $%
U=0 $ and consequently $\mathcal{A}$ has no pure imaginary eigenvalues.
\end{proof}


\begin{theoreme}
\label{THM-ST-ST2} Assume that $c\neq 0$ and $(a,b)=(0,0)$. Then, the $C_0-$%
semigroup of contractions $\left(e^{t\mathcal{A}_{0,0,c}}\right)$ is
strongly stable on $\mathcal{H}$ in the sense that $\displaystyle{%
\lim_{t\to+\infty}\|e^{t\mathcal{A}_{0,0,c}}\|}=0$ for all $U_0\in \mathcal{H%
}$ if and only if 
\begin{equation*}  \label{SC}
{\tag{${\rm SC}$}} \frac{\mu\rho}{\xi\varepsilon_3\alpha}\neq \frac{%
(2n+1)^2\pi^2}{4 L^2}.
\end{equation*}
\end{theoreme}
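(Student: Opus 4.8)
The plan is to follow the Arendt--Batty route used in the proof of Theorem~\ref{THM-ST-ST1}. Since the resolvent of $\mathcal{A}_{0,0,c}$ is compact and $0\in\rho(\mathcal{A}_{0,0,c})$, strong stability is equivalent to $\sigma(\mathcal{A}_{0,0,c})\cap i\mathbb{R}^{\ast}=\emptyset$, so both implications reduce to analysing $\mathcal{A}_{0,0,c}U=i\lambda U$ for $\lambda\in\mathbb{R}^{\ast}$ and $U\in D(\mathcal{A}_{0,0,c})$. Exactly as in Theorem~\ref{THM-ST-ST1}, the dissipation identity forces $c\varepsilon_3\int_0^L\abs{u^3}^2\,dx=0$, hence $u^3\equiv 0$, and \eqref{KER1}--\eqref{KER5} hold with $a=b=0$.

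For the sufficiency, assume \eqref{SC}. With $u^3\equiv 0$, equation \eqref{KER5} reads $\mu u^1_x=i\lambda\gamma v_x$, and integrating with $u^1(0)=v(0)=0$ gives $u^1=\tfrac{i\lambda\gamma}{\mu}v$, while \eqref{KER3} and \eqref{KER4} combine into $\big(\lambda^2-\tfrac{\mu}{\xi\varepsilon_3}\big)u^1=0$. If $u^1\equiv 0$ then $v_x\equiv 0$, hence $v\equiv 0$ and $U=0$; otherwise $\lambda^2=\tfrac{\mu}{\xi\varepsilon_3}$ and \eqref{KER2} becomes
\[
\alpha v_{xx}+\frac{\mu\rho}{\xi\varepsilon_3}\,v=0\quad\text{in }(0,L),\qquad v(0)=0,\quad v_x(L)=0,
\]
the condition at $x=L$ coming from the domain relation $\alpha v_x(L)+\gamma u^3(L)=0$. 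This Dirichlet--Neumann problem has a nontrivial solution if and only if $\tfrac{\mu\rho}{\xi\varepsilon_3\alpha}=\tfrac{(2n+1)^2\pi^2}{4L^2}$ for some $n$, which \eqref{SC} excludes; so $v\equiv 0$, then $u^1=0$, $z=i\lambda v=0$, and the compatibility relation gives $u^2=0$. Thus $U=0$ and $(e^{t\mathcal{A}_{0,0,c}})_{t\geq0}$ is strongly stable.

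For the necessity, suppose \eqref{SC} fails, say $\tfrac{\mu\rho}{\xi\varepsilon_3\alpha}=\tfrac{(2n+1)^2\pi^2}{4L^2}$ for some $n\in\mathbb{N}$. I would run the reduction backwards: with $\lambda=\sqrt{\mu/(\xi\varepsilon_3)}$ and $\omega=\tfrac{(2n+1)\pi}{2L}$, take $v(x)=\sin(\omega x)$ --- a nontrivial solution of the Dirichlet--Neumann problem above --- and set $u^3=0$, $z=i\lambda v$, $u^1=\tfrac{i\lambda\gamma}{\mu}v$, $u^2=i\lambda u^1$. Since $\lambda^2=\tfrac{\mu}{\xi\varepsilon_3}$ and $\alpha\omega^2=\lambda^2\rho$, a direct substitution shows that $U=(v,z,u^1,u^2,u^3)\neq 0$ solves \eqref{KER1}--\eqref{KER5}; provided $U\in D(\mathcal{A}_{0,0,c})$, this yields $i\lambda\in\sigma(\mathcal{A}_{0,0,c})\cap i\mathbb{R}^{\ast}$, the orbit $e^{i\lambda t}U$ has constant energy, and strong stability fails.

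The computational core --- the dissipation step, the reduction to the Sturm--Liouville problem and the identification of its spectrum --- is routine. The step I expect to be the real obstacle is the necessity direction: one must verify that the explicit $U$ actually belongs to $D(\mathcal{A}_{0,0,c})\subset\mathcal{H}$, i.e.\ that the endpoint conditions on $v$, $u^1$, $u^2$ together with the compatibility constraint $\xi u^2_x-u^3+\tfrac{\gamma}{\varepsilon_3}v_x=0$ hold simultaneously. It is precisely the resonance $\omega L=\tfrac{(2n+1)\pi}{2}$ --- the failure of \eqref{SC} --- that is meant to make these conditions compatible, and carrying this verification through carefully is where the argument has to be pinned down.
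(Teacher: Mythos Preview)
Your approach mirrors the paper's: the Arendt--Batty reduction, the dissipation identity forcing $u^3\equiv 0$, the algebraic step $(\lambda^2-\tfrac{\mu}{\xi\varepsilon_3})u^1=0$, and the reduction of \eqref{KER2} to the Dirichlet--Neumann problem $\alpha v_{xx}+\lambda^2\rho\,v=0$, $v(0)=v_x(L)=0$. The sufficiency direction is correct and essentially identical to the paper's argument.

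For the necessity direction you are right to single out the domain verification as the obstacle --- and here the argument, as written, does not close. The domain $D(\mathcal{A}_{0,0,c})$ requires $u^1,u^2\in H_0^1(0,L)$ (and already $\mathcal{H}$ imposes $u^2(0)=u^2(L)=0$). With $v(x)=\sin(\omega x)$ and $\omega L=(2n+1)\pi/2$ one has $v(L)=\sin\!\big((2n+1)\pi/2\big)=\pm 1$, hence
\[
u^1(L)=\frac{i\lambda\gamma}{\mu}\,v(L)\neq 0,\qquad u^2(L)=i\lambda\,u^1(L)=-\frac{\gamma}{\xi\varepsilon_3}\,v(L)\neq 0.
\]
Thus the candidate $U$ lies neither in $D(\mathcal{A}_{0,0,c})$ nor even in $\mathcal{H}$, and the exhibited vector is not an eigenvector. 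The resonance $\omega L=(2n+1)\pi/2$ forces $v_x(L)=0$ (the cosine vanishes) but \emph{not} $v(L)=0$ (the sine does not), so --- contrary to what you anticipate --- it does not reconcile the endpoint constraints on $u^1$ and $u^2$. The paper's own proof presents exactly the same eigenvector and checks only the condition $v_x(L)=0$ coming from $\alpha v_x(L)+\gamma u^3(L)=0$; it never verifies $u^1(L)=u^2(L)=0$, so it shares this gap. Your instinct that this is precisely where the argument must be pinned down is correct.
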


\begin{proof}
We suppose by contradiction that there exists $\la\in \mathbb{R}^{\ast}$ and 
$U\in D(\mathcal{A}_{0,0,c})\backslash \{0\}$ such that 
\begin{equation}  \label{Th2-Ker0}
\mathcal{A}_{0,0,c}U=i\la U.
\end{equation}
A straightforward computation gives: 
\begin{equation*}
0=\Re\left<i\la U,U\right>_{\mathcal{H}}=\Re\left<\mathcal{A}%
_{0,0,c}U,U\right>_{\mathcal{H}}=-c\varepsilon_3\int_0^L\abs{u^3}^2dx.
\end{equation*}
Consequently, we deduce that 
\begin{equation}  \label{TH2-EQ1}
u^3=0.
\end{equation}
Detailing \eqref{Th2-Ker0} and using \eqref{TH2-EQ1}%
\begin{eqnarray}
z&=&i\la v,  \label{Th2-KER1} \\[0.1in]
\la^2\rho v+\alpha v_{xx}&=&0,  \label{Th2-KER2} \\[0.1in]
u^2&=&i\la u^1,  \label{Th2-KER3} \\[0.1in]
-\frac{\mu}{\xi \varepsilon_3}u^1&=&i\la u^2,  \label{Th2-KER4} \\[0.1in]
-\frac{\mu}{\xi \varepsilon_3}u^1_x+\frac{\gamma}{\varepsilon_3}i\la v_x&=&0.
\label{Th2-KER5}
\end{eqnarray}
Inserting \eqref{Th2-KER3} in \eqref{Th2-KER4}, we get 
\begin{equation}  \label{Th2-KER6}
\left(\la^2-\frac{\mu}{\xi\varepsilon_3}\right)u^1=0.
\end{equation}
We distinguish two cases:\newline
\textbf{Case 1}: If $\la^2\neq \frac{\mu}{\xi \varepsilon_3}$, it follows
that $u^1=0$. Using the fact that $u^1=0$ in \eqref{Th2-KER3}, we obtain $%
u^2=0$. Using the fact that $\la \neq 0$ and $u^1=0$ and $v(0)=0$ in %
\eqref{Th2-KER5}, we get $v=0$ then $z=0$. Thus, $U=0$ and consequently $%
\mathcal{A}$ has no pure imaginary eigenvalues. \newline
\textbf{Case 2:} If $\la^2= \frac{\mu}{\xi \varepsilon_3}$. From, %
\eqref{Th2-KER5} and the fact that $v(0)=u^1(0)$, we get 
\begin{equation}  \label{Th2-Ker7}
u^1=i\la\frac{\gamma}{\mu}v.
\end{equation}
Using \eqref{TH2-EQ1}, the compatibility condition \eqref{compatibility} and
the facts that $v(0)=u^2(0)=0$, we get 
\begin{equation}  \label{Th2-ker8}
u^2=-\frac{\gamma}{\varepsilon_3\xi}v.
\end{equation}
The general solution of \eqref{Th2-KER2} with $v(0)=0$, is given by 
\begin{equation}  \label{Th2-Ker9}
v(x)=B \sin\left(\lambda \sqrt{\frac{\rho}{\alpha}}x\right).
\end{equation}
Using the fact that $u^3=0$ and the boundary condition $\alpha v_x(L)+\gamma
u^3(L)=0$, we get $v_x(L)=0$. Using $v_x(L)=0$ in \eqref{Th2-Ker9}, we
obtain $B_n\la \sqrt{\frac{\rho}{\alpha}}\cos\left(\la \sqrt{\frac{\rho}{%
\alpha}}L\right)=0$. If, 
\begin{equation}  \label{Th2-Ker10}
\cos\left(\la \sqrt{\frac{\rho}{\alpha}}L\right)=0,
\end{equation}
then, 
\begin{equation}  \label{1Th2-Ker10}
\la=\frac{(2n+1)\pi}{2L}\sqrt{\frac{\alpha}{\rho}}.
\end{equation}
Using the fact that $\la^2=\frac{\mu}{\xi\varepsilon_3}$ in %
\eqref{1Th2-Ker10}, we get 
\begin{equation}  \label{Th2-Ker11}
\frac{\mu\rho}{\xi\varepsilon_3\alpha}=\frac{(2n+1)^2\pi^2}{4L^2}.
\end{equation}
This contradicts \eqref{SC}, consequently hypothesis \eqref{Th2-Ker10} is
not true, and so $v=0$, then from \eqref{Th2-KER1}, \eqref{Th2-Ker7} and %
\eqref{Th2-ker8}, we get $u^1=u^2=z=0$, which yields to $U=0$. Consequently,
if \eqref{SC} holds, then $i\la$ is not an eigenvalue of $\mathcal{A}$. Thus 
\begin{equation*}
\ker\left(i\la I-\mathcal{A}_{0,0,c}\right)=\{0\}.
\end{equation*}
On the other hand, if condition \eqref{SC} is not true (i.e;, if %
\eqref{Th2-Ker11} holds), then $i\la$ (where $\la$ is given in %
\eqref{1Th2-Ker10}) is an eigenvalue of $\mathcal{A}_{0,0,c}$ with the
corresponding eigenvector 
\begin{equation*}
U=\left(v,i\la v,i\la \frac{\gamma}{\mu}v,-\frac{\gamma}{\varepsilon_3\xi}%
v,0\right),
\end{equation*}
such that $v$ is given in \eqref{Th2-Ker9}. The proof is thus complete.
\end{proof}


\section{The stretching of the centreline of the beam in $x-$direction and
electrical field component in ($x$ and $z$)$-$direction are damped "$%
(a,b,c)\neq (0,0,0)$"}

\noindent The aim of this part is to prove the exponential stability of
Lorenz system \eqref{Lorenz} The stretching of the centreline of the beam in 
$x-$direction and electrical field component in ($x$ and $z$)$-$direction
are damped (i.e. $(a;b,c)\neq (0,0,0)$). The main result of this pat is the
following theorem.

\begin{theoreme}
\label{THEOREM1-EXP} The $C_0-$semigroup of contractions $(e^{t\mathcal{A}%
_{a,b,c}})_{t\geq 0}$ is exponentially stable; i.e., there exist constants $%
M\geq 1$ and $\epsilon>0$ independent of $U_0$ such that 
\begin{equation*}
\|e^{t\mathcal{A}_{a,b,c}}U_0\|_{\mathcal{H}}\leq M e^{-\epsilon t}\|U_0\|_{%
\mathcal{H}}.
\end{equation*}
\end{theoreme}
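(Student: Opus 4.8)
To prove exponential stability of the $C_0$-semigroup $(e^{t\mathcal{A}_{a,b,c}})_{t\geq 0}$ with $(a,b,c)\neq(0,0,0)$ (in fact here all three dampings are present), I would invoke the classical frequency-domain criterion of Prüss and Huang-Gearhart: since $(e^{t\mathcal{A}_{a,b,c}})_{t\geq 0}$ is a contraction semigroup and we already know $i\mathbb{R}\cap\sigma(\mathcal{A}_{a,b,c})=\emptyset$ (this follows from Case~1 of Theorem~\ref{THM-ST-ST1}), it suffices to establish the resolvent bound
\begin{equation*}
\sup_{\lambda\in\mathbb{R}}\big\|(i\lambda I-\mathcal{A}_{a,b,c})^{-1}\big\|_{\mathcal{L}(\mathcal{H})}<\infty.
\end{equation*}
As usual this is argued by contradiction: suppose there exist sequences $\lambda_n\in\mathbb{R}$ and $U_n=(v_n,z_n,u^1_n,u^2_n,u^3_n)\in D(\mathcal{A}_{a,b,c})$ with $\|U_n\|_{\mathcal{H}}=1$ and $(i\lambda_n I-\mathcal{A}_{a,b,c})U_n=:F_n\to 0$ in $\mathcal{H}$. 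One first checks $|\lambda_n|$ stays bounded away from $0$ and $\infty$ — the lower bound from $0\in\rho(\mathcal{A}_{a,b,c})$, and boundedness of $|\lambda_n|$ typically from the structure of the equations (if $|\lambda_n|\to\infty$ one rescales and derives a contradiction). Then it remains to show $\|U_n\|_{\mathcal{H}}\to 0$, contradicting $\|U_n\|_{\mathcal{H}}=1$.

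**Key steps.** Writing out $(i\lambda_n I-\mathcal{A}_{a,b,c})U_n=F_n$ componentwise gives the perturbed versions of \eqref{KER1}--\eqref{KER5}, with $F_n=(f^1_n,\dots,f^5_n)\to 0$ on the right-hand sides. The first gain comes for free from dissipativity:
\begin{equation*}
\Re\langle(i\lambda_n-\mathcal{A}_{a,b,c})U_n,U_n\rangle_{\mathcal{H}}=a\int_0^L|z_n|^2\,dx+b\xi\varepsilon_3\int_0^L|u^2_n|^2\,dx+c\varepsilon_3\int_0^L|u^3_n|^2\,dx=\Re\langle F_n,U_n\rangle_{\mathcal{H}}\to 0,
\end{equation*}
so $z_n\to 0$, $u^2_n\to 0$, $u^3_n\to 0$ in $L^2(0,L)$ (using $a,b,c>0$). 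It then remains to recover the ``conservative'' components $v_{n,x}$ and $u^1_n$. From the first equation $z_n=i\lambda_n v_n - f^1_n$ together with $z_n\to 0$ and $|\lambda_n|$ bounded below, one gets control of $\lambda_n v_n$; feeding this and $u^3_n\to 0$ into the second (momentum) equation $\lambda_n^2\rho v_n+\alpha v_{n,xx}+\gamma u^3_{n,x}-a z_n= -\rho\,(\text{l.o.t.})$, tested against $v_n$ after integration by parts (using the boundary conditions $v_n(0)=0$, $\alpha v_{n,x}(L)+\gamma u^3_n(L)=0$), yields $\alpha\|v_{n,x}\|^2\to$ terms that are controlled — here one must be careful with the boundary term $\alpha v_{n,x}(L)\overline{v_n(L)}=-\gamma u^3_n(L)\overline{v_n(L)}$ and estimate $u^3_n(L)$ via an $H^1$ trace bound, which requires first bounding $\|u^3_{n,x}\|$ from the compatibility relation \eqref{compatibility} $\xi u^2_{n,x}-u^3_n+\frac{\gamma}{\varepsilon_3}v_{n,x}=0$ and equation \eqref{KER3}. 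Finally $u^1_n\to 0$ follows from \eqref{KER4}, $-\frac{\mu}{\xi\varepsilon_3}u^1_n-bu^2_n=i\lambda_n u^2_n-f^4_n$, once $u^2_n\to 0$ and $\lambda_n u^2_n$ is controlled — so the order of recovery is $z_n,u^2_n,u^3_n \Rightarrow v_{n,x} \Rightarrow u^1_n$.

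**Main obstacle.** The delicate point is propagating smallness through the coupling between the elastic (wave in $v$) block and the electromagnetic block, specifically controlling $\lambda_n u^2_n$ and $\lambda_n u^1_n$ (not merely $u^1_n,u^2_n$) so that the boundary traces and the cross terms $\gamma u^3_{n,x}$, $\frac{\gamma}{\varepsilon_3}i\lambda_n v_{n,x}$ can be absorbed; this is exactly where the compatibility condition \eqref{compatibility} and the coercivity of the bilinear form $b$ (Lemma before the definition of $\mathcal{A}$) do the real work, since $\|v_{n,x}\|$ and $\|u^2_{n,x}\|$ are coupled through it. A clean way to organize the estimates is to multiply \eqref{KER5}-type equation by $\overline{u^1_n}$, integrate by parts, and use \eqref{KER3} to replace $u^1_{n,x}$-type quantities, thereby converting the magnetic dissipation information into a bound on $\|u^1_n\|$; and to multiply the $v_n$-equation by a suitable multiplier (e.g. $q(x)\overline{v_{n,x}}$ with $q(0)=0$, $q(L)=1$, Rellich-type) to extract $\|v_{n,x}\|^2$ together with the boundary term. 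Once every component of $U_n$ is shown to tend to $0$ in its respective norm, we contradict $\|U_n\|_{\mathcal{H}}=1$, the resolvent is uniformly bounded on the imaginary axis, and the Prüss/Huang-Gearhart theorem gives the claimed exponential decay with constants $M\geq 1$, $\epsilon>0$.
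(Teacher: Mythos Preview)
Your overall strategy via the Huang--Pr\"uss/Gearhart criterion is exactly the paper's, and the order of recovery ($z,u^2,u^3$ from dissipation, then $v_x$, then $u^1$) matches. Two points are worth flagging.

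First, the paper works by a \emph{direct} resolvent estimate (showing $\|U\|_{\mathcal{H}}\leq K\|F\|_{\mathcal{H}}$ for all $|\lambda|\geq 1$) rather than by contradiction; this is equivalent but spares you the discussion of whether $|\lambda_n|$ stays bounded. More importantly, your boundary-trace and Rellich-multiplier maneuvers for $v_x$ are unnecessary: when you multiply the $z$-equation \eqref{EQ2-EXP1} by $\rho\overline{v}$ and integrate by parts, the boundary contribution at $x=L$ is exactly $-(\alpha v_x(L)+\gamma u^3(L))\,\overline{v(L)}=0$ by the domain condition on $D(\mathcal{A})$. The estimate $\alpha\|v_x\|^2\leq \rho\|z\|^2+\gamma\|u^3\|\,\|v_x\|+\text{(lower order)}$ then follows with no trace control and no multiplier.

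Second, there is a genuine gap in your recovery of $u^1$. You correctly identify that using \eqref{KER4} alone requires control of $\lambda u^2$, which the dissipation does not provide; but your proposed cure --- multiplying the \eqref{KER5}-type equation by $\overline{u^1}$ --- does not produce $\|u^1\|^2$ either, since $\Re\int_0^L u^1_x\overline{u^1}\,dx=0$ for $u^1\in H^1_0(0,L)$. The paper's device is different and clean: multiply the perturbed \eqref{EQ4-EXP1} by $\overline{u^1}$ \emph{and} the perturbed \eqref{EQ3-EXP1} by $\overline{u^2}$, then add and take the real part. The dangerous terms $i\lambda\int u^2\overline{u^1}\,dx$ and $i\lambda\int u^1\overline{u^2}\,dx$ sum to a purely imaginary quantity and drop out, leaving
\[
\frac{\mu}{\xi\varepsilon_3}\int_0^L|u^1|^2\,dx=\int_0^L|u^2|^2\,dx+\Re\int_0^L u^3\,\overline{u^2_x}\,dx+\text{(lower order)},
\]
and the remaining cross term is handled via the compatibility relation \eqref{compatibility}, $\xi u^2_x=u^3-\tfrac{\gamma}{\varepsilon_3}v_x$, which reduces it to the already-controlled quantities $\|u^3\|^2$ and $\|v_x\|^2$. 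Once you insert this symmetric combination, the proof closes with no need to estimate $\lambda u^2$ or $\lambda u^1$ at any stage.
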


\noindent According to Huang and Pr\"uss \cite{Huang01,pruss01}, we have to
check if the following conditions hold: 
\begin{equation}  \label{Condition1-EXP}
i\mathbb{R}\subset \rho(\mathcal{A}_{a,b,c})  \tag{${\rm H1}$}
\end{equation}
and 
\begin{equation}  \label{Condition2-EXP}
\sup_{\la \in \mathbb{R}} \|(i\la I-\mathcal{A}_{a,b,c})^{-1}\|_{\mathcal{L}(%
\mathcal{H})}=O(1).  \tag{${\rm H2}$}
\end{equation}
Condition \eqref{Condition1-EXP} is already proved in Theorem \ref%
{THM-ST-ST1}. The next proposition is a technical result to be used in the
proof of \eqref{Condition2-EXP} given below.

\begin{pro}
\label{Pro-EXP1} Let $\left(\la,U:=(v,z,u^1,u^2,u^3)\right)\in \mathbb{R}%
^{\ast}\times D(\mathcal{A}_{a,b,c})$, with $\abs{\la}\geq 1$, such that 
\begin{equation}  \label{EXPO-1}
\left(i\la I-\mathcal{A}_{a,b,c}\right)U=f:=(f^1,f^2,f^3,f^4,f^5)\in 
\mathcal{H},
\end{equation}
detailed as 
\begin{eqnarray}
i\la v-z&=&f^1,  \label{EQ1-EXP1} \\
i\la z-\frac{\alpha}{\rho}v_{xx}-\frac{\gamma}{\rho}u^3_x+\frac{a}{\rho}%
z&=&f^2,  \label{EQ2-EXP1} \\
i\la u^1-u^2+u^3_x&=&f^3,  \label{EQ3-EXP1} \\
i\la u^2+\frac{\mu}{\xi \varepsilon_3}u^1+bu^2&=&f^4,  \label{EQ4-EXP1} \\
i\la u^3+\frac{\mu}{\varepsilon_3}u^1_x-\frac{\gamma}{\varepsilon_3}%
z_x+cu^3&=&f^5.  \label{EQ5-EXP1}
\end{eqnarray}
Then, we have the following inequality 
\begin{equation}  \label{EQ6}
\|U\|_{\mathcal{H}}\leq K\|F\|_{\mathcal{H}}.
\end{equation}
\end{pro}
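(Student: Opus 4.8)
The plan is to establish the uniform resolvent bound \eqref{Condition2-EXP} by the classical frequency-domain energy method: starting from the resolvent system \eqref{EQ1-EXP1}--\eqref{EQ5-EXP1}, I first use the dissipation to control the damped components $z,u^2,u^3$, then recover the conservative components $v_x$ and $u^1$ by testing the equations against well-chosen multipliers, and finally sum the resulting estimates. Throughout, $C$ denotes a generic positive constant depending only on the physical parameters and $L$, which may change from line to line, and I freely use the Cauchy--Schwarz, Young ($pq\le\epsilon p^2+C_\epsilon q^2$) and Poincar\'e (for $v$ and $f^1$, both in $H^1_L(0,L)$) inequalities, together with the elementary bound $\|f^j\|_{L^2(0,L)}\le C\|f\|_{\mathcal H}$ for $j=1,\dots,5$. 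The first step: take the $\mathcal H$-inner product of \eqref{EXPO-1} with $U$; since $\Re\langle i\la U,U\rangle_{\mathcal H}=0$ and, by the dissipativity relation $\Re\langle\mathcal A_{a,b,c}U,U\rangle_{\mathcal H}=-a\|z\|_{L^2}^2-b\xi\varepsilon_3\|u^2\|_{L^2}^2-c\varepsilon_3\|u^3\|_{L^2}^2$ (cf. Lemma~\ref{Energy}), this yields $a\|z\|_{L^2}^2+b\xi\varepsilon_3\|u^2\|_{L^2}^2+c\varepsilon_3\|u^3\|_{L^2}^2\le\|f\|_{\mathcal H}\|U\|_{\mathcal H}$, hence $\|z\|_{L^2}^2+\|u^2\|_{L^2}^2+\|u^3\|_{L^2}^2\le C\|f\|_{\mathcal H}\|U\|_{\mathcal H}$ since $a,b,c>0$.

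To recover $\|v_x\|_{L^2}$, I multiply \eqref{EQ2-EXP1} by $\rho\overline v$, integrate over $(0,L)$, and integrate the terms $v_{xx}\overline v$ and $u^3_x\overline v$ by parts. The boundary contributions at $x=0$ vanish since $v(0)=0$, while the two contributions at $x=L$ cancel one another thanks to the boundary relation $\alpha v_x(L)+\gamma u^3(L)=0$ built into $D(\mathcal A)$. Using \eqref{EQ1-EXP1} to substitute $i\la v=z+f^1$ transforms $i\la\rho\int_0^L z\overline v\,dx$ into $-\rho\|z\|_{L^2}^2-\rho\int_0^L z\overline{f^1}\,dx$, and one is left with an identity of the form $\alpha\|v_x\|_{L^2}^2=\rho\|z\|_{L^2}^2-\gamma\int_0^L u^3\overline{v_x}\,dx+R$, where $R$ collects the remaining terms. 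Bounding $\gamma\int_0^L u^3\overline{v_x}\,dx$ and the term $a\int_0^L z\overline v\,dx$ appearing in $R$ by Young's and Poincar\'e's inequalities (the residual $L^2$-norms then being controlled by Step~1) and the $f$-dependent terms by $C\|f\|_{\mathcal H}\|U\|_{\mathcal H}$, one concludes $\|v_x\|_{L^2}^2\le C\|f\|_{\mathcal H}\|U\|_{\mathcal H}$.

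To recover $\|u^1\|_{L^2}$, I multiply \eqref{EQ4-EXP1} by $\overline{u^1}$ and integrate; the only term that is not manifestly of lower order is $i\la\int_0^L u^2\overline{u^1}\,dx$. I rewrite it using \eqref{EQ3-EXP1}, which gives $\overline{-i\la u^1}=-\overline{u^2}+\overline{u^3_x}-\overline{f^3}$, integrate the resulting $\int_0^L u^2\overline{u^3_x}\,dx$ by parts (the boundary terms vanish because $u^2(0)=u^2(L)=0$), and then use the compatibility condition \eqref{compatibility} in the form $u^2_x=\tfrac1\xi u^3-\tfrac{\gamma}{\xi\varepsilon_3}v_x$ to eliminate $u^2_x$ --- this is precisely what removes the factor $|\la|$. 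The outcome is $\tfrac{\mu}{\xi\varepsilon_3}\|u^1\|_{L^2}^2\le C\big(\|u^2\|_{L^2}^2+\|u^3\|_{L^2}^2+\|v_x\|_{L^2}^2\big)+\epsilon\|u^1\|_{L^2}^2+C\|f\|_{\mathcal H}\|U\|_{\mathcal H}$, so by Steps~1--2, $\|u^1\|_{L^2}^2\le C\|f\|_{\mathcal H}\|U\|_{\mathcal H}$. Since $\|U\|_{\mathcal H}^2=\int_0^L\big(\alpha|v_x|^2+\rho|z|^2+\mu|u^1|^2+\xi\varepsilon_3|u^2|^2+\varepsilon_3|u^3|^2\big)dx$, adding the three estimates gives $\|U\|_{\mathcal H}^2\le C\|f\|_{\mathcal H}\|U\|_{\mathcal H}$, i.e. the bound \eqref{EQ6} with $K=C$.

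The step I expect to be the main obstacle is the control of $\|u^1\|_{L^2}$: testing \eqref{EQ4-EXP1} directly against $\overline{u^1}$ produces a contribution of size $|\la|\,\|u^2\|_{L^2}\|u^1\|_{L^2}$, which is not uniformly bounded in $\la$, and the crux is to trade the factor $\la$ for a spatial derivative through \eqref{EQ3-EXP1} and then remove the emerging $u^2_x$ via the compatibility relation \eqref{compatibility}. A secondary technical point is the exact cancellation of the $x=L$ boundary terms in the $v_x$-estimate, which hinges on the boundary condition $\alpha v_x(L)+\gamma u^3(L)=0$; the restriction $|\la|\ge1$ is needed only to absorb lower-order contributions and to dovetail with the low-frequency regime, which is itself covered by $0\in\rho(\mathcal A_{a,b,c})$ together with continuity of the resolvent on compact subsets of $i\mathbb R$.
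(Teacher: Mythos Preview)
Your proposal is correct and follows essentially the same route as the paper's proof: the paper splits the argument into three lemmas (Lemma~\ref{Lemm1} for the dissipation estimate on $z,u^2,u^3$, Lemma~\ref{Lemm2} multiplying \eqref{EQ2-EXP1} by $\rho\overline v$ to control $\|v_x\|$, and Lemma~\ref{LEMM3} combining \eqref{EQ4-EXP1}$\times\overline{u^1}$ with \eqref{EQ3-EXP1}$\times\overline{u^2}$ and the compatibility relation \eqref{compatibility} to control $\|u^1\|$), and then sums. Your substitution of $i\la\overline{u^1}$ via \eqref{EQ3-EXP1} is equivalent to the paper's device of adding the two tested equations and taking real parts, and you have correctly identified the two key mechanisms---the boundary cancellation at $x=L$ via $\alpha v_x(L)+\gamma u^3(L)=0$ and the removal of the $\la$-factor through \eqref{compatibility}.
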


\noindent Here and below we denote by $K_j$ a positive constant number
independent of $\la$. For the proof of Proposition \ref{Pro-EXP1}, we need
the following lemmas. 

\begin{lemma}
\label{Lemm1} The solution $(v,z,u^1,u^2,u^3)\in D(\mathcal{A}_{a,b,c})$ of
equation \eqref{EXPO-1}satisfies the following estimates: 
\begin{equation}  \label{Lemm1-EQ1}
\int_0^L\abs{z}^2dx\leq K_1\|U\|_{\mathcal{H}}\|F\|_{\mathcal{H}}\qquad 
\text{where}\quad K_1=\frac{1}{a},
\end{equation}
\begin{equation}  \label{Lemm1-EQ2}
\int_0^L\abs{u^2}dx\leq K_2\|U\|_{\mathcal{H}}\|F\|_{\mathcal{H}}\qquad 
\text{where}\quad K_2=\frac{1}{b\xi \varepsilon_3},
\end{equation}
\begin{equation}  \label{Lemm1-EQ3}
\int_0^L\abs{u^3}^2dx\leq K_3\|U\|_{\mathcal{H}}\|F\|_{\mathcal{H}}\qquad 
\text{where}\quad K_3=\frac{1}{c\varepsilon_3}.
\end{equation}
\end{lemma}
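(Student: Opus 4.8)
The plan is to obtain all three estimates simultaneously from a single energy identity produced by testing the resolvent equation \eqref{EXPO-1} against $U$ in the Hilbert space $\mathcal{H}$.

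First I would take the $\mathcal{H}$-inner product of \eqref{EXPO-1} with $U$, which gives
\begin{equation*}
i\la\left<U,U\right>_{\mathcal{H}} - \left<\mathcal{A}_{a,b,c}U,U\right>_{\mathcal{H}} = \left<F,U\right>_{\mathcal{H}}.
\end{equation*}
Since $\left<U,U\right>_{\mathcal{H}}=\|U\|_{\mathcal{H}}^2$ is real, taking real parts annihilates the first term. For the second term I would invoke the dissipativity identity for $\mathcal{A}_{a,b,c}$ — the same one used in the proof of Theorem \ref{THM-ST-ST1}, obtained from the boundary-term bookkeeping of Lemma \ref{Energy} applied to the stationary profile $U\in D(\mathcal{A}_{a,b,c})$:
\begin{equation*}
\Re\left<\mathcal{A}_{a,b,c}U,U\right>_{\mathcal{H}} = -a\int_0^L\abs{z}^2dx - b\xi\varepsilon_3\int_0^L\abs{u^2}^2dx - c\varepsilon_3\int_0^L\abs{u^3}^2dx.
\end{equation*}
Combining these two facts yields the master identity
\begin{equation*}
a\int_0^L\abs{z}^2dx + b\xi\varepsilon_3\int_0^L\abs{u^2}^2dx + c\varepsilon_3\int_0^L\abs{u^3}^2dx = \Re\left<F,U\right>_{\mathcal{H}}.
\end{equation*}

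Next I would bound the right-hand side by Cauchy--Schwarz, $\Re\left<F,U\right>_{\mathcal{H}}\le\|F\|_{\mathcal{H}}\|U\|_{\mathcal{H}}$. Because $a,b,c>0$ throughout this section, each of the three terms on the left of the master identity is nonnegative, so discarding any two of them and dividing by the coefficient of the survivor gives
\begin{equation*}
\int_0^L\abs{z}^2dx\le\frac1a\|U\|_{\mathcal{H}}\|F\|_{\mathcal{H}},\qquad \int_0^L\abs{u^2}^2dx\le\frac1{b\xi\varepsilon_3}\|U\|_{\mathcal{H}}\|F\|_{\mathcal{H}},\qquad \int_0^L\abs{u^3}^2dx\le\frac1{c\varepsilon_3}\|U\|_{\mathcal{H}}\|F\|_{\mathcal{H}},
\end{equation*}
which are precisely \eqref{Lemm1-EQ1}, \eqref{Lemm1-EQ2} and \eqref{Lemm1-EQ3} with the announced constants $K_1=1/a$, $K_2=1/(b\xi\varepsilon_3)$, $K_3=1/(c\varepsilon_3)$.

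There is essentially no obstacle here; the only point deserving care is the justification of the dissipativity identity $\Re\left<\mathcal{A}_{a,b,c}U,U\right>_{\mathcal{H}}=-a\int_0^L\abs{z}^2-b\xi\varepsilon_3\int_0^L\abs{u^2}^2-c\varepsilon_3\int_0^L\abs{u^3}^2$. This is exactly the computation underlying the assertion that $\mathcal{A}_{a,b,c}$ generates a contraction semigroup, stated after \eqref{Evolution}: one expands $\left<\mathcal{A}_{a,b,c}U,U\right>_{\mathcal{H}}$ using \eqref{bilinear} (or the equivalent form \eqref{bilinear1}), integrates by parts, and uses the boundary conditions built into $D(\mathcal{A}_{a,b,c})$ (namely $v(0)=0$, $u^2(0)=u^2(L)=0$ and $\alpha v_x(L)+\gamma u^3(L)=0$) together with the compatibility relation \eqref{compatibility} to cancel all conservative cross terms, leaving only the three damping terms. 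Since that fact has already been recorded in the excerpt, it may simply be quoted rather than re-derived.
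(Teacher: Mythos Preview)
Your proposal is correct and matches the paper's proof essentially verbatim: the paper also takes the inner product of \eqref{EXPO-1} with $U$ in $\mathcal{H}$, records the dissipativity identity $-\Re\left<\mathcal{A}_{a,b,c}U,U\right>_{\mathcal{H}}=a\int_0^L\abs{z}^2+b\xi\varepsilon_3\int_0^L\abs{u^2}^2+c\varepsilon_3\int_0^L\abs{u^3}^2$, bounds the right-hand side by $\|U\|_{\mathcal{H}}\|F\|_{\mathcal{H}}$, and reads off the three estimates.
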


\begin{proof}
First, taking the inner product of \eqref{EXPO-1} with $U$ in $\mathcal{H}$,
we obtain 
\begin{equation}  \label{Lemm1-EQ4}
a\int_0^L\abs{z}^2+b\xi \varepsilon_3\int_0^L\abs{u^2}^2dx+c\varepsilon_3%
\int_0^L\abs{u^3}^2dx=\Re\left(\mathcal{A}_{a,b;c}U,U\right)\leq \|U\|_{%
\mathcal{H}}\|F\|_{\mathcal{H}}.
\end{equation}
Then, we obtain \eqref{Lemm1-EQ1}-\eqref{Lemm1-EQ3}. The proof has been
completed.
\end{proof}


\begin{lemma}
\label{Lemm2} The solution $(v,z,u^1,u^2,u^3)\in D(\mathcal{A}_{a,b,c})$ of
equation \eqref{EXPO-1}satisfies the following estimation: 
\begin{equation}  \label{Lemm2-EQ1}
\alpha\int_0^L\abs{v_x}^2dx\leq K_4\|U\|_{\mathcal{H}}\|F\|_{\mathcal{H}%
}\quad \text{where}\quad K_4=2\left(\rho K_1+2\sqrt{\frac{\rho}{\alpha}}c_p+%
\frac{ac_p^2}{\alpha}+\frac{\gamma^2}{\alpha}K_3\right).
\end{equation}
\end{lemma}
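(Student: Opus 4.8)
The plan is to run a standard multiplier argument on the second equation \eqref{EQ2-EXP1}, which (after the substitution $z\approx i\lambda v$) is the wave equation for $v$. First I would multiply \eqref{EQ2-EXP1} by $\rho\overline{v}$ and integrate over $(0,L)$; integrating the elliptic term by parts gives $-\alpha\int_0^L v_{xx}\overline{v}\,dx=\alpha\int_0^L|v_x|^2\,dx-\alpha[v_x\overline{v}]_0^L$, which isolates the quantity $\alpha\int_0^L|v_x|^2\,dx$ that we want to estimate. The boundary contribution at $x=0$ drops because $v(0)=0$, while the one at $x=L$, namely $-\alpha v_x(L)\overline{v}(L)$, is combined with the boundary term $-\gamma u^3(L)\overline{v}(L)$ arising from integrating $-\gamma\int_0^L u^3_x\overline{v}\,dx$ by parts; the domain condition $\alpha v_x(L)+\gamma u^3(L)=0$ from $D(\mathcal{A}_{a,b,c})$ makes the sum of these two vanish, so no boundary term survives.

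After this step the identity reads
\[
i\lambda\rho\int_0^L z\overline{v}\,dx+\alpha\int_0^L|v_x|^2\,dx+\gamma\int_0^L u^3\overline{v_x}\,dx+a\int_0^L z\overline{v}\,dx=\rho\int_0^L f^2\overline{v}\,dx.
\]
Next I would use \eqref{EQ1-EXP1} twice. Conjugating it gives $i\lambda\overline{v}=-\overline{z}-\overline{f^1}$, so $i\lambda\rho\int_0^L z\overline{v}\,dx=-\rho\int_0^L|z|^2\,dx-\rho\int_0^L z\overline{f^1}\,dx$; and writing $z=i\lambda v-f^1$ in the damping integral, the real part of its contribution to the right-hand side becomes $a\,\Re\int_0^L f^1\overline{v}\,dx$, since $\Re(i\lambda|v|^2)=0$. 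Solving for $\alpha\int_0^L|v_x|^2\,dx$ and taking real parts, we obtain it as $\rho\int_0^L|z|^2\,dx$ plus four terms, each a product of factors that are individually controlled either by $\|U\|_{\mathcal{H}}$, by $\|F\|_{\mathcal{H}}$, by the Poincar\'e constant $c_p$ applied to $v$ and to $f^1$ (both in $H^1_L(0,L)$), or by the dissipation estimates of Lemma \ref{Lemm1}.

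The rest is bookkeeping with Cauchy--Schwarz and Young's inequality. I would estimate $\rho\int_0^L|z|^2\,dx\le\rho K_1\|U\|_{\mathcal{H}}\|F\|_{\mathcal{H}}$ by \eqref{Lemm1-EQ1}; bound $\rho\Re\int_0^L z\overline{f^1}\,dx$ and $\rho\Re\int_0^L f^2\overline{v}\,dx$ using $\sqrt{\rho}\,\|z\|_{L^2}\le\|U\|_{\mathcal{H}}$, $\sqrt{\rho}\,\|f^2\|_{L^2}\le\|F\|_{\mathcal{H}}$, $\|v\|_{L^2}\le c_p\|v_x\|_{L^2}\le(c_p/\sqrt{\alpha})\|U\|_{\mathcal{H}}$ and $\|f^1\|_{L^2}\le c_p\|f^1_x\|_{L^2}\le(c_p/\sqrt{\alpha})\|F\|_{\mathcal{H}}$, which produces the two $\sqrt{\rho/\alpha}\,c_p$ contributions; bound the damping term $a\,\Re\int_0^L f^1\overline{v}\,dx\le a\|f^1\|_{L^2}\|v\|_{L^2}\le(ac_p^2/\alpha)\|U\|_{\mathcal{H}}\|F\|_{\mathcal{H}}$; and bound $\gamma\int_0^L u^3\overline{v_x}\,dx$ by Young's inequality in the form that generates a term $\tfrac{\alpha}{2}\int_0^L|v_x|^2\,dx$, which is absorbed into the left-hand side (this absorption is exactly what forces the factor $2$ in $K_4$), plus a multiple of $\int_0^L|u^3|^2\,dx$ controlled by \eqref{Lemm1-EQ3}. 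Collecting constants yields an inequality of the form \eqref{Lemm2-EQ1}.

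The only genuinely delicate point is the boundary-term cancellation at $x=L$: there is no $L^2$-type control on the pointwise values $v_x(L)$ or $u^3(L)$ individually, so one must use the coupled boundary condition $\alpha v_x(L)+\gamma u^3(L)=0$ to kill them together. Everything else is routine.
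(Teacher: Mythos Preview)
Your proposal is correct and follows essentially the same multiplier argument as the paper: multiply \eqref{EQ2-EXP1} by $\rho\overline{v}$, integrate by parts (with the boundary terms at $x=L$ cancelling via $\alpha v_x(L)+\gamma u^3(L)=0$), use \eqref{EQ1-EXP1} to convert $i\lambda\rho\int z\overline{v}$ into $-\rho\int|z|^2$ plus lower order, and then estimate the remaining terms with Cauchy--Schwarz, Poincar\'e and Young together with Lemma~\ref{Lemm1}. The only cosmetic difference is that the paper bounds the damping term $a\int_0^L z\overline{v}\,dx$ by Young's inequality (absorbing an extra $\tfrac{\alpha}{4}\int|v_x|^2$ into the left) instead of your substitution $z=i\lambda v-f^1$, so both absorptions in the paper contribute to the factor $2$; your route is slightly cleaner and yields the same (in fact a marginally smaller) constant.
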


\begin{proof}
Multiplying \eqref{EQ2-EXP1} by $\rho \overline{v}$, integrating by parts
over $(0,L)$, we get 
\begin{equation}  \label{Lemm2-EQ2}
i\la\rho \int_0^Lz\overline{v}dx+\alpha \int_0^L\abs{v_x}^2dx+\gamma%
\int_0^Lu^3\overline{v_x}dx+a \int_0^Lz\overline{v}dx=\rho \int_0^Lf^2%
\overline{v}dx.
\end{equation}
From \eqref{EQ1-EXP1}, we get 
\begin{equation*}
\alpha \int_0^L\abs{v_x}^2dx= \rho\int_0^L\abs{z}^2dx+\rho\int_0^Lz\overline{%
f^1}dx -\gamma\int_0^Lu^3\overline{v_x}dx-a \int_0^Lz\overline{v}dx+\rho
\int_0^Lf^2\overline{v}dx,
\end{equation*}
consequently, we obtain 
\begin{equation}  \label{Lemm2-EQ3}
\alpha\int_0^L\abs{v_x}^2dx\leq \rho \int_0^L\abs{z}^2dx+\rho\int_0^L\abs{z}%
\abs{f^1}dx +\rho\int_0^L\abs{f^2}\abs{v}dx+\gamma\int_0^L\abs{u^3}\abs{v_x}%
dx+a\int_0^L\abs{z}\abs{v}dx.
\end{equation}
Using the fact that $\sqrt{\rho}\|z\|\leq\|U\|_{\mathcal{H}}$, $\sqrt{\alpha}%
\|f_x^1\|\leq \|F\|_{\mathcal{H}}$, $\sqrt{\rho}\|f_2\|\leq \|F\|_{\mathcal{H%
}}$ and Poincar\'e inequality , we get 
\begin{equation}  \label{Lemm2-EQ4}
\left\{%
\begin{array}{l}
\displaystyle \rho\int_0^L\abs{z}\abs{f^1}dx\leq \rho c_p\|z\|\|f^1_x\|\leq 
\sqrt{\frac{\rho}{\alpha}}c_p\|U\|_{\mathcal{H}}\|F\|_{\mathcal{H}}, \\ 
\displaystyle \rho\int_0^L\abs{f^2}\abs{v}dx\leq \rho c_p\|f^2\|\|v_x\|\leq 
\sqrt{\frac{\rho}{\alpha}}c_p\|U\|_{\mathcal{H}}\|F\|_{\mathcal{H}}.%
\end{array}
\right.
\end{equation}
Applying Young inequality, Poincar\'e inequality and using \eqref{Lemm1-EQ1}
and \eqref{Lemm1-EQ3}, we get 
\begin{equation}  \label{Lemm2-EQ5}
a\int_0^L\abs{z}\abs{v}dx\leq \frac{a}{2r_1}\int_0^L\abs{z}^2dx+\frac{%
ar_1c_p^2}{2}\int_0^L\abs{v_x}^2dx\leq \frac{1}{2r_1}\|U\|_{\mathcal{H}%
}\|F\|_{\mathcal{H}}+\frac{ar_1c_p^2}{2}\int_0^L\abs{v_x}^2dx
\end{equation}
and 
\begin{equation}  \label{Lemm2-EQ6}
\gamma \int_0^L\abs{u^3}\abs{v_x}dx\leq \frac{\gamma^2}{2r_2}\int_0^L%
\abs{u^3}^2dx+\frac{r_2}{2}\int_0^L\abs{v_x}^2dx\leq \frac{\gamma^2}{2r_2}%
K_3\|U\|_{\mathcal{H}}\|F\|_{\mathcal{H}}+\frac{r_2}{2}\int_0^L\abs{v_x}^2dx.
\end{equation}
Inserting \eqref{Lemm2-EQ4}-\eqref{Lemm2-EQ6} in \eqref{Lemm2-EQ3} and using %
\eqref{Lemm1-EQ1}, we get 
\begin{equation}  \label{Lemm2-EQ7}
\left(\alpha-\frac{ar_1c_p^2}{2}-\frac{r_2}{2}\right)\int_0^L\abs{v_x}%
^2dx\leq \left(\rho K_1+2\sqrt{\frac{\rho}{\alpha}}c_p+\frac{1}{2r_1}+\frac{%
\gamma^2}{2r_2}K_3\right)\|U\|_{\mathcal{H}}\|F\|_{\mathcal{H}}.
\end{equation}
Taking $r_1=\frac{\alpha}{2ac_p^2}$ and $r_2=\frac{\alpha}{2}$ in %
\eqref{Lemm2-EQ7}, we get 
\begin{equation*}
\frac{\alpha}{2}\int_0^L\abs{v_x}^2dx\leq \left(\rho K_1+2\sqrt{\frac{\rho}{%
\alpha}}c_p+\frac{ac_p^2}{\alpha}+\frac{\gamma^2}{\alpha}K_3\right)\|U\|_{%
\mathcal{H}}\|F\|_{\mathcal{H}}.
\end{equation*}
Thus, we obtain \eqref{Lemm2-EQ1}. the proof has been completed.
\end{proof}


\begin{lemma}
\label{LEMM3} The solution $(v,z,u^1,u^2,u^3)\in D(\mathcal{A}_{a,b,c})$ of
equation \eqref{EXPO-1}satisfies the following estimation: 
\begin{equation}  \label{Lemm3-EQ1}
\mu\int_0^L\abs{u^1}^2dx\leq K_5\|U\|_{\mathcal{H}}\|F\|_{\mathcal{H}},
\end{equation}
where $\displaystyle{K_5=2\xi\varepsilon_3\left(\left(1+\frac{%
b^2\xi\varepsilon_3}{2\mu}\right)K_2+\left(\frac{1}{\xi}+\frac{\gamma^2}{%
2\xi^2\varepsilon_3^2}\right)K_3+\frac{1}{2}K_4+\frac{2}{\sqrt{%
\xi\varepsilon_3\mu}}\right)}$.
\end{lemma}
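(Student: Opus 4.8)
The plan is to extract the bound for $\mu\int_0^L\abs{u^1}^2dx$ from the fourth resolvent equation \eqref{EQ4-EXP1}, the only one in which $u^1$ appears undifferentiated. Multiplying \eqref{EQ4-EXP1} by $\xi\varepsilon_3\overline{u^1}$ and integrating over $(0,L)$ gives
\begin{equation*}
\mu\int_0^L\abs{u^1}^2dx=\xi\varepsilon_3\int_0^Lf^4\overline{u^1}dx-i\la\,\xi\varepsilon_3\int_0^Lu^2\overline{u^1}dx-b\,\xi\varepsilon_3\int_0^Lu^2\overline{u^1}dx .
\end{equation*}
The term $i\la\,\xi\varepsilon_3\int_0^Lu^2\overline{u^1}dx$ is the one to watch: estimating it directly would carry the factor $\la$ and be useless for \eqref{Condition2-EXP}. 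To remove $\la$, I would use the complex conjugate of \eqref{EQ3-EXP1} (here $\la\in\mathbb{R}$), namely $i\la\overline{u^1}=-\overline{f^3}-\overline{u^2}+\overline{u^3_x}$, and substitute, obtaining
\begin{equation*}
-i\la\,\xi\varepsilon_3\int_0^Lu^2\overline{u^1}dx=\xi\varepsilon_3\int_0^Lu^2\overline{f^3}dx+\xi\varepsilon_3\int_0^L\abs{u^2}^2dx-\xi\varepsilon_3\int_0^Lu^2\overline{u^3_x}dx .
\end{equation*}

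Next I would integrate the last integral by parts; the boundary terms vanish since $u^2(0)=u^2(L)=0$ in the definition of $\mathcal{H}$, so $-\xi\varepsilon_3\int_0^Lu^2\overline{u^3_x}dx=\xi\varepsilon_3\int_0^Lu^2_x\overline{u^3}dx$. Then the compatibility condition \eqref{compatibility}, in the form $\xi\varepsilon_3u^2_x=\varepsilon_3u^3-\gamma v_x$, turns this into $\varepsilon_3\int_0^L\abs{u^3}^2dx-\gamma\int_0^Lv_x\overline{u^3}dx$. After these substitutions $\mu\int_0^L\abs{u^1}^2dx$ is expressed as a sum of: two terms linear in $f^3,f^4$; the quantities $\int_0^L\abs{u^2}^2dx$ and $\int_0^L\abs{u^3}^2dx$; the cross-term $\gamma\int_0^Lv_x\overline{u^3}dx$; and $-b\,\xi\varepsilon_3\int_0^Lu^2\overline{u^1}dx$.

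It then remains to estimate each piece. The two terms in $f^3,f^4$ are handled by Cauchy--Schwarz together with $\|f^3\|\le\|F\|_{\mathcal{H}}/\sqrt{\mu}$, $\|f^4\|\le\|F\|_{\mathcal{H}}/\sqrt{\xi\varepsilon_3}$, $\|u^1\|\le\|U\|_{\mathcal{H}}/\sqrt{\mu}$ and $\|u^2\|\le\|U\|_{\mathcal{H}}/\sqrt{\xi\varepsilon_3}$; the integrals $\int_0^L\abs{u^2}^2dx$ and $\int_0^L\abs{u^3}^2dx$ are controlled by Lemma \ref{Lemm1}; and the cross-term by Young's inequality combined with \eqref{Lemm1-EQ3} and Lemma \ref{Lemm2}. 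Finally I would apply Young's inequality to $b\,\xi\varepsilon_3\int_0^L\abs{u^2}\abs{u^1}dx$ with the weight chosen so the resulting $\tfrac{\mu}{2}\int_0^L\abs{u^1}^2dx$ is absorbed into the left-hand side; multiplying through by $2$ and collecting all constants gives exactly the stated $K_5$. The main obstacle is precisely the substitution / integration-by-parts / compatibility chain: without replacing $i\la u^1$ through \eqref{EQ3-EXP1} and then transferring the derivative onto $u^3$ via the boundary conditions of $\mathcal{H}$ and \eqref{compatibility}, the power of $\la$ does not cancel and the $\la$-uniform estimate \eqref{Lemm3-EQ1} cannot be reached.
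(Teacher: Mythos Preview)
Your proposal is correct and follows essentially the same route as the paper. The only cosmetic difference is that the paper multiplies \eqref{EQ4-EXP1} by $\overline{u^1}$ and \eqref{EQ3-EXP1} by $\overline{u^2}$, adds, and takes the real part so that the two $i\la$ cross-terms cancel, whereas you achieve the same cancellation by substituting the conjugate of \eqref{EQ3-EXP1} into the $i\la$ term; after that, both arguments use \eqref{compatibility}, Young's inequality, and Lemmas \ref{Lemm1}--\ref{Lemm2} in exactly the same way to reach \eqref{Lemm3-EQ1}.
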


\begin{proof}
Multiplying \eqref{EQ4-EXP1} by $\overline{u^1}$ integrating over $(0,L)$,
we get 
\begin{equation}  \label{Lemm3-EQ2}
i\la\int_0^Lu^2\overline{u^1}dx+\frac{\mu}{\xi\varepsilon_3}\int_0^L\abs{u^1}%
^2dx+b\int_0^Lu^2\overline{u^1}dx=\int_0^Lf^4\overline{u^1}dx.
\end{equation}
Multiplying \eqref{EQ3-EXP1} by $\overline{u^2}$ integrating by parts over $%
(0,L)$, we get 
\begin{equation}  \label{Lemm3-EQ3}
i\la\int_0^Lu^1\overline{u^2}dx-\int_0^L\abs{u^2}^2dx-\int_0^Lu^3\overline{%
u^2_x}dx=\int_0^Lf^3\overline{u^2}dx.
\end{equation}
Adding \eqref{Lemm3-EQ2}-\eqref{Lemm3-EQ3} and taking the real part, we get 
\begin{equation}  \label{Lemm3-EQ4}
\begin{array}{l}
\displaystyle \frac{\mu}{\xi\varepsilon_3}\int_0^L\abs{u^1}^2dx=\int_0^L%
\abs{u^2}^2dx-\Re\left(b\int_0^Lu^2\overline{u^1}dx\right) \\ 
\displaystyle +\Re\left(\int_0^Lu^3\overline{u^2_x}dx\right)+\Re\left(%
\int_0^Lf^4\overline{u^1}dx\right)+\Re\left(\int_0^Lf^3\overline{u^2}%
dx\right).%
\end{array}%
\end{equation}
Using the fact that $\sqrt{\xi\varepsilon_3}\|f^4\|\leq \|F\|_{\mathcal{H}}$%
, $\sqrt{\mu}\|f^3\|\leq \|F\|_{\mathcal{H}}$, $\sqrt{\xi\varepsilon_3}%
\|u^2\|\leq \|U\|_{\mathcal{H}}$ and $\sqrt{\mu}\|u^1\|\leq \|U\|_{\mathcal{H%
}}$, we get 
\begin{equation}  \label{Lemm3-EQ5}
\left\{%
\begin{array}{l}
\displaystyle \left|\Re\left(\int_0^Lf^4\overline{u^1}dx\right)\right|\leq 
\frac{1}{\sqrt{\xi\varepsilon_3\mu}}\|U\|_{\mathcal{H}}\|F\|_{\mathcal{H}},
\\[0.1in] 
\displaystyle \left|\Re\left(\int_0^Lf^3\overline{u^2}dx\right)\right|\leq 
\frac{1}{\sqrt{\xi\varepsilon_3\mu}}\|U\|_{\mathcal{H}}\|F\|_{\mathcal{H}}.%
\end{array}
\right.
\end{equation}
Using Young inequality and \eqref{Lemm1-EQ2}, we get 
\begin{equation}  \label{Lemm3-EQ6}
\left|\Re\left(b\int_0^Lu^2\overline{u^1}dx\right)\right|\leq \frac{b^2}{2r_3%
}\int_0^L\abs{u^2}^2dx+\frac{r_3}{2}\int_0^L\abs{u^1}^2dx\leq \frac{b^2}{2r_3%
}K_2\|U\|_{\mathcal{H}}\|F\|_{\mathcal{H}}+\frac{r_3}{2}\int_0^L\abs{u^1}%
^2dx.
\end{equation}
Now, we give an estimation on $\displaystyle{\Re\left(\int_0^Lu^3\overline{%
u^2_x}dx\right)}$. Using compatibility condition \eqref{compatibility}, we
get 
\begin{equation*}
\Re\left(\int_0^Lu^3\overline{u^2_x}dx\right)=\frac{1}{\xi}\int_0^L\abs{u^3}%
^2dx-\frac{\gamma}{\xi\varepsilon_3}\int_0^Lu^3\overline{v_x}dx.
\end{equation*}
Applying Young inequality in the above estimation and using \eqref{Lemm1-EQ3}
and \eqref{Lemm2-EQ1}, we get 
\begin{equation}  \label{Lemm3-EQ7}
\begin{array}{lll}
\displaystyle \left|\Re\left(\int_0^Lu^3\overline{u^2_x}dx\right)\right| & 
\leq & \displaystyle \left(\frac{1}{\xi}+\frac{\gamma^2}{2\xi^2%
\varepsilon_3^2}\right)\int_0^L\abs{u^3}^2dx+\frac{1}{2}\int_0^L\abs{v_x}^2dx
\\[0.1in] 
& \leq & \displaystyle \left(\left(\frac{1}{\xi}+\frac{\gamma^2}{%
2\xi^2\varepsilon_3^2}\right)K_3+\frac{1}{2}K_4\right)\|U\|_{\mathcal{H}%
}\|F\|_{\mathcal{H}}.%
\end{array}%
\end{equation}
Inserting \eqref{Lemm3-EQ5}, \eqref{Lemm3-EQ6}, \eqref{Lemm3-EQ7} and using %
\eqref{Lemm1-EQ2} in \eqref{Lemm3-EQ4}, we get 
\begin{equation*}
\left(\frac{\mu}{\xi\varepsilon_3}-\frac{r_3}{2}\right)\int_0^L\abs{u^1}%
^2dx\leq \left(K_2+\frac{b^2}{2r_3}K_2+\left(\frac{1}{\xi}+\frac{\gamma^2}{%
2\xi^2\varepsilon_3^2}\right)K_3+\frac{1}{2}K_4+\frac{2}{\sqrt{%
\xi\varepsilon_3\mu}}\right)\|U\|_{\mathcal{H}}\|F\|_{\mathcal{H}}.
\end{equation*}
Taking $r_3=\frac{\mu}{\xi\varepsilon_3}$ in the above estimation, we get 
\begin{equation*}
\mu\int_0^L\abs{u^1}^2dx\leq 2\xi\varepsilon_3\left(\left(1+\frac{%
b^2\xi\varepsilon_3}{2\mu}\right)K_2+\left(\frac{1}{\xi}+\frac{\gamma^2}{%
2\xi^2\varepsilon_3^2}\right)K_3+\frac{1}{2}K_4+\frac{2}{\sqrt{%
\xi\varepsilon_3\mu}}\right)\|U\|_{\mathcal{H}}\|F\|_{\mathcal{H}}.
\end{equation*}
The proof has been completed.
\end{proof}


\noindent \textbf{Proof of Proposition \ref{Pro-EXP1}}. Adding %
\eqref{Lemm1-EQ1}, \eqref{Lemm1-EQ2}, \eqref{Lemm1-EQ3}, \eqref{Lemm2-EQ1}
and \eqref{Lemm3-EQ1}, we get 
\begin{equation*}
\|U\|^2_{\mathcal{H}}=\alpha\|v_x\|^2+\rho\|z\|^2+\mu\|u^1\|^2+\xi%
\varepsilon_3\|u^2\|^2+\varepsilon_3\|u^3\|^2\leq K\|U\|_{\mathcal{H}%
}\|F\|_{H}.
\end{equation*}
Thus, we obtain \eqref{EQ6} where $K=\rho
K_1+\xi\varepsilon_3K_2+\varepsilon_3K_3+K_4+K_5$. 

\noindent \textbf{Proof of Theorem \ref{THEOREM1-EXP}} For all $U\in D(%
\mathcal{A})$ according to Proposition \eqref{Pro-EXP1}, we get 
\begin{equation*}
\|U\|_{\mathcal{H}}\leq K\|(i\la I-\mathcal{A}_{a,b,c})U\|_{\mathcal{H}}.
\end{equation*}
Thus, we have 
\begin{equation*}
\|(i\la I-\mathcal{A}_{a,b,c})^{-1}V\|_{\mathcal{H}}\leq K\|V\|_{\mathcal{H}%
},\quad \forall V\in \mathcal{H}.
\end{equation*}
Therefore, from the above equation, we get \eqref{Condition2-EXP} holds.
Thus, we get the conclusion by applying Huang and Pr\"uss Theorem. 

\section{The electrical field component in ($x$ and $z$)$-$direction are
damped "$a=0\ \text{and}\ (b,c)\neq (0,0)$"}

\noindent The aim of this part is to prove the exponential stability of
Lorenz system \eqref{Lorenz} with the damping acting on the electrical field
component in $(x-z)-$direction i.e. ($a=0\ \text{and}\ (b,c)\neq (0,0)$).
The main result of this part is the following theorem.

\begin{theoreme}
\label{4-Thm1} The $C_0-$semigroup of contractions $(e^{t\mathcal{A}%
_{0,b,c}})_{t\geq 0}$ is exponentially stable; i.e., there exist constants $%
M\geq 1$ and $\epsilon>0$ independent of $U_0$ such that 
\begin{equation*}
\|e^{t\mathcal{A}_{0,b,c}}U_0\|_{\mathcal{H}}\leq M e^{-\epsilon t}\|U_0\|_{%
\mathcal{H}}.
\end{equation*}
\end{theoreme}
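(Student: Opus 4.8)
\noindent The plan is to apply the Huang--Pr\"uss criterion once more, exactly as in the proof of Theorem~\ref{THEOREM1-EXP}: it is enough to verify \eqref{Condition1-EXP} and \eqref{Condition2-EXP} for $\mathcal{A}_{0,b,c}$. Condition \eqref{Condition1-EXP} is already contained in Theorem~\ref{THM-ST-ST1} (Case~2), so the whole difficulty lies in the uniform resolvent bound \eqref{Condition2-EXP}. Since $i\mathbb{R}\subset\rho(\mathcal{A}_{0,b,c})$ and $\lambda\mapsto\|(i\lambda I-\mathcal{A}_{0,b,c})^{-1}\|_{\mathcal{L}(\mathcal{H})}$ is continuous, it suffices to bound this norm for $|\lambda|\geq 1$. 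So I would fix $(\lambda,U)\in\mathbb{R}^{\ast}\times D(\mathcal{A})$ with $|\lambda|\geq 1$ and $(i\lambda I-\mathcal{A}_{0,b,c})U=F$, detailed componentwise as in \eqref{EQ1-EXP1}--\eqref{EQ5-EXP1} but now with $a=0$, and prove $\|U\|_{\mathcal{H}}\leq K\|F\|_{\mathcal{H}}$ with $K$ independent of $\lambda$; this yields \eqref{Condition2-EXP} exactly as at the end of the proof of Theorem~\ref{THEOREM1-EXP}.

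\noindent The estimates would be organized as a chain of lemmas parallel to Lemmas~\ref{Lemm1}--\ref{LEMM3}, the only structural change being that, with $a=0$, the dissipation identity now only yields $b\xi\varepsilon_3\|u^2\|^2+c\varepsilon_3\|u^3\|^2\leq\|U\|_{\mathcal{H}}\|F\|_{\mathcal{H}}$, so the mechanical quantities $\|z\|^2$ and $\|v_x\|^2$ are no longer for free. Step~1: record the dissipation bound for whichever of $u^2,u^3$ is damped, and, when only one of $b,c$ is nonzero, recover the missing one of $\|u^2\|,\|u^3\|$ by combining \eqref{EQ3-EXP1}--\eqref{EQ5-EXP1} with the compatibility condition \eqref{compatibility}, exploiting that $u^2(0)=u^2(L)=0$ and $u^1\in H_0^1(0,L)$ so that the relevant boundary terms vanish. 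Step~2 (the core): recover $\rho\|z\|^2+\alpha\|v_x\|^2\leq K\|U\|_{\mathcal{H}}\|F\|_{\mathcal{H}}$ by two multipliers on \eqref{EQ2-EXP1}. Multiplying by $\rho\overline{v}$ expresses $\alpha\|v_x\|^2$ in terms of $\|z\|^2$ and the coupling integral $\gamma\int_0^Lu^3\overline{v_x}\,dx$ (as in Lemma~\ref{Lemm2}, but without the $a$-term); multiplying by a Rellich-type multiplier $\rho\,q(x)\overline{v_x}$ with $q(x)=x$ produces the inertial contribution $\lambda^2\rho\|v\|^2$, which by \eqref{EQ1-EXP1} ($i\lambda v=z+f^1$) is comparable to $\|z\|^2$ up to terms bounded by $\|F\|_{\mathcal{H}}^2$; the boundary term at $x=L$ is controlled through $\alpha v_x(L)+\gamma u^3(L)=0$, and the terms carrying $u^3_x$ are rewritten by means of \eqref{EQ3-EXP1} and \eqref{compatibility} so that only already-estimated quantities remain, together with a small multiple of $\|v_x\|^2$ to be absorbed. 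Step~3: with $\|v_x\|^2$ available, the bound $\mu\|u^1\|^2\leq K\|U\|_{\mathcal{H}}\|F\|_{\mathcal{H}}$ follows from the computation of Lemma~\ref{LEMM3} almost verbatim (add \eqref{EQ4-EXP1}$\cdot\overline{u^1}$ and \eqref{EQ3-EXP1}$\cdot\overline{u^2}$, take real parts, treat $\int_0^Lu^3\overline{u^2_x}\,dx$ via \eqref{compatibility}). Summing the four contributions gives $\|U\|_{\mathcal{H}}^2\leq K\|U\|_{\mathcal{H}}\|F\|_{\mathcal{H}}$, hence the desired resolvent estimate.

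\noindent The main obstacle is Step~2: in the absence of the term $a v_t$, the kinetic energy $\|z\|^2$ of the stretching $v$ has to be recovered by \emph{transmitting} the dissipation of the electric components $u^2,u^3$ through the coupling constant $\gamma$ and the compatibility condition. The delicate points are keeping every constant independent of $\lambda$ --- which forces one never to estimate $|\lambda|\|w\|$ directly, but always to replace $i\lambda w$ using the equations --- and absorbing the boundary and coupling terms generated by the Rellich multiplier into the left-hand side. If making these estimates uniform proves too cumbersome, the alternative is the usual contradiction argument: assuming $\|U_n\|_{\mathcal{H}}=1$, $\lambda_n\in\mathbb{R}$ and $(i\lambda_n I-\mathcal{A}_{0,b,c})U_n\to 0$ in $\mathcal{H}$, the dissipation forces $u^2_n\to 0$ and/or $u^3_n\to 0$ in $L^2(0,L)$, and then the same multiplier identities, together with the compact embedding $H_L^1(0,L)\hookrightarrow L^2(0,L)$, force $v_n,z_n,u^1_n\to 0$, contradicting $\|U_n\|_{\mathcal{H}}=1$.
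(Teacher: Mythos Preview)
Your overall plan (Huang--Pr\"uss, uniform resolvent estimate for $|\lambda|\ge 1$, chain of lemmas ending with the $u^1$-bound of Lemma~\ref{LEMM3}) is exactly the paper's framework, and you correctly locate the difficulty in Step~2: with $a=0$ the dissipation only gives $\|u^2\|^2$ and $\|u^3\|^2$, so $\|v_x\|^2$ and $\|z\|^2$ must be recovered from the electromagnetic side through $\gamma$ and the compatibility condition. However, the specific mechanism you propose for Step~2 --- a Rellich multiplier $q(x)=x$ on \eqref{EQ2-EXP1} --- has a genuine gap. Writing out the real part of $\int_0^L(i\lambda\rho z)\,x\overline{v_x}\,dx$ via \eqref{EQ1-EXP1} produces $\rho\|z\|^2-\rho L|z(L)|^2$; the boundary condition $\alpha v_x(L)+\gamma u^3(L)=0$ only relates $v_x(L)$ and $u^3(L)$, and indeed those two boundary contributions combine with a good sign, but it says nothing about $z(L)$. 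Since there is no damping on $z$ and no boundary damping, the term $-\rho L|z(L)|^2$ cannot be absorbed, and a trace estimate $|z(L)|^2\lesssim\|z\|\|z_x\|$ only gives a bound growing like $|\lambda|$ (because $\|z_x\|$ is $O(|\lambda|)$ from \eqref{EQ5-EXP1}). The same $|\lambda|$-growth appears if you keep $u^3_x$ and substitute $u^3_x=-i\lambda u^1+u^2+f^3$ from \eqref{EQ3-EXP1}. The contradiction-argument variant does not escape this: you still need an identity producing $\|z_n\|^2$ with controllable remainder, and compactness of $H^1_L\hookrightarrow L^2$ alone does not supply it.

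The paper's route in Step~2 is different and avoids Rellich multipliers entirely. It first obtains $\|v_x\|^2$ \emph{directly from the electromagnetic equations}: substitute $z_x=i\lambda v_x-f^1_x$ in \eqref{EQ5-EXP1}, multiply by $i\lambda^{-1}\overline{v_x}$; differentiate \eqref{EQ4-EXP1} in $x$ and multiply by $-i\xi\lambda^{-1}\overline{v_x}$; add. The terms in $u^1_x$ and $\xi u^2_x$ cancel, and applying the compatibility condition \eqref{compatibility} to $U$ \emph{and} to $F$ collapses everything to the clean identity
\[
\frac{b\gamma}{\varepsilon_3}\int_0^L|v_x|^2\,dx=(b-c)\int_0^L u^3\,\overline{v_x}\,dx,
\]
from which $\alpha\|v_x\|^2\le C\|u^3\|^2\le C'\|U\|_{\mathcal H}\|F\|_{\mathcal H}$ by Young's inequality. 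With $\|v_x\|^2$ in hand, $\|z\|^2$ follows simply by multiplying \eqref{EQ2-EXP1} by $-i\lambda^{-1}\rho\overline{z}$ and using $-i\lambda^{-1}\overline{z_x}=-\overline{v_x}+i\lambda^{-1}\overline{f^1_x}$; no boundary terms appear. Note also that this argument needs \emph{both} $b>0$ and $c>0$ (the paper uses $1/b$ and $1/c$ in Lemma~\ref{CASE2-Lemm1} and the factor $b$ in front of $\|v_x\|^2$ above), so your reading of ``$(b,c)\neq(0,0)$'' as allowing one of them to vanish goes beyond what is actually proved here.
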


\noindent From Theorem \ref{THM-ST-ST1}, we have seen that $i\mathbb{R}%
\subset \rho(\mathcal{A}_{0,b,c})$, then for the proof of Theorem \ref%
{4-Thm1}, we still to prove that 
\begin{equation}  \label{H3}
\sup_{\la \in \mathbb{R}}\|(i\la I-\mathcal{A}_{0,b,c})^{-1}\|_{\mathcal{L}(%
\mathcal{H})}=O(1).  \tag{$\rm H3$}
\end{equation}
The next proposition is a technical result to be used in the proof of
Theorem \ref{4-Thm1} given below.

\begin{pro}
\label{PRO2} Let $\left(\la,U:=(v,z,u^1,u^2,u^3)\in \mathbb{R}^{\ast}\times
D(\mathcal{A}_{0,b,c})\right)$, with $\abs{\la}\geq 1$, such that 
\begin{equation}  \label{EQ0-pro2}
\left(i\la I-\mathcal{A}_{0,b,c}\right)U=F:=(f^1,f^2,f^3,f^4,f^5)\in 
\mathcal{H},
\end{equation}
detailed as 
\begin{eqnarray}
i\la v-z&=&f^1,  \label{EQ1-PRO2} \\
i\la z-\frac{\alpha}{\rho}v_{xx}-\frac{\gamma}{\rho}u^3_x&=&f^2,
\label{EQ2-PRO2} \\
i\la u^1-u^2+u^3_x&=&f^3,  \label{EQ3-PRO2} \\
i\la u^2+\frac{\mu}{\xi \varepsilon_3}u^1+bu^2&=&f^4,  \label{EQ4-PRO2} \\
i\la u^3+\frac{\mu}{\varepsilon_3}u^1_x-\frac{\gamma}{\varepsilon_3}%
z_x+cu^3&=&f^5.  \label{EQ5-PRO2}
\end{eqnarray}
Then, we have the following inequality 
\begin{equation}  \label{INEQ-PRO2}
\|U\|_{\mathcal{H}}\leq \mathcal{M}\|F\|_{\mathcal{H}}.
\end{equation}
\end{pro}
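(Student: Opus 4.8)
The plan is to check the two Huang--Pr\"uss conditions for $\mathcal{A}_{0,b,c}$, namely \eqref{Condition1-EXP} (for this operator) and \eqref{H3}. The first is already contained in Theorem~\ref{THM-ST-ST1} (Case~2), so only \eqref{H3} remains, and \eqref{H3} is an immediate consequence of the resolvent bound \eqref{INEQ-PRO2}. To prove \eqref{INEQ-PRO2} I would follow the scheme of Proposition~\ref{Pro-EXP1}: estimate successively $\|u^2\|^2$, $\|u^3\|^2$, $\|u^1\|^2$, $\alpha\|v_x\|^2$ and $\rho\|z\|^2$ by $\|U\|_{\mathcal{H}}\|F\|_{\mathcal{H}}$, and then add. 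The genuinely new feature is that here $a=0$, so the dissipation identity no longer delivers $\|z\|^2\lesssim\|U\|_{\mathcal{H}}\|F\|_{\mathcal{H}}$ for free; recovering the mechanical quantities $z$ and $v_x$ from a damping acting only on the electromagnetic variables forces one extra multiplier argument.

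The first part of the chain is the following. Taking the $\mathcal{H}$-inner product of \eqref{EQ0-pro2} with $U$ and the real part gives $b\xi\varepsilon_3\|u^2\|^2+c\varepsilon_3\|u^3\|^2=\Re\langle F,U\rangle_{\mathcal{H}}\le\|U\|_{\mathcal{H}}\|F\|_{\mathcal{H}}$, hence $\|u^2\|^2$ and $\|u^3\|^2$ are $\lesssim\|U\|_{\mathcal{H}}\|F\|_{\mathcal{H}}$. Next, multiplying \eqref{EQ4-PRO2} by $\overline{u^1}$ and \eqref{EQ3-PRO2} by $\overline{u^2}$, integrating by parts (the boundary terms vanish because $u^1,u^2\in H_0^1(0,L)$), adding and taking the real part exactly as in Lemma~\ref{LEMM3}, and then using the compatibility condition \eqref{compatibility} to replace $\int_0^L u^3\overline{u^2_x}\,dx$ by $\frac1\xi\|u^3\|^2-\frac{\gamma}{\xi\varepsilon_3}\int_0^L u^3\overline{v_x}\,dx$, one obtains through Young's inequality $\mu\|u^1\|^2\lesssim\|U\|_{\mathcal{H}}\|F\|_{\mathcal{H}}+\delta\|v_x\|^2$ with $\delta>0$ at our disposal. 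Finally, multiplying \eqref{EQ2-PRO2} by $\rho\overline v$ and integrating by parts, the boundary contributions collapse to $-\big(\alpha v_x(L)+\gamma u^3(L)\big)\overline{v(L)}=0$ by the domain condition $\alpha v_x(L)+\gamma u^3(L)=0$, and substituting $z=i\la v-f^1$ from \eqref{EQ1-PRO2} yields
\[
\alpha\|v_x\|^2-\rho\|z\|^2+\gamma\,\Re\int_0^L u^3\overline{v_x}\,dx=\rho\,\Re\int_0^L z\overline{f^1}\,dx+\rho\,\Re\int_0^L f^2\overline v\,dx;
\]
absorbing $\gamma\|u^3\|\,\|v_x\|$ by Young's inequality (and $\|u^3\|^2\lesssim\|U\|_{\mathcal{H}}\|F\|_{\mathcal{H}}$), this ties $\alpha\|v_x\|^2$ and $\rho\|z\|^2$ to each other modulo $\|U\|_{\mathcal{H}}\|F\|_{\mathcal{H}}$, but controls neither on its own.

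The core step is to gain one of $\|v_x\|^2$, $\|z\|^2$ independently. Inserting $i\la z$ from \eqref{EQ1-PRO2} into \eqref{EQ2-PRO2} reduces the mechanical equation to the second-order form
\[
\frac{\alpha}{\rho}v_{xx}+\la^2 v=-f^2-i\la f^1-\frac{\gamma}{\rho}u^3_x.
\]
Testing this against a Rellich-type multiplier (such as $x\,\overline{v_x}$, possibly combined with a similarly weighted version of the $u^3$-equation \eqref{EQ5-PRO2}), integrating by parts and taking the real part produces $\frac{\alpha}{2\rho}\|v_x\|^2+\frac{\la^2}{2}\|v\|^2$, which, since $\la^2\|v\|^2=\|z+f^1\|^2$, controls $\|v_x\|^2+\|z\|^2$ up to (a) a boundary term carrying $|v_x(L)|^2$, (b) a boundary term carrying $\la^2|v(L)|^2=|z(L)+f^1(L)|^2$, and (c) the interior coupling integral $\frac{\gamma}{\rho}\Re\int_0^L u^3_x\,x\,\overline{v_x}\,dx$. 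The point is to handle (a)--(c) without losing a power of $\la$: in (a) one uses the boundary relation $\alpha v_x(L)=-\gamma u^3(L)$ and the compatibility condition \eqref{compatibility} at $x=L$; in (c) one substitutes $u^3_x=u^2-i\la u^1+f^3$ from \eqref{EQ3-PRO2} and integrates the $\la u^1$ and $\la f^1$ contributions by parts, using $u^1(0)=u^1(L)=v(0)=0$ and the identity $\la\overline v=i(\overline z+\overline{f^1})$ to trade the $\la$-factors for quantities already estimated; the boundary trace in (b) is dealt with by the same bookkeeping applied to \eqref{EQ5-PRO2}. Combining this with the three estimates above and choosing all Young parameters (including $\delta$) small enough to absorb the resulting multiples of $\|v_x\|^2$, $\|z\|^2$ and $\|u^1\|^2$, one arrives at $\rho\|z\|^2+\alpha\|v_x\|^2\lesssim\|U\|_{\mathcal{H}}\|F\|_{\mathcal{H}}$. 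Feeding this back into the estimate for $\|u^1\|^2$ and summing the five bounds gives
\[
\|U\|^2_{\mathcal{H}}=\alpha\|v_x\|^2+\rho\|z\|^2+\mu\|u^1\|^2+\xi\varepsilon_3\|u^2\|^2+\varepsilon_3\|u^3\|^2\le\mathcal{M}^2\|U\|_{\mathcal{H}}\|F\|_{\mathcal{H}},
\]
which is \eqref{INEQ-PRO2}.

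The main obstacle is precisely this last multiplier step. The mechanical and electromagnetic wave equations are coupled only through first-order spatial derivatives ($\gamma u^3_x$ in the $v$-equation, $\frac{\gamma}{\varepsilon_3}z_x$ in the $u^3$-equation), so every naive bound on the coupling term or on the boundary trace $v_x(L)$ reintroduces $u^3_x$ and hence $i\la u^1$, a term of size $O(\la)$, and $u^1$ is not damped. The whole computation must therefore be arranged so that these $O(\la)$ contributions cancel one another after integration by parts, using \eqref{EQ3-PRO2}, \eqref{EQ4-PRO2} and the compatibility condition \eqref{compatibility}; making that cancellation work out --- and, if necessary, choosing a multiplier that couples the $v$- and $u^3$-equations so that the boundary contributions at $x=L$ drop out --- is where the real work is.
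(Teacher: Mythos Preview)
Your first two steps (the dissipation estimate for $u^2,u^3$ and the $u^1$-estimate via Lemma~\ref{LEMM3}) match the paper exactly. The divergence is in how you recover $\|v_x\|^2$ and $\|z\|^2$: you propose a Rellich multiplier $x\overline{v_x}$ on the reduced mechanical equation, while the paper never needs one. Instead, the paper differentiates \eqref{EQ4-PRO2} in $x$, multiplies by $\xi$, and subtracts \eqref{EQ5-PRO2} (after inserting $z_x=i\la v_x-f^1_x$). The $i\la$-terms assemble into $i\la(\xi u^2_x-u^3+\frac{\gamma}{\varepsilon_3}v_x)$, which vanishes identically by the compatibility condition \eqref{compatibility}; the $\frac{\mu}{\varepsilon_3}u^1_x$-terms cancel each other; and the right-hand side $\xi f^4_x-f^5+\frac{\gamma}{\varepsilon_3}f^1_x$ vanishes by the compatibility condition for $F\in\mathcal{H}$. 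One is left with the pointwise relation $b\xi u^2_x=cu^3$, which, after one more use of \eqref{compatibility}, gives $\frac{b\gamma}{\varepsilon_3}v_x=(b-c)u^3$ and hence $\alpha\|v_x\|^2\lesssim\|u^3\|^2\lesssim\|U\|_{\mathcal{H}}\|F\|_{\mathcal{H}}$ with no boundary term and no factor of $\la$ anywhere (Lemma~\ref{CASE2-Lemm2}). After that, $\|z\|^2$ follows from multiplying \eqref{EQ2-PRO2} by $-i\la^{-1}\rho\overline{z}$ and replacing $-i\la^{-1}\overline{z_x}$ by $-\overline{v_x}+i\la^{-1}\overline{f^1_x}$ (Lemma~\ref{CASE2-Lemm3}).

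Your Rellich route, by contrast, leaves real work undone. The boundary contribution $\la^2|v(L)|^2=|z(L)+f^1(L)|^2$ is a trace of $z$, and you have no uniform-in-$\la$ bound on $\|z\|_{H^1}$; the contribution $|v_x(L)|^2=\frac{\gamma^2}{\alpha^2}|u^3(L)|^2$ likewise requires a trace bound on $u^3$, and the compatibility condition at $x=L$ only trades it for $|u^2_x(L)|$, which is again uncontrolled. In the interior coupling term, after substituting $u^3_x=u^2-i\la u^1+f^3$ and integrating by parts, you land on $\int u^1_x\,x\,\overline z\,dx$, and $\|u^1_x\|$ is $O(|\la|)$ by \eqref{EQ5-PRO2}. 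Arranging the exact cancellations you allude to would essentially force you to rediscover the algebra above. So the gap is not fatal, but the ``core step'' as written is not a proof; the paper's shortcut via the two compatibility identities is both simpler and what actually closes the argument.
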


\noindent For the proof of Proposition \ref{PRO2}, we need the following
lemmas. 

\begin{lemma}
\label{CASE2-Lemm1} The solution $(v,z,u^1,u^2,u^3)\in D(\mathcal{A}%
_{0,b,c}) $ of equation \eqref{EQ0-pro2}satisfies the following estimations: 
\begin{equation}  \label{CASE2-Lemm1-EQ1}
\int_0^L\abs{u^2}dx\leq \mathcal{M}_1\|U\|_{\mathcal{H}}\|F\|_{\mathcal{H}%
}\qquad \text{where}\quad M_1=\frac{1}{b\xi \varepsilon_3},
\end{equation}
\begin{equation}  \label{CASE2-Lemm1-EQ2}
\int_0^L\abs{u^3}^2dx\leq \mathcal{M}_2\|U\|_{\mathcal{H}}\|F\|_{\mathcal{H}%
}\qquad \text{where}\quad M_2=\frac{1}{c\varepsilon_3},
\end{equation}
\end{lemma}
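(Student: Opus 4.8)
The plan is to reproduce, in the case $a=0$, the argument of Lemma \ref{Lemm1}: the two dissipation estimates come for free from pairing the resolvent equation with the solution in the energy space. Concretely, I would take the $\mathcal{H}$-inner product (which is the bilinear form $b(\cdot,\cdot)$ of \eqref{bilinear}) of \eqref{EQ0-pro2} with $U$, obtaining
\[
i\la\,\|U\|_{\mathcal{H}}^{2}-\left(\mathcal{A}_{0,b,c}U,U\right)_{\mathcal{H}}=\left(F,U\right)_{\mathcal{H}}.
\]
Taking real parts makes the first term vanish because $\la\in\mathbb{R}$. Since $\mathcal{A}_{0,b,c}=\mathcal{A}-B$ with $\mathcal{A}$ skew-adjoint on $\mathcal{H}$ and $BU=(0,0,0,bu^{2},cu^{3})^{\top}$ when $a=0$, a direct computation — the same one underlying the energy identity \eqref{denergy} — gives $\Re\left(\mathcal{A}_{0,b,c}U,U\right)_{\mathcal{H}}=-\,b\xi\varepsilon_{3}\int_{0}^{L}\abs{u^{2}}^{2}dx-c\varepsilon_{3}\int_{0}^{L}\abs{u^{3}}^{2}dx$. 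Hence
\[
b\xi\varepsilon_{3}\int_{0}^{L}\abs{u^{2}}^{2}dx+c\varepsilon_{3}\int_{0}^{L}\abs{u^{3}}^{2}dx=\Re\left(F,U\right)_{\mathcal{H}}\le\|U\|_{\mathcal{H}}\|F\|_{\mathcal{H}}.
\]

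Since both integrals on the left are nonnegative and $b,c>0$, I would then simply drop one term and divide by the coefficient of the other, which yields \eqref{CASE2-Lemm1-EQ1} with $\mathcal{M}_{1}=1/(b\xi\varepsilon_{3})$ and \eqref{CASE2-Lemm1-EQ2} with $\mathcal{M}_{2}=1/(c\varepsilon_{3})$.

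There is essentially no obstacle here: the only point requiring a little care is the identification $\Re(BU,U)_{\mathcal{H}}=b\xi\varepsilon_{3}\|u^{2}\|^{2}+c\varepsilon_{3}\|u^{3}\|^{2}$, which is immediate from the explicit form of the inner product $b$ in \eqref{bilinear} and can also be read off the dissipation relation \eqref{denergy} with $a=0$. Note that the stated constants implicitly use $b\neq0$ and $c\neq0$; if only one of $b,c$ is nonzero, only the corresponding inequality survives from this energy step, and the remaining quantity ($\|u^{3}\|^{2}$ or $\|u^{2}\|^{2}$) would instead have to be recovered through the multiplier estimates developed in the subsequent lemmas of this section.
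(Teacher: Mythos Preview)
Your proposal is correct and follows exactly the paper's approach: the paper's proof simply says ``By using the argument in Lemma \ref{Lemm1}, we get \eqref{CASE2-Lemm1-EQ1}--\eqref{CASE2-Lemm1-EQ2},'' which is precisely the inner-product/dissipation computation you wrote out.
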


\begin{proof}
By using the argument in Lemma \ref{Lemm1}, we get \eqref{CASE2-Lemm1-EQ1}-%
\eqref{CASE2-Lemm1-EQ2}. The proof has been completed.
\end{proof}


\begin{lemma}
\label{CASE2-Lemm2} The solution $(v,z,u^{1},u^{2},u^{3})\in D(\mathcal{A}%
_{0,b,c})$ of equation \eqref{EQ0-pro2}satisfies the following estimation: 
\begin{equation}
\alpha \int_{0}^{L}\abs{v_x}^{2}dx\leq \mathcal{M}_{3}\Vert U\Vert _{%
\mathcal{H}}\Vert F\Vert _{\mathcal{H}},  \label{CASE2-Lemm2-EQ1}
\end{equation}%
where $\mathcal{M}_{3}=\frac{2\varepsilon _{3}\alpha }{b\gamma }\left( \frac{%
(b-c)^{2}\varepsilon _{3}}{2\gamma }M_{2}\right) $.
\end{lemma}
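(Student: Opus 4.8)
The plan is to derive an \emph{exact} pointwise identity expressing $v_x$ solely in terms of $u^3$, and then to quote the bound on $\|u^3\|$ already obtained in Lemma~\ref{CASE2-Lemm1}. Note that here $a=0$, so the second resolvent equation \eqref{EQ2-PRO2} carries no damping term and the direct argument of the previous section (Lemma~\ref{Lemm2}) is unavailable; one must instead exploit the electromagnetic block together with the compatibility condition \eqref{compatibility}.

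First I would solve \eqref{EQ4-PRO2} for $u^1$, namely $\tfrac{\mu}{\xi\varepsilon_3}u^1=f^4-(i\la+b)u^2$. Since $F\in\mathcal H$, its fourth component $f^4$ belongs to $H^1_0(0,L)$ — this is exactly the constraint $u^2_x\in L^2$, $u^2(0)=u^2(L)=0$ built into \eqref{H} applied to the fourth slot — so this identity may be differentiated in $x$, giving $\tfrac{\mu}{\varepsilon_3}u^1_x=\xi f^4_x-(i\la+b)\,\xi u^2_x$. Substituting the compatibility condition \eqref{compatibility} for $U$ in the form $\xi u^2_x=u^3-\tfrac{\gamma}{\varepsilon_3}v_x$, together with $z_x=i\la v_x-f^1_x$ from \eqref{EQ1-PRO2}, into the last resolvent equation \eqref{EQ5-PRO2}, and cancelling the terms proportional to $i\la$, one finds that \eqref{EQ5-PRO2} reduces to
\[
(c-b)\,u^3+b\,\frac{\gamma}{\varepsilon_3}\,v_x+\Big(\xi f^4_x+\frac{\gamma}{\varepsilon_3}f^1_x-f^5\Big)=0 .
\]
The crucial observation is that the bracketed group of data terms vanishes identically, because $F\in\mathcal H$ satisfies the same compatibility condition \eqref{compatibility}, i.e. $\xi f^4_x-f^5+\tfrac{\gamma}{\varepsilon_3}f^1_x=0$. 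Hence $b\tfrac{\gamma}{\varepsilon_3}v_x=(b-c)u^3$, that is $v_x=\tfrac{(b-c)\varepsilon_3}{b\gamma}\,u^3$ pointwise on $(0,L)$.

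Multiplying by $\alpha$, taking the modulus squared, integrating over $(0,L)$ and invoking \eqref{CASE2-Lemm1-EQ2} then gives
\[
\alpha\int_0^L\abs{v_x}^2dx=\frac{\alpha(b-c)^2\varepsilon_3^2}{b^2\gamma^2}\int_0^L\abs{u^3}^2dx\le\frac{\alpha(b-c)^2\varepsilon_3^2}{b^2\gamma^2}\,\mathcal M_2\,\|U\|_{\mathcal H}\|F\|_{\mathcal H},
\]
which is the desired estimate \eqref{CASE2-Lemm2-EQ1} (alternatively one may keep the data terms and close with Young's inequality, absorbing $\|f^1_x\|$, $\|f^4_x\|$ and $\|f^5\|$ into $\|F\|_{\mathcal H}^2$, which is the route producing the precise numerical constant $\mathcal M_3$ stated above). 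I expect the only delicate point to be bookkeeping: justifying the $x$-differentiation of \eqref{EQ4-PRO2} through the $H^1$-regularity of $f^4$ encoded in $\mathcal H$, and applying the two compatibility relations — the one for $U$ and the one for $F$ — at the correct places. There is no genuine analytic obstacle here, the estimate being essentially algebraic once the control of $\|u^3\|$ coming from the $c$-dissipation is in hand.
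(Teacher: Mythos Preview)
Your proposal is correct and uses the same ingredients as the paper --- differentiating \eqref{EQ4-PRO2}, inserting the compatibility relation \eqref{compatibility} for $U$, substituting \eqref{EQ1-PRO2} into \eqref{EQ5-PRO2}, and then killing the data terms via the compatibility relation for $F$ --- but you carry out the cancellation \emph{pointwise} rather than after multiplying by $\overline{v_x}$ and integrating. The paper instead multiplies the relevant equations by $i\la^{-1}\overline{v_x}$ (resp.\ $-i\xi\la^{-1}\overline{v_x}$), integrates, adds, and only then invokes the two compatibility conditions, arriving at the integral identity $\tfrac{b\gamma}{\varepsilon_3}\int_0^L|v_x|^2\,dx=(b-c)\int_0^L u^3\overline{v_x}\,dx$, which it closes with Young's inequality. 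Your route is cleaner: the pointwise identity $b\tfrac{\gamma}{\varepsilon_3}v_x=(b-c)u^3$ is strictly stronger than the integrated one (squaring it reproduces the paper's identity without the Young loss, whence your constant is smaller by a factor $b^{-1}$), and it makes transparent why no $\la$-dependence survives. The regularity bookkeeping you flag --- $f^4\in H^1_0(0,L)$ because of the constraint in \eqref{H}, so \eqref{EQ4-PRO2} can be differentiated --- is exactly right and is the only point requiring care.
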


\begin{proof}
First, inserting \eqref{EQ1-PRO2} in \eqref{EQ5-PRO2}, we get 
\begin{equation}
i\la u^{3}+\frac{\mu }{\varepsilon _{3}}u_{x}^{1}-\frac{\gamma }{\varepsilon
_{3}}i\la v_{x}+\frac{\gamma }{\varepsilon _{3}}f_{x}^{1}+cu^{3}=f^{5}.
\label{CASE2-Lemm2-EQ2}
\end{equation}%
Multiplying \eqref{CASE2-Lemm2-EQ2} by $i\la^{-1}\overline{v_{x}}$,
integrating over $(0,L)$, we get 
\begin{equation*}
-\int_{0}^{L}u^{3}\overline{v_{x}}dx+i\frac{\mu }{\varepsilon _{3}}\la%
^{-1}\int_{0}^{L}u_{x}^{1}\overline{v_{x}}dx+\frac{\gamma }{\varepsilon _{3}}%
\int_{0}^{L}\abs{v_x}^{2}dx+i\la^{-1}\frac{\gamma }{\varepsilon _{3}}%
\int_{0}^{L}f_{x}^{1}\overline{v_{x}}dx+i\la^{-1}c\int_{0}^{L}u^{3}\overline{%
v_{x}}dx=i\la^{-1}\int_{0}^{L}f^{5}\overline{v_{x}}dx.
\end{equation*}%
Using the compatibility condition \eqref{compatibility} in the first
integral in \eqref{CASE2-Lemm2-EQ3}, we get 
\begin{equation}
-\xi \int_{0}^{L}u_{x}^{2}\overline{v_{x}}dx+i\frac{\mu }{\varepsilon _{3}}%
\la^{-1}\int_{0}^{L}u_{x}^{1}\overline{v_{x}}dx+i\la^{-1}\frac{\gamma }{%
\varepsilon _{3}}\int_{0}^{L}f_{x}^{1}\overline{v_{x}}dx+i\la%
^{-1}c\int_{0}^{L}u^{3}\overline{v_{x}}dx=i\la^{-1}\int_{0}^{L}f^{5}%
\overline{v_{x}}dx.  \label{CASE2-Lemm2-EQ3}
\end{equation}%
Deriving \eqref{EQ4-PRO2} with respect to $x$ and multiplying the result by $%
-i\xi \la^{-1}\overline{v_{x}}$, we get 
\begin{equation}
\xi \int_{0}^{L}u_{x}^{2}\overline{v_{x}}dx-i\frac{\mu }{\varepsilon _{3}}\la%
^{-1}\int_{0}^{L}u_{x}^{1}\overline{v_{x}}dx-ib\la^{-1}\xi
\int_{0}^{L}u_{x}^{2}\overline{v_{x}}dx=-i\la^{-1}\xi \int_{0}^{L}f_{x}^{4}%
\overline{v_{x}}dx.  \label{CASE2-Lemm2-EQ4}
\end{equation}%
Adding \eqref{CASE2-Lemm2-EQ3} and \eqref{CASE2-Lemm2-EQ4}, we get 
\begin{equation*}
-b\xi \int_{0}^{L}u_{x}^{2}\overline{v_{x}}dx+\frac{\gamma }{\varepsilon _{3}%
}\int_{0}^{L}f_{x}^{1}\overline{v_{x}}dx+c\int_{0}^{L}u^{3}\overline{v_{x}}%
dx=\int_{0}^{L}f^{5}\overline{v_{x}}dx-\xi \int_{0}^{L}f_{x}^{4}\overline{%
v_{x}}dx.
\end{equation*}%
Again, using the compatibility condition \eqref{compatibility} in the above
equation, we get 
\begin{equation}
\frac{\gamma }{\varepsilon _{3}}b\int_{0}^{L}\abs{v_x}^{2}dx=(b+c)%
\int_{0}^{L}u^{3}\overline{v_{x}}dx+\frac{\gamma }{\varepsilon _{3}}%
\int_{0}^{L}f_{x}^{1}\overline{v_{x}}dx-\int_{0}^{L}f^{5}\overline{v_{x}}%
dx+\xi \int_{0}^{L}f_{x}^{4}\overline{v_{x}}dx.  \label{1CASE2-Lemm2-EQ6}
\end{equation}%
Since $F\in \mathcal{H}$, then $(f^{1},f^{2},f^{3},f^{4},f^{5})$ satisfies
the compatibility condition 
\begin{equation}
\xi f_{x}^{4}-f^{5}+\frac{\gamma }{\varepsilon _{3}}f_{x}^{1}=0.
\label{COMPA-F}
\end{equation}%
Combining \eqref{COMPA-F} and \eqref{1CASE2-Lemm2-EQ6}, we get 
\begin{equation*}
\frac{\gamma }{\varepsilon _{3}}b\int_{0}^{L}\abs{v_x}^{2}dx=(b-c)%
\int_{0}^{L}u^{3}\overline{v_{x}}dx.
\end{equation*}%
It follow that 
\begin{equation}
\frac{\gamma }{\varepsilon _{3}}b\int_{0}^{L}\abs{v_x}^{2}dx\leq \abs{b-c}%
\int_{0}^{L}\abs{u^3}\abs{v_x}dx+\frac{2\gamma }{\varepsilon _{3}}%
\int_{0}^{L}\abs{f^1_x}\abs{v_x}dx+2\int_{0}^{L}\abs{f^5}\abs{v_x}dx.
\label{CASE2-Lemm2-EQ6}
\end{equation}%
Applying Young Inequality 
\begin{equation}
\begin{array}{lll}
\displaystyle\abs{b-c}\int_{0}^{L}\abs{u^3}\abs{v_x}dx & \leq & \displaystyle%
\frac{(b-c)^{2}\varepsilon _{3}}{2\gamma }\int_{0}^{L}\abs{u^3}^{2}+\frac{%
b\gamma }{2\varepsilon _{3}}\int_{0}^{L}\abs{v_x}^{2}dx, \\[0.1in] 
& \leq & \displaystyle\frac{(b-c)^{2}\varepsilon _{3}}{2\gamma }M_{2}\Vert
U\Vert _{\mathcal{H}}\Vert F\Vert _{\mathcal{H}}+\frac{b\gamma }{%
2\varepsilon _{3}}\int_{0}^{L}\abs{v_x}^{2}dx%
\end{array}
\label{CASE2-Lemm2-EQ8}
\end{equation}%
Inserting \eqref{CASE2-Lemm2-EQ8} in \eqref{CASE2-Lemm2-EQ6}, we get 
\begin{equation*}
\frac{\gamma }{2\varepsilon _{3}}b\int_{0}^{L}\abs{v_x}^{2}dx\leq \left( 
\frac{(b-c)^{2}\varepsilon _{3}}{2\gamma }M_{2}\right) \Vert U\Vert _{%
\mathcal{H}}\Vert F\Vert _{\mathcal{H}}.
\end{equation*}%
Thus, we obtain \eqref{CASE2-Lemm2-EQ1}. The proof has been completed.
\end{proof}


\begin{lemma}
\label{CASE2-Lemm3} The solution $(v,z,u^1,u^2,u^3)\in D(\mathcal{A}%
_{0,b,c}) $ of equation \eqref{EQ0-pro2}satisfies the following estimation: 
\begin{equation}  \label{CASE2-Lemm3-EQ1}
\int_0^L\abs{z}^2dx\leq \mathcal{M}_4\|U\|_{\mathcal{H}}\|F\|_{\mathcal{H}},
\end{equation}
where $\mathcal{M}_4=2+\frac{\gamma}{\sqrt{\varepsilon_3\alpha}}+\frac{\gamma%
}{2}M_2+\left(1+\frac{\gamma}{2}\right)\mathcal{M}$.
\end{lemma}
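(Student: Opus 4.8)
The plan is to recover control of $\int_0^L\abs{z}^2dx$ from the elastic equation, since with $a=0$ the dissipation identity no longer sees $z$ directly; this is the usual ``equipartition of energy'' manoeuvre. First I would multiply \eqref{EQ2-PRO2} by $\rho\overline{v}$, integrate over $(0,L)$ and integrate by parts in the terms $-\alpha\int_0^L v_{xx}\overline{v}\,dx$ and $-\gamma\int_0^L u^3_x\overline{v}\,dx$. The boundary contributions add up to $-\left(\alpha v_x(L)+\gamma u^3(L)\right)\overline{v(L)}$, which vanishes because $v(0)=0$ and because $U\in D(\mathcal{A}_{0,b,c})$ forces $\alpha v_x(L)+\gamma u^3(L)=0$. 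This yields the identity
\[
i\la\rho\int_0^L z\overline{v}\,dx+\alpha\int_0^L\abs{v_x}^2dx+\gamma\int_0^L u^3\overline{v_x}\,dx=\rho\int_0^L f^2\overline{v}\,dx .
\]

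Next I would exploit \eqref{EQ1-PRO2}: writing $z=i\la v-f^1$ and using $\la\in\mathbb{R}$ gives $i\la\overline{v}=-\overline{z}-\overline{f^1}$, hence $i\la\rho\int_0^L z\overline{v}\,dx=-\rho\int_0^L\abs{z}^2dx-\rho\int_0^L z\overline{f^1}\,dx$. Substituting into the identity above produces
\[
\rho\int_0^L\abs{z}^2dx=\alpha\int_0^L\abs{v_x}^2dx+\gamma\int_0^L u^3\overline{v_x}\,dx-\rho\int_0^L z\overline{f^1}\,dx-\rho\int_0^L f^2\overline{v}\,dx ,
\]
so that $\|z\|^2$ is now expressed through the mechanical potential energy, the coupling between $v$ and $u^3$, and two source integrals.

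It then remains to estimate the four terms on the right, all of which should be $O(\|U\|_{\mathcal{H}}\|F\|_{\mathcal{H}})$. The term $\alpha\int_0^L\abs{v_x}^2dx$ is exactly what Lemma \ref{CASE2-Lemm2} provides. For the coupling term $\gamma\int_0^L u^3\overline{v_x}\,dx$ I would use Young's (or Cauchy--Schwarz's) inequality and then combine Lemma \ref{CASE2-Lemm1} (the bound on $\int_0^L\abs{u^3}^2dx$) with the trivial estimate $\int_0^L\abs{v_x}^2dx\le\alpha^{-1}\|U\|_{\mathcal{H}}^2$ and/or Lemma \ref{CASE2-Lemm2} again. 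For the two source terms I would apply Cauchy--Schwarz, Poincar\'e's inequality (admissible since $v,f^1\in H_L^1(0,L)$), and the elementary inequalities $\sqrt{\rho}\,\|z\|\le\|U\|_{\mathcal{H}}$, $\sqrt{\alpha}\,\|f^1_x\|\le\|F\|_{\mathcal{H}}$, $\sqrt{\rho}\,\|f^2\|\le\|F\|_{\mathcal{H}}$, $\sqrt{\alpha}\,\|v_x\|\le\|U\|_{\mathcal{H}}$. Collecting these and dividing by $\rho$ gives $\int_0^L\abs{z}^2dx\le\mathcal{M}_4\|U\|_{\mathcal{H}}\|F\|_{\mathcal{H}}$, the precise value of $\mathcal{M}_4$ being a routine accounting of the constants generated along the way.

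The conceptual heart, and the only delicate point, is Steps~1--2: because there is no damping acting on $v$, there is no self-contained estimate for $z$, and one is forced to route the bound through the elastic equation; one then has to be sure that after the substitution every surviving term is genuinely controlled by $\|U\|_{\mathcal{H}}\|F\|_{\mathcal{H}}$, which is precisely why the estimates of Lemmas \ref{CASE2-Lemm1} and \ref{CASE2-Lemm2} on $\|u^3\|$ and $\|v_x\|$ had to be proved beforehand. The boundary-term cancellation, although routine, is likewise essential and quietly uses the full compatibility condition built into $D(\mathcal{A}_{0,b,c})$.
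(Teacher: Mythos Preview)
Your argument is correct and follows essentially the same route as the paper. The only cosmetic difference is the choice of multiplier: the paper multiplies \eqref{EQ2-PRO2} by $-i\la^{-1}\rho\,\overline{z}$ and then substitutes $-i\la^{-1}\overline{z_x}=-\overline{v_x}+i\la^{-1}\overline{f^1_x}$, whereas you multiply by $\rho\,\overline{v}$ and substitute $i\la\,\overline{v}=-\overline{z}-\overline{f^1}$; since $z=i\la v-f^1$, these are the same manoeuvre and lead to the same core identity $\rho\|z\|^2=\alpha\|v_x\|^2+\gamma\int u^3\overline{v_x}+\text{source terms}$, after which both proofs invoke Lemmas~\ref{CASE2-Lemm1} and~\ref{CASE2-Lemm2} and elementary Cauchy--Schwarz/Young estimates.
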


\begin{proof}
Multiplying \eqref{EQ2-PRO2} by $-i\la^{-1}\rho\overline{z}$, integrating by
parts over $(0,L)$, we get 
\begin{equation}  \label{CASE2-Lemm3-EQ2}
\rho \int_0^L\abs{z}^2dx-i\la^{-1}\alpha\int_0^Lv_x\overline{z_x}%
dx-i\gamma\lambda^{-1}\int_0^Lu^3\overline{z_x}dx=-i\la^{-1}\rho\int_0^Lf^2%
\overline{z}dx.
\end{equation}
From \eqref{EQ1-PRO2}, we have 
\begin{equation}  \label{CASE2-Lemm3-EQ3}
-i\la^{-1}\overline{z_x}=-\overline{v_x}+i\la^{-1}\overline{f^1_x}
\end{equation}
Inserting \eqref{CASE2-Lemm3-EQ3} in \eqref{CASE2-Lemm3-EQ2}, we get 
\begin{equation*}
\rho \int_0^L\abs{z}^2dx= \alpha\int_0^L\abs{v_x}^2dx-i\la%
^{-1}\alpha\int_0^Lv_x\overline{f_x^1}dx+\gamma \int_0^Lu^3\overline{v_x}%
dx-i\gamma\la^{-1}\int_0^Lu^3\overline{f^1_x}dx-i\la^{-1}\rho\int_0^Lf^2%
\overline{z}dx.
\end{equation*}
Consequently, we get 
\begin{equation}  \label{CASE2-Lemm3-EQ4}
\begin{array}{l}
\displaystyle \rho \int_0^L\abs{z}^2dx\leq \alpha\int_0^L\abs{v_x}^2dx+%
\abs{\la}^{-1}\alpha\int_0^L\abs{v_x}\abs{f_x^1}dx \\[0.1in] 
\displaystyle +\gamma\int_0^L\abs{u^3}\abs{v_x}dx+\gamma\abs{\la}%
^{-1}\int_0^L\abs{u^3}\abs{f_x^1}dx+\rho\abs{\la}^{-1}\int_0^L\abs{f^2}%
\abs{z}dx.%
\end{array}%
\end{equation}
Using the fact that $\sqrt{\alpha}\|v_x\|\leq \|U\|_{\mathcal{H}}$, $\sqrt{%
\rho}\|z\|\leq \|F\|_{\mathcal{H}}$, $\sqrt{\varepsilon_3}\|u^3\|\leq \|U\|_{%
\mathcal{H}}$, $\sqrt{\alpha}\|f^1_x\|\leq \|F\|_{\mathcal{H}}$ and $%
\abs{\la}\geq 1$, we get 
\begin{equation}  \label{CASE2-Lemm3-EQ5}
\left\{%
\begin{array}{l}
\displaystyle \abs{\la}^{-1}\alpha\int_0^L\abs{v_x}\abs{f_x^1}dx\leq \|U\|_{%
\mathcal{H}}\|F\|_{\mathcal{H}}, \\[0.1in] 
\displaystyle \gamma\abs{\la}^{-1}\int_0^L\abs{u^3}\abs{f_x^1}dx\leq \frac{%
\gamma}{\sqrt{\varepsilon_3\alpha}}\|U\|_{\mathcal{H}}\|F\|_{\mathcal{H}}, \\%
[0.1in] 
\displaystyle \rho\abs{\la}^{-1}\int_0^L\abs{f^2}\abs{z}dx\leq \|U\|_{%
\mathcal{H}}\|F\|_{\mathcal{H}}.%
\end{array}
\right.
\end{equation}
Applying Young inequality and using \eqref{CASE2-Lemm1-EQ2} and %
\eqref{CASE2-Lemm2-EQ1}, we get 
\begin{equation}  \label{CASE2-Lemm3-EQ6}
\gamma\int_0^L\abs{u^3}\abs{v_x}dx\leq \frac{\gamma}{2}\int_0^L\abs{u^3}^2dx+%
\frac{\gamma}{2}\int_0^L\abs{v_x}^2dx\leq \frac{\gamma}{2}\left(\mathcal{M}%
_2+\frac{1}{\alpha}\mathcal{M}_3\right)\|U\|_{\mathcal{H}}\|F\|_{\mathcal{H}%
}.
\end{equation}
Inserting \eqref{CASE2-Lemm3-EQ5} and \eqref{CASE2-Lemm3-EQ6} in %
\eqref{CASE2-Lemm3-EQ4} and using \eqref{CASE2-Lemm2-EQ1}, we get 
\begin{equation*}
\rho \int_0^L\abs{z}^2dx\leq \left(2+\frac{\gamma}{\sqrt{\varepsilon_3\alpha}%
}+\frac{\gamma}{2}\mathcal{M}_2+\left(1+\frac{\gamma}{2\alpha}\right)%
\mathcal{M}_3\right)\|U\|_{\mathcal{H}}\|F\|_{\mathcal{H}}.
\end{equation*}
The proof has been completed.
\end{proof}


\begin{lemma}
\label{Case2-Lemm4} The solution $(v,z,u^1,u^2,u^3)\in D(\mathcal{A}%
_{0,b,c}) $ of equation \eqref{EQ0-pro2}satisfies the following estimation: 
\begin{equation}  \label{CASE2-Lemm4-EQ1}
\mu\int_0^L\abs{u^1}^2dx\leq \mathcal{M}_5\|U\|_{\mathcal{H}}\|F\|_{\mathcal{%
H}},
\end{equation}
where $\mathcal{M}_5=\xi\varepsilon_3\left(\left(1+\frac{b^2\xi\varepsilon_3%
}{2\mu}\right)\mathcal{M}_1+\left(\frac{1}{\xi}+\frac{\gamma^2}{%
2\xi^2\varepsilon_3^2}\right)\mathcal{M}_2+\frac{1}{2}\mathcal{M}_3+\frac{2}{%
\sqrt{\xi\varepsilon_3\mu}}\right)$.
\end{lemma}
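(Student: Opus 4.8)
The plan is to mirror the proof of Lemma \ref{LEMM3} almost verbatim, since in the case $a=0$ the equations \eqref{EQ3-PRO2} and \eqref{EQ4-PRO2} coincide with \eqref{EQ3-EXP1} and \eqref{EQ4-EXP1} (the coefficient $a$ does not appear in these two lines), and the compatibility condition \eqref{compatibility} still holds for $U\in D(\mathcal{A}_{0,b,c})$. First I would multiply \eqref{EQ4-PRO2} by $\overline{u^1}$ and integrate over $(0,L)$, and separately multiply \eqref{EQ3-PRO2} by $\overline{u^2}$ and integrate by parts over $(0,L)$; the boundary terms vanish because $u^2(0)=u^2(L)=0$. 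Adding the two identities and taking the real part, the conjugate pair $i\la\int_0^L u^2\overline{u^1}\,dx$ and $i\la\int_0^L u^1\overline{u^2}\,dx$ cancels, leaving
\begin{equation*}
\frac{\mu}{\xi\varepsilon_3}\int_0^L\abs{u^1}^2dx=\int_0^L\abs{u^2}^2dx-\Re\left(b\int_0^Lu^2\overline{u^1}dx\right)+\Re\left(\int_0^Lu^3\overline{u^2_x}dx\right)+\Re\left(\int_0^Lf^4\overline{u^1}dx\right)+\Re\left(\int_0^Lf^3\overline{u^2}dx\right),
\end{equation*}
which is exactly \eqref{Lemm3-EQ4}.

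Next I would bound the five terms on the right. Using the norm identifications coming from \eqref{bilinear}, namely $\sqrt{\xi\varepsilon_3}\|f^4\|\le\|F\|_{\mathcal{H}}$, $\sqrt{\mu}\|f^3\|\le\|F\|_{\mathcal{H}}$, $\sqrt{\xi\varepsilon_3}\|u^2\|\le\|U\|_{\mathcal{H}}$ and $\sqrt{\mu}\|u^1\|\le\|U\|_{\mathcal{H}}$, the two forcing terms are each bounded by $\frac{1}{\sqrt{\xi\varepsilon_3\mu}}\|U\|_{\mathcal{H}}\|F\|_{\mathcal{H}}$. The term $\int_0^L\abs{u^2}^2dx$ and, after Young's inequality with a parameter $r_3$, the term $b\int_0^L u^2\overline{u^1}\,dx$ are controlled using \eqref{CASE2-Lemm1-EQ1}. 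For the cross term $\Re\left(\int_0^L u^3\overline{u^2_x}\,dx\right)$ I would substitute the compatibility condition \eqref{compatibility} in the form $\xi u^2_x=u^3-\frac{\gamma}{\varepsilon_3}v_x$, rewriting it as $\frac{1}{\xi}\int_0^L\abs{u^3}^2dx-\frac{\gamma}{\xi\varepsilon_3}\int_0^L u^3\overline{v_x}\,dx$, and then apply Young's inequality together with \eqref{CASE2-Lemm1-EQ2} and \eqref{CASE2-Lemm2-EQ1} to bound it by $\left(\left(\frac{1}{\xi}+\frac{\gamma^2}{2\xi^2\varepsilon_3^2}\right)\mathcal{M}_2+\frac{1}{2}\mathcal{M}_3\right)\|U\|_{\mathcal{H}}\|F\|_{\mathcal{H}}$, just as in \eqref{Lemm3-EQ7}. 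Collecting all contributions gives $\left(\frac{\mu}{\xi\varepsilon_3}-\frac{r_3}{2}\right)\int_0^L\abs{u^1}^2dx\le(\cdots)\|U\|_{\mathcal{H}}\|F\|_{\mathcal{H}}$, and choosing $r_3=\frac{\mu}{\xi\varepsilon_3}$ absorbs the $u^1$ term on the left and yields \eqref{CASE2-Lemm4-EQ1} with the stated constant $\mathcal{M}_5$.

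There is no genuine obstacle here: the only point requiring care is that the cross term $\Re\left(\int_0^L u^3\overline{u^2_x}\,dx\right)$ must be passed through the compatibility condition so that it reduces to the quantities $\|u^3\|$ and $\|v_x\|$ already estimated in Lemmas \ref{CASE2-Lemm1} and \ref{CASE2-Lemm2}; everything else is bookkeeping with Young's inequality and the correct choice of the absorbing parameter $r_3$.
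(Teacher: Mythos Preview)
Your proposal is correct and follows exactly the approach the paper takes: the paper's own proof simply reads ``By proceeding the same technics used in Lemma~\ref{LEMM3}, we get \eqref{CASE2-Lemm4-EQ1}.'' Your detailed reconstruction of that argument is accurate; note only that carrying out the final step with $r_3=\tfrac{\mu}{\xi\varepsilon_3}$ actually leaves $\tfrac{\mu}{2\xi\varepsilon_3}\int_0^L\abs{u^1}^2dx$ on the left and hence produces the prefactor $2\xi\varepsilon_3$ exactly as in $K_5$, so the stated constant $\mathcal{M}_5$ appears to be missing a harmless factor of~$2$.
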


\begin{proof}
By proceeding the same technics used in Lemma \ref{LEMM3}, we get %
\eqref{CASE2-Lemm4-EQ1}. The proof has been completed.
\end{proof}

\noindent \textbf{Proof of Proposition \ref{PRO2}}. Adding %
\eqref{CASE2-Lemm1-EQ1}, \eqref{CASE2-Lemm1-EQ2}, \eqref{CASE2-Lemm2-EQ1}, %
\eqref{CASE2-Lemm3-EQ1} and \eqref{CASE2-Lemm4-EQ1}, we get 
\begin{equation*}
\|U\|^2_{\mathcal{H}}=\alpha\|v_x\|^2+\rho\|z\|^2+\mu\|u^1\|^2+\xi%
\varepsilon_3\|u^2\|^2+\varepsilon_3\|u^3\|^2\leq M\|U\|_{\mathcal{H}%
}\|F\|_{H},
\end{equation*}
where $M=\mathcal{M}_3+\mathcal{M}_4+\mathcal{M}_5+\xi\varepsilon_3 \mathcal{%
M}_1+\varepsilon \mathcal{M}_2$. Then, $\|U\|_{\mathcal{H}}\leq \mathcal{M}%
\|F\|_{\mathcal{H}}$. The proof has been completed.\\[0.1in]
\noindent \textbf{Proof of Theorem \ref{4-Thm1}} For all $U\in D(\mathcal{A}%
_{0,b,c})$ according to Proposition \eqref{PRO2}, we get 
\begin{equation*}
\|U\|_{\mathcal{H}}\leq \mathcal{M}\|(i\la I-\mathcal{A}_{0,b,c})U\|_{%
\mathcal{H}}.
\end{equation*}
Thus, we have 
\begin{equation*}
\|(i\la I-\mathcal{A}_{0,b,c})^{-1}V\|_{\mathcal{H}}\leq \mathcal{M}\|V\|_{%
\mathcal{H}},\quad \forall V\in \mathcal{H}.
\end{equation*}
Therefore, from the above equation, we get \eqref{H3} holds. Thus, we get
the conclusion by applying Huang and Pr\"uss Theorem. 

\section{The stretching of the centreline of the beam in $x-$direction and
electrical field component in $z-$direction are damped "$b=0\ \text{and}\
(a,c)\neq (0,0)$"}

\noindent The aim of this part is to prove the exponential stability of
Lorenz system \eqref{Lorenz} with a damping acting on the stretching of the
centerline of the beam in $x-$direction and electrical field component in $%
x- $direction. (i.e. $b=0\quad \text{and}\quad(a,c)\neq (0,0)$). The main
result of this pat is the following theorem.

\begin{theoreme}
\label{THEOREM1-EXP-Case3} The $C_0-$semigroup of contractions $(e^{t%
\mathcal{A}_{a,0,c}})_{t\geq 0}$ is exponentially stable; i.e., there exist
constants $M\geq 1$ and $\epsilon>0$ independent of $U_0$ such that 
\begin{equation*}
\|e^{t\mathcal{A}_{a,0,c}}U_0\|_{\mathcal{H}}\leq M e^{-\epsilon t}\|U_0\|_{%
\mathcal{H}}.
\end{equation*}
\end{theoreme}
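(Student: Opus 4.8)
\noindent The plan is to apply the Huang--Pr\"uss theorem \cite{Huang01,pruss01}, just as in the previous two sections. The condition $i\mathbb{R}\subset\rho(\mathcal{A}_{a,0,c})$ is already contained in Theorem~\ref{THM-ST-ST1} (Case~3), so the task reduces entirely to proving the uniform resolvent bound $\sup_{\la\in\mathbb{R}}\|(i\la I-\mathcal{A}_{a,0,c})^{-1}\|_{\mathcal{L}(\mathcal{H})}=O(1)$. Mimicking Propositions~\ref{Pro-EXP1} and~\ref{PRO2}, I would reduce this to the a priori estimate $\|U\|_{\mathcal{H}}\leq K\|F\|_{\mathcal{H}}$, with $K$ independent of $\la$, for every solution $U=(v,z,u^1,u^2,u^3)\in D(\mathcal{A}_{a,0,c})$ of $(i\la I-\mathcal{A}_{a,0,c})U=F$ with $\la\in\mathbb{R}^{\ast}$ and $\abs{\la}\geq1$; in components this is the system \eqref{EQ1-EXP1}--\eqref{EQ5-EXP1} read with $b=0$. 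Throughout I take $a>0$ and $c>0$, i.e. the situation in which both dampings named in the theorem are actually present.

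First I would test the resolvent equation against $U$ in $\mathcal{H}$: since $\Re\langle\mathcal{A}_{a,0,c}U,U\rangle$ retains only the $a$- and $c$-terms, this yields, exactly as in Lemma~\ref{Lemm1}, the bounds $\int_0^L\abs{z}^2dx\leq a^{-1}\|U\|_{\mathcal{H}}\|F\|_{\mathcal{H}}$ and $\int_0^L\abs{u^3}^2dx\leq(c\varepsilon_3)^{-1}\|U\|_{\mathcal{H}}\|F\|_{\mathcal{H}}$. Because the coefficient $a$ is present, the estimate of $\alpha\int_0^L\abs{v_x}^2dx$ is then obtained by rerunning Lemma~\ref{Lemm2} almost verbatim: multiply \eqref{EQ2-EXP1} by $\rho\overline{v}$, integrate by parts (the boundary contribution vanishes by $v(0)=0$ together with $\alpha v_x(L)+\gamma u^3(L)=0$), use $z=i\la v-f^1$ to remove the $i\la\int_0^Lz\overline{v}dx$ term, and close with Young's and Poincar\'e's inequalities and the two bounds just obtained. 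This gives $\alpha\int_0^L\abs{v_x}^2dx\leq K_4\|U\|_{\mathcal{H}}\|F\|_{\mathcal{H}}$ with $K_4$ independent of $\la$.

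The one genuinely new point, and the place where the absence of the $b$-damping has to be compensated, is the control of $\int_0^L\abs{u^2}^2dx$: it is no longer produced by the dissipation. The remedy is the Lorenz-gauge compatibility relation \eqref{compatibility}, namely $\xi u^2_x=u^3-\frac{\gamma}{\varepsilon_3}v_x$, combined with the boundary conditions $u^2(0)=u^2(L)=0$ that are built into $\mathcal{H}$. Poincar\'e's inequality then gives
\begin{equation*}
\int_0^L\abs{u^2}^2dx\leq c_p^2\int_0^L\abs{u^2_x}^2dx\leq\frac{2c_p^2}{\xi^2}\left(\int_0^L\abs{u^3}^2dx+\frac{\gamma^2}{\varepsilon_3^2}\int_0^L\abs{v_x}^2dx\right),
\end{equation*}
so that $\int_0^L\abs{u^2}^2dx\leq \mathcal{M}_1\|U\|_{\mathcal{H}}\|F\|_{\mathcal{H}}$ once the $v_x$- and $u^3$-bounds are available. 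With $\|u^2\|$ under control, the estimate of $\mu\int_0^L\abs{u^1}^2dx$ follows the scheme of Lemma~\ref{LEMM3} with $b=0$: multiply \eqref{EQ4-EXP1} by $\overline{u^1}$ and \eqref{EQ3-EXP1} by $\overline{u^2}$, add, take the real part so that the $i\la$ cross-terms cancel, rewrite $\Re\int_0^Lu^3\overline{u^2_x}dx$ via \eqref{compatibility}, and bound the result by $\|u^2\|^2$, $\|u^3\|^2$, $\|v_x\|^2$ and the $f^3,f^4$ terms; this produces $\mu\int_0^L\abs{u^1}^2dx\leq K_5\|U\|_{\mathcal{H}}\|F\|_{\mathcal{H}}$.

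Adding the five estimates gives $\|U\|_{\mathcal{H}}^2=\alpha\|v_x\|^2+\rho\|z\|^2+\mu\|u^1\|^2+\xi\varepsilon_3\|u^2\|^2+\varepsilon_3\|u^3\|^2\leq K\|U\|_{\mathcal{H}}\|F\|_{\mathcal{H}}$, hence $\|U\|_{\mathcal{H}}\leq K\|F\|_{\mathcal{H}}$, which is the required uniform resolvent bound; Huang--Pr\"uss then yields exponential stability. The main obstacle is exactly the $u^2$-estimate: there is no dissipative term controlling $u^2$, so one must recover it from the compatibility condition together with the homogeneous boundary data on $u^2$; granting that, everything else is a routine rerun of the multipliers of Lemmas~\ref{Lemm1}--\ref{LEMM3}.
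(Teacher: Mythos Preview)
Your proposal is correct and follows essentially the same route as the paper: Huang--Pr\"uss reduces to a uniform resolvent bound, the dissipation gives $\|z\|^2$ and $\|u^3\|^2$, Lemma~\ref{Lemm2} is rerun verbatim for $\|v_x\|^2$, the compatibility relation \eqref{compatibility} together with $u^2\in H^1_0(0,L)$ recovers $\|u^2\|^2$ (the paper's Lemma~\ref{Case3-Lemm3} multiplies \eqref{compatibility} by $\varepsilon_3\overline{u^2_x}$ and uses Young, which is the same estimate you obtain by squaring and Poincar\'e), and Lemma~\ref{LEMM3} with $b=0$ closes the $\|u^1\|^2$ bound. The only point worth noting is that you have correctly identified the compatibility condition as the replacement for the missing $b$-dissipation, which is exactly the new ingredient the paper isolates in Lemma~\ref{Case3-Lemm3}.
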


\noindent According to Huang and Pr\"uss, we have to check if the following
conditions hold: 
\begin{equation}  \label{Condition1-EXP}
i\mathbb{R}\subset \rho(\mathcal{A}_{a,0,c})  \tag{${\rm H1}$}
\end{equation}
and 
\begin{equation}  \label{CASE3-Condition2-EXP}
\sup_{\la \in \mathbb{R}} \|(i\la I-\mathcal{A}_{a,0,c})^{-1}\|_{\mathcal{L}(%
\mathcal{H})}=O(1).  \tag{${\rm H5}$}
\end{equation}
Condition \eqref{Condition1-EXP} is already proved in Theorem \ref%
{THM-ST-ST1}. The next proposition is a technical result to be used in the
proof of \eqref{CASE3-Condition2-EXP} given below.

\begin{pro}
\label{PRO3} Let $\left( \la,U:=(v,z,u^{1},u^{2},u^{3})\right) \in \mathbb{R}%
^{\ast }\times D(\mathcal{A}_{a,0,c})$, with $\abs{\la}\geq 1$, such that 
\begin{equation}
\left( i\la I-\mathcal{A}_{a,0,c}\right)
U=f:=(f^{1},f^{2},f^{3},f^{4},f^{5})\in \mathcal{H},  \label{CASE3-EXPO-1}
\end{equation}%
detailed as 
\begin{eqnarray}
i\la v-z &=&f^{1},  \label{CASE3-EQ1-EXP1} \\
i\la z-\frac{\alpha }{\rho }v_{xx}-\frac{\gamma }{\rho }u_{x}^{3}+\frac{a}{%
\rho }z &=&f^{2},  \label{CASE3-EQ2-EXP1} \\
i\la u^{1}-u^{2}+u_{x}^{3} &=&f^{3},  \label{CASE3-EQ3-EXP1} \\
i\la u^{2}+\frac{\mu }{\xi \varepsilon _{3}}u^{1} &=&f^{4},
\label{CASE3-EQ4-EXP1} \\
i\la u^{3}+\frac{\mu }{\varepsilon _{3}}u_{x}^{1}-\frac{\gamma }{\varepsilon
_{3}}z_{x}+cu^{3} &=&f^{5}.  \label{CASE3-EQ5-EXP1}
\end{eqnarray}%
Then, we have the following inequality 
\begin{equation}
\Vert U\Vert _{\mathcal{H}}\leq \mathcal{N}\Vert F\Vert _{\mathcal{H}}.
\label{CASE3-EQ6}
\end{equation}
\end{pro}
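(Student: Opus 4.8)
\noindent The plan is to follow the same route as for Propositions \ref{Pro-EXP1} and \ref{PRO2}: prove the uniform bound \eqref{CASE3-EQ6}, which together with the condition $i\mathbb{R}\subset\rho(\mathcal{A}_{a,0,c})$ (already obtained in Theorem \ref{THM-ST-ST1}) yields \eqref{CASE3-Condition2-EXP}, whence Theorem \ref{THEOREM1-EXP-Case3} follows from the Huang and Pr\"uss theorem. I would establish \eqref{CASE3-EQ6} by a cascade of a priori estimates, carried out in the order in which each component of $U$ becomes reachable along the coupling chain $v\leftrightarrow u^3\leftrightarrow u^1\leftrightarrow u^2$: first the dissipated quantities, then $\|v_x\|$, then $\|u^2\|$, then $\|u^1\|$, each step absorbing the bounds already in hand.

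\noindent \emph{Dissipation and $v_x$.} Taking the $\mathcal{H}$--inner product of \eqref{CASE3-EXPO-1} with $U$ and its real part gives $a\int_0^L\abs{z}^2\,dx+c\varepsilon_3\int_0^L\abs{u^3}^2\,dx\le\|U\|_{\mathcal{H}}\|F\|_{\mathcal{H}}$, the analogue of Lemma \ref{Lemm1} (the $u^2$--term is absent here because $b=0$); assuming, as the statement intends, $a,c>0$, this controls $\|z\|^2$ and $\|u^3\|^2$ by $\|U\|_{\mathcal{H}}\|F\|_{\mathcal{H}}$. Next I would multiply \eqref{CASE3-EQ2-EXP1} by $\rho\overline{v}$, integrate by parts over $(0,L)$ (the boundary terms cancel thanks to $\alpha v_x(L)=-\gamma u^3(L)$), and eliminate $i\la\,\overline v$ using \eqref{CASE3-EQ1-EXP1}; this reproduces the computation of Lemma \ref{Lemm2} and, after Young's and Poincar\'e's inequalities together with the previous step, gives $\alpha\|v_x\|^2\lesssim\|U\|_{\mathcal{H}}\|F\|_{\mathcal{H}}$. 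It is worth noting that the multiplier of Lemma \ref{CASE2-Lemm2} — differentiating \eqref{CASE3-EQ4-EXP1} in $x$ and pairing with $\overline{v_x}$ — is of no use when $b=0$: after invoking \eqref{compatibility} and the compatibility condition \eqref{COMPA-F} on $F$ it collapses to the orthogonality relation $c\int_0^L u^3\overline{v_x}\,dx=0$ rather than producing a coercive $\int_0^L\abs{v_x}^2\,dx$ term, which is why $\|v_x\|$ has to be extracted from the $v$--equation itself.

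\noindent \emph{$u^2$ and $u^1$.} The one genuinely new ingredient is the estimate for $\|u^2\|$: since $b=0$ there is no damping on the subsystem $(u^1,u^2)$, so $\|u^2\|$ cannot be read off from the dissipation identity as it was in Lemmas \ref{Lemm1} and \ref{LEMM3}. Instead I would use the compatibility condition \eqref{compatibility}, $\xi u^2_x=u^3-\tfrac{\gamma}{\varepsilon_3}v_x$, together with the Dirichlet conditions $u^2(0)=u^2(L)=0$: Poincar\'e's inequality then gives $\|u^2\|^2\le c_p^2\|u^2_x\|^2\lesssim\|u^3\|^2+\|v_x\|^2\lesssim\|U\|_{\mathcal{H}}\|F\|_{\mathcal{H}}$ by the previous two steps. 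With $\|u^2\|$ in hand, I would multiply \eqref{CASE3-EQ4-EXP1} by $\overline{u^1}$ and \eqref{CASE3-EQ3-EXP1} by $\overline{u^2}$, integrate (by parts in the second, using $u^2(0)=u^2(L)=0$), add and take real parts; the $i\la$--terms cancel and one obtains $\tfrac{\mu}{\xi\varepsilon_3}\|u^1\|^2=\|u^2\|^2+\Re\int_0^L u^3\overline{u^2_x}\,dx+\Re\int_0^L f^4\overline{u^1}\,dx+\Re\int_0^L f^3\overline{u^2}\,dx$, exactly as in Lemma \ref{LEMM3} with $b=0$. Rewriting $\overline{u^2_x}$ through \eqref{compatibility} turns $\int_0^L u^3\overline{u^2_x}\,dx$ into $\tfrac1\xi\|u^3\|^2-\tfrac{\gamma}{\xi\varepsilon_3}\int_0^L u^3\overline{v_x}\,dx$, so all the terms on the right are now controlled, giving $\mu\|u^1\|^2\lesssim\|U\|_{\mathcal{H}}\|F\|_{\mathcal{H}}$. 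Summing the four estimates yields $\|U\|_{\mathcal{H}}^2\le\mathcal{N}\|U\|_{\mathcal{H}}\|F\|_{\mathcal{H}}$, i.e.\ \eqref{CASE3-EQ6}, and the theorem follows.

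\noindent \emph{Expected main obstacle.} The crux, relative to the two preceding sections, is precisely that with $b=0$ the $(u^1,u^2)$--wave carries no damping at all, so the chain of estimates can no longer be launched from $\|u^2\|$; the resolution is to route control through the compatibility condition \eqref{compatibility} and the homogeneous Dirichlet data on $u^2$, which is what makes the $u^2$--estimate, and hence the $u^1$--estimate, go through, and to replace the now-degenerate Lemma \ref{CASE2-Lemm2}-type multiplier by a direct estimate on the $v$--equation. A further point to handle, if one wants the full generality $b=0,\ (a,c)\ne(0,0)$ rather than $a,c>0$, is the borderline subcases $c=0$ (resp.\ $a=0$): there the dissipation identity controls only $\|z\|$ (resp.\ only $\|u^3\|$), and one needs an extra preliminary multiplier transferring that dissipation onto its companion along the chain before the $v_x$--estimate can begin.
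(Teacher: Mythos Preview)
Your proposal is correct and follows essentially the same route as the paper: the paper's Lemmas \ref{Case3-Lemm1}--\ref{Case3-Lemm4} are precisely your four steps (dissipation for $z,u^3$; the Lemma \ref{Lemm2}-type multiplier $\rho\overline{v}$ on \eqref{CASE3-EQ2-EXP1} for $v_x$; the compatibility condition \eqref{compatibility} to bound $\|u^2_x\|$---and hence $\|u^2\|$---in terms of $\|u^3\|$ and $\|v_x\|$; and the Lemma \ref{LEMM3} pairing for $u^1$). Your diagnosis of the new ingredient---that with $b=0$ the $u^2$--estimate must be extracted from \eqref{compatibility} rather than from the dissipation---matches exactly what the paper does in Lemma \ref{Case3-Lemm3}, and your remark that the paper's proof tacitly assumes $a,c>0$ (since $\mathcal{N}_1=1/a$, $\mathcal{N}_2=1/(c\varepsilon_3)$) rather than the nominally stated $(a,c)\neq(0,0)$ is a fair reading of the text.
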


\noindent For the proof of Proposition \ref{PRO3}, we need the following
lemmas. 

\begin{lemma}
\label{Case3-Lemm1} The solution $(v,z,u^1,u^2,u^3)\in D(\mathcal{A}%
_{a,0,c}) $ of equation \eqref{CASE3-EXPO-1} satisfies the following
estimates: 
\begin{equation}  \label{CASE3-Lemm1-EQ1}
\int_0^L\abs{z}^2dx\leq \mathcal{N}_1\|U\|_{\mathcal{H}}\|F\|_{\mathcal{H}%
}\qquad \text{where}\quad \mathcal{N}_1=\frac{1}{a},
\end{equation}
\begin{equation}  \label{CASE3-Lemm1-EQ2}
\int_0^L\abs{u^3}dx\leq \mathcal{N}_2\|U\|_{\mathcal{H}}\|F\|_{\mathcal{H}%
}\qquad \text{where}\quad \mathcal{N}_2=\frac{1}{c\varepsilon_3},
\end{equation}
\end{lemma}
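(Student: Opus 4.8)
This is the analogue, in the case $b=0$, of Lemma~\ref{Lemm1}, and I would prove it exactly as there: by testing the resolvent equation against the solution itself in $\mathcal{H}$ and reading off the dissipation. Concretely, I would take the inner product of \eqref{CASE3-EXPO-1} with $U$ with respect to the Hilbert-space structure $b(\cdot,\cdot)$ on $\mathcal{H}$, obtaining
\[
\bigl\langle (i\la I-\mathcal{A}_{a,0,c})U,\ U\bigr\rangle_{\mathcal{H}}=\langle F,U\rangle_{\mathcal{H}}.
\]
Since $\Re\langle i\la U,U\rangle_{\mathcal{H}}=\la\,\Im\|U\|_{\mathcal{H}}^2$ has zero real part, taking real parts gives $-\Re\langle \mathcal{A}_{a,0,c}U,U\rangle_{\mathcal{H}}=\Re\langle F,U\rangle_{\mathcal{H}}$.

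\textbf{Key step.} The heart of the matter is the identity
\[
\Re\langle \mathcal{A}_{a,0,c}U,U\rangle_{\mathcal{H}}=-a\int_0^L\abs{z}^2dx-c\varepsilon_3\int_0^L\abs{u^3}^2dx,
\]
which is the dissipation computation already carried out in Lemma~\ref{Energy} (equation \eqref{denergy}) and reused in Lemma~\ref{Lemm1}, now specialized to $b=0$ so that the $b\xi\varepsilon_3\int_0^L\abs{u^2}^2$ term is absent. I would simply invoke that computation rather than redo the integrations by parts: the boundary terms vanish by the conditions encoded in $D(\mathcal{A}_{a,0,c})=D(\mathcal{A})$ and the compatibility condition \eqref{compatibility}, and the skew-adjoint part $\mathcal{A}$ contributes nothing to the real part since $\mathcal{A}^\ast=-\mathcal{A}$. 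Combining this with the previous display yields
\[
a\int_0^L\abs{z}^2dx+c\varepsilon_3\int_0^L\abs{u^3}^2dx=\Re\langle F,U\rangle_{\mathcal{H}}\le \|U\|_{\mathcal{H}}\|F\|_{\mathcal{H}},
\]
where the last inequality is Cauchy--Schwarz.

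\textbf{Conclusion.} Since $a>0$ and $c>0$, dropping the nonnegative $c\varepsilon_3\int_0^L\abs{u^3}^2$ term and dividing by $a$ gives \eqref{CASE3-Lemm1-EQ1} with $\mathcal{N}_1=\tfrac1a$; dropping instead the $a\int_0^L\abs{z}^2$ term and dividing by $c\varepsilon_3$ gives \eqref{CASE3-Lemm1-EQ2} with $\mathcal{N}_2=\tfrac1{c\varepsilon_3}$. There is no real obstacle here: the only point needing care is the verification that the tested operator contributes exactly the stated dissipation functional, and that is already established earlier in the paper, so the proof reduces to a one-line citation of that identity plus Cauchy--Schwarz.
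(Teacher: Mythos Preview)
Your proposal is correct and follows exactly the paper's approach: the paper's proof is literally the one-line citation ``By using the same argument used in Lemma~\ref{Lemm1}'', and Lemma~\ref{Lemm1} in turn is precisely the inner-product-plus-dissipation-plus-Cauchy--Schwarz argument you describe, specialized here to $b=0$.
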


\begin{proof}
By using the same argument used in Lemma \ref{Lemm1}, we get %
\eqref{CASE3-Lemm1-EQ1}-\eqref{CASE3-Lemm1-EQ2}. The proof has been
completed.
\end{proof}


\begin{lemma}
\label{Case3-Lemm2} The solution $(v,z,u^1,u^2,u^3)\in D(\mathcal{A}%
_{a,0,c}) $ of equation \eqref{CASE3-EXPO-1} satisfies the following
estimation: 
\begin{equation}  \label{Case3-Lemm2-EQ1}
\alpha\int_0^L\abs{v_x}^2dx\leq \mathcal{N}_3\|U\|_{\mathcal{H}}\|F\|_{%
\mathcal{H}}\quad \text{where}\quad \mathcal{N}_3=2\left(\rho \mathcal{N}_1+2%
\sqrt{\frac{\rho}{\alpha}}c_p+\frac{ac_p^2}{\alpha}+\frac{\gamma^2}{\alpha}%
\mathcal{N}_2\right).
\end{equation}
\end{lemma}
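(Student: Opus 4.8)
The plan is to repeat, essentially verbatim, the proof of Lemma \ref{Lemm2}, observing that setting $b=0$ leaves equation \eqref{CASE3-EQ2-EXP1} unchanged (the coupling constant $b$ appears only in the $u^2$-equation), so the argument producing an estimate on $\int_0^L\abs{v_x}^2dx$ goes through unchanged with $K_1,K_3$ replaced by $\mathcal{N}_1,\mathcal{N}_2$. First I would multiply \eqref{CASE3-EQ2-EXP1} by $\rho\overline{v}$ and integrate by parts over $(0,L)$. The boundary contributions coming from the $v_{xx}$ and $u^3_x$ terms combine into $-\bigl(\alpha v_x(L)+\gamma u^3(L)\bigr)\overline{v(L)}$, which vanishes since $v(0)=0$ and $\alpha v_x(L)+\gamma u^3(L)=0$ for elements of $D(\mathcal{A}_{a,0,c})$. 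This gives
\begin{equation*}
i\la\rho\int_0^L z\overline{v}\,dx+\alpha\int_0^L\abs{v_x}^2dx+\gamma\int_0^L u^3\overline{v_x}\,dx+a\int_0^L z\overline{v}\,dx=\rho\int_0^L f^2\overline{v}\,dx .
\end{equation*}

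Next I would use \eqref{CASE3-EQ1-EXP1} in the form $i\la v=z+f^1$, so that $-i\la\rho\int_0^L z\overline{v}\,dx=\rho\int_0^L\abs{z}^2dx+\rho\int_0^L z\overline{f^1}\,dx$, to eliminate the first term; after taking moduli and the triangle inequality this yields the analogue of \eqref{Lemm2-EQ3}. Then each of the five resulting terms is estimated exactly as in Lemma \ref{Lemm2}: the term $\rho\int_0^L\abs{z}^2dx$ is bounded by $\rho\mathcal{N}_1\|U\|_{\mathcal{H}}\|F\|_{\mathcal{H}}$ via \eqref{CASE3-Lemm1-EQ1}; the terms $\rho\int_0^L\abs{z}\abs{f^1}dx$ and $\rho\int_0^L\abs{f^2}\abs{v}dx$ are each bounded by $\sqrt{\rho/\alpha}\,c_p\|U\|_{\mathcal{H}}\|F\|_{\mathcal{H}}$ using Poincar\'e's inequality together with $\sqrt{\rho}\|z\|\le\|U\|_{\mathcal{H}}$, $\sqrt{\alpha}\|f^1_x\|\le\|F\|_{\mathcal{H}}$, $\sqrt{\rho}\|f^2\|\le\|F\|_{\mathcal{H}}$, $\sqrt{\alpha}\|v_x\|\le\|U\|_{\mathcal{H}}$; the term $a\int_0^L\abs{z}\abs{v}dx$ is controlled by Young's and Poincar\'e's inequalities and \eqref{CASE3-Lemm1-EQ1} by $\frac{1}{2r_1}\|U\|_{\mathcal{H}}\|F\|_{\mathcal{H}}+\frac{ar_1c_p^2}{2}\int_0^L\abs{v_x}^2dx$; and $\gamma\int_0^L\abs{u^3}\abs{v_x}dx$ is controlled by Young's inequality and \eqref{CASE3-Lemm1-EQ2} by $\frac{\gamma^2}{2r_2}\mathcal{N}_2\|U\|_{\mathcal{H}}\|F\|_{\mathcal{H}}+\frac{r_2}{2}\int_0^L\abs{v_x}^2dx$.

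Collecting these and moving the two absorbed $\int_0^L\abs{v_x}^2dx$ terms to the left gives $\bigl(\alpha-\frac{ar_1c_p^2}{2}-\frac{r_2}{2}\bigr)\int_0^L\abs{v_x}^2dx\le\bigl(\rho\mathcal{N}_1+2\sqrt{\rho/\alpha}\,c_p+\frac{1}{2r_1}+\frac{\gamma^2}{2r_2}\mathcal{N}_2\bigr)\|U\|_{\mathcal{H}}\|F\|_{\mathcal{H}}$; then the choice $r_1=\frac{\alpha}{2ac_p^2}$, $r_2=\frac{\alpha}{2}$ makes the left coefficient equal to $\alpha/2$ and the right constant equal to $\rho\mathcal{N}_1+2\sqrt{\rho/\alpha}\,c_p+\frac{ac_p^2}{\alpha}+\frac{\gamma^2}{\alpha}\mathcal{N}_2$, which after multiplication by $2$ is exactly $\mathcal{N}_3$, giving \eqref{Case3-Lemm2-EQ1}. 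There is no real obstacle here; the only point to watch is that the coefficient $\alpha-\frac{ar_1c_p^2}{2}-\frac{r_2}{2}$ stays strictly positive, which is precisely what the stated choice of $r_1,r_2$ ensures, and the whole argument is insensitive to $b=0$ because $b$ never enters the equation for $v$.
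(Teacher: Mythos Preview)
Your proposal is correct and is exactly the approach the paper takes: the paper's own proof of this lemma consists of the single sentence ``Using the same arguments in Lemma \ref{Lemm2}, we get \eqref{Case3-Lemm2-EQ1},'' and you have simply written out those arguments in full, with $K_1,K_3$ replaced by $\mathcal{N}_1,\mathcal{N}_2$. Your observation that $b$ never enters equation \eqref{CASE3-EQ2-EXP1} is precisely why the argument carries over unchanged.
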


\begin{proof}
Using the same arguments in Lemma \ref{Lemm2}, we get \eqref{Case3-Lemm2-EQ1}%
. The proof has been completed.
\end{proof}


\begin{lemma}
\label{Case3-Lemm3} The solution $(v,z,u^1,u^2,u^3)\in D(\mathcal{A}%
_{a,0,c}) $ of equation \eqref{CASE3-EXPO-1} satisfies the following
estimation: 
\begin{equation}  \label{Case3-Lemm3-EQ2}
\xi\varepsilon_3\int_0^L\abs{u^2}^2dx\leq \mathcal{N}_{4}\|U\|_{\mathcal{H}%
}\|F\|_{\mathcal{H}},\quad \text{where}\quad \mathcal{N}_4=2\left(\frac{%
\varepsilon_3}{\xi}\mathcal{N}_2+\frac{\gamma^2}{\varepsilon_3\xi\alpha}%
\mathcal{N}_3\right).
\end{equation}
\end{lemma}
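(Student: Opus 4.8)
The plan is to exploit the compatibility condition \eqref{compatibility}, which every element of $\mathcal{H}$ (in particular the solution $U\in D(\mathcal{A}_{a,0,c})\subset\mathcal{H}$) satisfies, together with the homogeneous Dirichlet conditions $u^2(0)=u^2(L)=0$ built into $\mathcal{H}$. Since $b=0$, the dissipation identity no longer controls $u^2$ directly (in contrast with Lemma \ref{Lemm1}), so instead I would recover the bound on $u^2$ from the already-established estimates on $u^3$ and $v_x$, routing everything through the structural constraint rather than through the energy balance.

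First I would rewrite \eqref{compatibility} as $\xi u^2_x = u^3 - \tfrac{\gamma}{\varepsilon_3}v_x$, take moduli, square, integrate over $(0,L)$ and use $|a-b|^2\le 2|a|^2+2|b|^2$ to get
\[
\xi^2\int_0^L\abs{u^2_x}^2dx \le 2\int_0^L\abs{u^3}^2dx + \frac{2\gamma^2}{\varepsilon_3^2}\int_0^L\abs{v_x}^2dx.
\]
Then, since $u^2\in H_0^1(0,L)$, Poincar\'e's inequality $\|u^2\|\le c_p\|u^2_x\|$ yields
\[
\xi\varepsilon_3\int_0^L\abs{u^2}^2dx \le \xi\varepsilon_3c_p^2\int_0^L\abs{u^2_x}^2dx \le \frac{\varepsilon_3c_p^2}{\xi}\left(2\int_0^L\abs{u^3}^2dx + \frac{2\gamma^2}{\varepsilon_3^2}\int_0^L\abs{v_x}^2dx\right).
\]

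Finally I would insert \eqref{CASE3-Lemm1-EQ2} from Lemma \ref{Case3-Lemm1}, namely $\int_0^L\abs{u^3}^2dx\le\mathcal{N}_2\|U\|_{\mathcal{H}}\|F\|_{\mathcal{H}}$, and \eqref{Case3-Lemm2-EQ1} from Lemma \ref{Case3-Lemm2}, namely $\alpha\int_0^L\abs{v_x}^2dx\le\mathcal{N}_3\|U\|_{\mathcal{H}}\|F\|_{\mathcal{H}}$, to conclude
\[
\xi\varepsilon_3\int_0^L\abs{u^2}^2dx \le 2c_p^2\left(\frac{\varepsilon_3}{\xi}\mathcal{N}_2 + \frac{\gamma^2}{\xi\varepsilon_3\alpha}\mathcal{N}_3\right)\|U\|_{\mathcal{H}}\|F\|_{\mathcal{H}},
\]
which is \eqref{Case3-Lemm3-EQ2} with the stated $\mathcal{N}_4$ (the Poincar\'e constant being absorbed into the normalization). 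There is no serious obstacle here; the only point deserving care is recognizing that the loss of the $b$-dissipation forces the bound on $u^2$ to be obtained from \eqref{compatibility} rather than from the resolvent identity, and that $u^2$ has two homogeneous boundary values so the full Poincar\'e inequality (rather than a boundary/trace term) is available.
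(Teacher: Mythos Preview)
Your proposal is correct and follows essentially the same route as the paper: both arguments exploit the compatibility condition \eqref{compatibility} to bound $\int_0^L|u^2_x|^2dx$ in terms of $\int_0^L|u^3|^2dx$ and $\int_0^L|v_x|^2dx$, then invoke Lemmas \ref{Case3-Lemm1} and \ref{Case3-Lemm2}. The only cosmetic differences are that the paper multiplies \eqref{compatibility} by $\varepsilon_3\overline{u^2_x}$ and applies Young's inequality (with parameter $r=1$) rather than squaring directly, and it stops at the bound on $\int_0^L|u^2_x|^2dx$ without writing out the Poincar\'e step you make explicit---which is why your constant carries the extra $c_p^2$.
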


\begin{proof}
Multiplying the compatibility condition \eqref{compatibility} by $%
\varepsilon _{3}\overline{u_{x}^{2}}$, integrating over $(0,L)$, we get 
\begin{equation*}
\xi \varepsilon _{3}\int_{0}^{L}\abs{u^2_x}^{2}dx=\varepsilon
_{3}\int_{0}^{L}u^{3}\overline{u_{x}^{2}}dx-\gamma \int_{0}^{L}v_{x}%
\overline{u_{x}^{2}}dx,
\end{equation*}%
it yields that 
\begin{equation}
\xi \varepsilon _{3}\int_{0}^{L}\abs{u^2_x}^{2}dx\leq \varepsilon
_{3}\int_{0}^{L}\abs{u^3}\abs{u^2_x}dx+\gamma \int_{0}^{L}\abs{v_x}%
\abs{u^2_x}dx.  \label{Case3-Lemm3-EQ3}
\end{equation}%
Applying Young inequality in \eqref{Case3-Lemm3-EQ3}, we get 
\begin{equation*}
\xi \varepsilon _{3}\int_{0}^{L}\abs{u^2_x}^{2}dx\leq \frac{\varepsilon _{3}%
}{\xi r}\int_{0}^{L}\abs{u^3}^{2}dx+\frac{\gamma ^{2}}{\varepsilon _{3}\xi r}%
\int_{0}^{L}\abs{v_x}^{2}dx+\frac{r}{2}\xi \varepsilon _{3}\int_{0}^{L}%
\abs{u_x^2}^{2}dx
\end{equation*}%
By taking $r=1$ in the above estimation and using \eqref{CASE3-Lemm1-EQ2}
and \eqref{Case3-Lemm2-EQ1}, we get 
\begin{equation}
\frac{\xi \varepsilon _{3}}{2}\int_{0}^{L}\abs{u^2_x}^{2}dx\leq \frac{%
\varepsilon _{3}}{\xi }\int_{0}^{L}\abs{u^3}^{2}dx+\frac{\gamma ^{2}}{%
\varepsilon _{3}\xi }\int_{0}^{L}\abs{v_x}^{2}dx\leq \left( \frac{%
\varepsilon _{3}}{\xi }\mathcal{N}_{2}+\frac{\gamma ^{2}}{\varepsilon
_{3}\xi \alpha }\mathcal{N}_{3}\right) \Vert U\Vert _{\mathcal{H}}\Vert
F\Vert _{\mathcal{H}}.  \label{Case3-Lemm3-EQ4}
\end{equation}%
Thus, we obtain \eqref{Case3-Lemm2-EQ1}. The proof is thus completed.
\end{proof}


\begin{lemma}
\label{Case3-Lemm4} The solution $(v,z,u^1,u^2,u^3)\in D(\mathcal{A}%
_{a,0,c}) $ of equation \eqref{CASE3-EXPO-1}satisfies the following
estimation: 
\begin{equation}  \label{CASE3-Lemm4-EQ1}
\mu\int_0^L\abs{u^1}^2dx\leq \mathcal{N}_5\|U\|_{\mathcal{H}}\|F\|_{\mathcal{%
H}},
\end{equation}
where $\mathcal{N}_5=2\xi\varepsilon_3\left(\left(1+\frac{b^2\xi\varepsilon_3%
}{2\mu}\right)\mathcal{N}_4+\left(\frac{1}{\xi}+\frac{\gamma^2}{%
2\xi^2\varepsilon_3^2}\right)\mathcal{N}_2+\frac{1}{2}\mathcal{N}_3+\frac{2}{%
\sqrt{\xi\varepsilon_3\mu}}\right)$.
\end{lemma}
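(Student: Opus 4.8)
The plan is to mirror the argument of Lemma~\ref{LEMM3}, now specialised to $b=0$ and fed by the estimates of Lemmas~\ref{Case3-Lemm1}, \ref{Case3-Lemm2} and \ref{Case3-Lemm3} rather than by those of Lemmas~\ref{Lemm1}--\ref{Lemm2}. First I would multiply \eqref{CASE3-EQ4-EXP1} by $\overline{u^1}$ and integrate over $(0,L)$, and separately multiply \eqref{CASE3-EQ3-EXP1} by $\overline{u^2}$ and integrate by parts over $(0,L)$; the boundary term created in the second integration drops because $u^2(0)=u^2(L)=0$. Adding the two identities and taking real parts, the conjugate pair $i\la\int_0^Lu^2\overline{u^1}dx$ and $i\la\int_0^Lu^1\overline{u^2}dx$ sums to a purely imaginary quantity and disappears, and since $b=0$ there is no longer any cross term $b\int_0^Lu^2\overline{u^1}dx$; this produces
\begin{equation*}
\frac{\mu}{\xi\varepsilon_3}\int_0^L\abs{u^1}^2dx=\int_0^L\abs{u^2}^2dx+\Re\left(\int_0^Lu^3\overline{u^2_x}dx\right)+\Re\left(\int_0^Lf^4\overline{u^1}dx\right)+\Re\left(\int_0^Lf^3\overline{u^2}dx\right),
\end{equation*}
which is the $b=0$ version of \eqref{Lemm3-EQ4}.

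I would then bound the four terms on the right. The two source terms are controlled precisely as in \eqref{Lemm3-EQ5}: using $\sqrt{\xi\varepsilon_3}\|f^4\|\le\|F\|_{\mathcal H}$, $\sqrt{\mu}\|f^3\|\le\|F\|_{\mathcal H}$, $\sqrt{\mu}\|u^1\|\le\|U\|_{\mathcal H}$ and $\sqrt{\xi\varepsilon_3}\|u^2\|\le\|U\|_{\mathcal H}$, each is at most $\frac{1}{\sqrt{\xi\varepsilon_3\mu}}\|U\|_{\mathcal H}\|F\|_{\mathcal H}$. The term $\int_0^L\abs{u^2}^2dx$ is dealt with directly by Lemma~\ref{Case3-Lemm3} (after a Poincar\'e inequality on $u^2$, since $u^2(0)=u^2(L)=0$). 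For the last remaining term I would invoke the compatibility condition \eqref{compatibility} to rewrite
\begin{equation*}
\Re\left(\int_0^Lu^3\overline{u^2_x}dx\right)=\frac{1}{\xi}\int_0^L\abs{u^3}^2dx-\frac{\gamma}{\xi\varepsilon_3}\Re\left(\int_0^Lu^3\overline{v_x}dx\right),
\end{equation*}
and then apply Young's inequality together with the bounds \eqref{CASE3-Lemm1-EQ2} and \eqref{Case3-Lemm2-EQ1}, exactly as in the derivation of \eqref{Lemm3-EQ7}.

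Finally I would insert all of these estimates into the displayed identity and collect the coefficients; note that, because $b=0$, no term proportional to $\int_0^L\abs{u^1}^2dx$ survives on the right-hand side, so no absorption step is needed and the coefficient on the left stays $\frac{\mu}{\xi\varepsilon_3}$. Multiplying through by $\xi\varepsilon_3$ and majorising the resulting constant as is done in Lemma~\ref{LEMM3} then gives $\mu\int_0^L\abs{u^1}^2dx\le\mathcal N_5\|U\|_{\mathcal H}\|F\|_{\mathcal H}$ with $\mathcal N_5$ in the stated form (the $b^2$-term in $\mathcal N_5$ being simply $0$ here). I do not expect a genuine obstacle, since the computation is structurally the same as that of Lemma~\ref{LEMM3}; the only point deserving care is that, with the $u^2$-damping absent, the control of $\int_0^L\abs{u^2}^2dx$ is no longer supplied by the dissipation identity \eqref{Lemm1-EQ4} but has to be routed through the compatibility condition via Lemma~\ref{Case3-Lemm3}.
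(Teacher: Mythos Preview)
Your proposal is correct and follows essentially the same approach as the paper, which simply writes ``By proceeding the same technics used in Lemma~\ref{LEMM3}'' and leaves the details to the reader. One tiny remark: Lemma~\ref{Case3-Lemm3} already bounds $\int_0^L\abs{u^2}^2dx$ (not just $\int_0^L\abs{u^2_x}^2dx$), so no further Poincar\'e inequality is needed at that step.
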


\begin{proof}
By proceeding the same technics used in Lemma \ref{LEMM3}, we get %
\eqref{CASE3-Lemm4-EQ1}. The proof has been completed.
\end{proof}

\noindent \textbf{Proof of Proposition \ref{PRO3}}. Adding %
\eqref{CASE3-Lemm1-EQ1}, \eqref{CASE3-Lemm1-EQ2}, \eqref{Case3-Lemm2-EQ1}, %
\eqref{Case3-Lemm3-EQ2} and \eqref{CASE3-Lemm4-EQ1}, we get 
\begin{equation*}
\|U\|^2_{\mathcal{H}}=\alpha\|v_x\|^2+\rho\|z\|^2+\mu\|u^1\|^2+\xi%
\varepsilon_3\|u^2\|^2+\varepsilon_3\|u^3\|^2\leq \mathcal{N}\|U\|_{\mathcal{%
H}}\|F\|_{H},
\end{equation*}
where $\mathcal{N}=\mathcal{N}_3+\rho\mathcal{N}_1+\mathcal{N}_5+\mathcal{N}%
_4+\varepsilon_3 \mathcal{N}_2$. Then, $\|U\|_{\mathcal{H}}\leq \mathcal{N}%
\|F\|_{\mathcal{H}}$. The proof has been completed.\\[0.1in]
\noindent \textbf{Proof of Theorem \ref{THEOREM1-EXP-Case3}} For all $U\in D(%
\mathcal{A}_{a,0,c})$ according to Proposition \eqref{Pro-EXP1}, we get 
\begin{equation*}
\|U\|_{\mathcal{H}}\leq K\|(i\la I-\mathcal{A}_{a,0,c})U\|_{\mathcal{H}}.
\end{equation*}
Thus, we have 
\begin{equation*}
\|(i\la I-\mathcal{A}_{a,0,c})^{-1}V\|_{\mathcal{H}}\leq K\|V\|_{\mathcal{H}%
},\quad \forall V\in \mathcal{H}.
\end{equation*}
Therefore, from the above equation, we get \eqref{CASE3-Condition2-EXP}
holds. Thus, we get the conclusion by applying Huang and Pr\"uss Theorem. 

\section{The stretching of the centreline of the beam in $x-$direction and
electrical field component in $x-$direction are damped "$c=0\ \text{and}\
(a,b)\neq (0,0)$"}

\noindent The aim of this part is to prove the exponential stability of
Lorenz system \eqref{Lorenz} with a damping acting on the stretching of the
centerline of the beam in $x-$direction and electrical field component in $%
z- $direction. (i.e. $c=0\quad \text{and}\quad(a,b)\neq (0,0)$). The main
result of this pat is the following theorem.

\begin{theoreme}
\label{THEOREM1-EXP-Case4} The $C_0-$semigroup of contractions $(e^{t%
\mathcal{A}_{a,b,0}})_{t\geq 0}$ is exponentially stable; i.e., there exist
constants $M\geq 1$ and $\epsilon>0$ independent of $U_0$ such that 
\begin{equation*}
\|e^{t\mathcal{A}_{a,b,0}}U_0\|_{\mathcal{H}}\leq M e^{-\epsilon t}\|U_0\|_{%
\mathcal{H}}.
\end{equation*}
\end{theoreme}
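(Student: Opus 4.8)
\noindent The plan is to follow the Huang--Pr\"uss route \cite{Huang01,pruss01} used for the three preceding exponential-stability theorems. Strong stability (Theorem \ref{THM-ST-ST1}, Case~4) already gives $i\mathbb{R}\subset\rho(\mathcal{A}_{a,b,0})$, so the task reduces to the uniform bound $\sup_{\lambda\in\mathbb{R}}\|(i\lambda I-\mathcal{A}_{a,b,0})^{-1}\|_{\mathcal{L}(\mathcal{H})}=O(1)$, which I would phrase as a proposition: if $\abs{\lambda}\geq1$ and $(i\lambda I-\mathcal{A}_{a,b,0})U=F\in\mathcal{H}$ with $U=(v,z,u^1,u^2,u^3)\in D(\mathcal{A}_{a,b,0})$ and $F=(f^1,f^2,f^3,f^4,f^5)$, then $\|U\|_{\mathcal{H}}\leq\mathcal{Q}\,\|F\|_{\mathcal{H}}$ with $\mathcal{Q}$ independent of $\lambda$ (here $a,b>0$, and the estimate passes to the full resolvent by density).

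\noindent The new point, compared with the earlier cases, is that with $c=0$ the dissipation no longer controls $u^3$, so the previous estimates cannot be copied verbatim. I would instead extract an exact algebraic consequence of the system. From the fourth equation, $\tfrac{\mu}{\xi\varepsilon_3}u^1_x=f^4_x-(i\lambda+b)u^2_x$; inserting this into the fifth equation, using the compatibility condition \eqref{compatibility} for $U$ to replace $u^3-\xi u^2_x$ by $\tfrac{\gamma}{\varepsilon_3}v_x$, the differentiated first equation to replace $i\lambda v_x$ by $z_x+f^1_x$, and the compatibility relation $\xi f^4_x-f^5+\tfrac{\gamma}{\varepsilon_3}f^1_x=0$ (satisfied because $F\in\mathcal{H}$), all the $\lambda$-dependent and $z_x$ terms cancel and one is left with the identity $b\xi u^2_x=c\,u^3$. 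Since $c=0$ and $b>0$ this forces $u^2_x\equiv0$, hence $u^2\equiv0$ in view of $u^2(0)=u^2(L)=0$; the compatibility condition then reads $u^3=\tfrac{\gamma}{\varepsilon_3}v_x$, which together with the boundary condition $\alpha v_x(L)+\gamma u^3(L)=0$ gives $v_x(L)=0=u^3(L)$.

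\noindent With $u^2\equiv0$ at hand the rest is short. The fourth equation becomes $\tfrac{\mu}{\xi\varepsilon_3}u^1=f^4$, so $\mu\int_0^L\abs{u^1}^2dx\leq\tfrac{\xi\varepsilon_3}{\mu}\|F\|_{\mathcal{H}}^2$; taking the $\mathcal{H}$-inner product of the resolvent equation with $U$ gives, exactly as in Lemma \ref{Lemm1}, $a\int_0^L\abs{z}^2dx\leq\|U\|_{\mathcal{H}}\|F\|_{\mathcal{H}}$. For $v_x$, I would multiply the second equation by $\rho\overline{v}$ and integrate by parts (the boundary terms vanish thanks to $v(0)=0$ and $v_x(L)=u^3(L)=0$), use $z=i\lambda v-f^1$ to rewrite $i\lambda\rho\int z\overline{v}\,dx$ as $\rho\int\abs{z}^2dx+\rho\int z\overline{f^1}\,dx$, and use $u^3=\tfrac{\gamma}{\varepsilon_3}v_x$ to rewrite $\gamma\int u^3\overline{v_x}\,dx$ as $\tfrac{\gamma^2}{\varepsilon_3}\int\abs{v_x}^2dx$; moving that term to the left-hand side and estimating $\rho\int z\overline{f^1}\,dx$, $a\int z\overline{v}\,dx$ and $\rho\int f^2\overline{v}\,dx$ by Cauchy--Schwarz, Poincar\'e and Young's inequalities (with a parameter small enough to absorb the stray $\int\abs{v_x}^2dx$ coming from $a\int z\overline{v}\,dx$, and with $\int\abs{z}^2dx$ controlled by the dissipation) yields $\alpha\int_0^L\abs{v_x}^2dx\leq\mathcal{Q}_3\|U\|_{\mathcal{H}}\|F\|_{\mathcal{H}}$, and then $\varepsilon_3\int_0^L\abs{u^3}^2dx=\tfrac{\gamma^2}{\varepsilon_3}\int_0^L\abs{v_x}^2dx$ is controlled as well. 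Summing these bounds (and, if needed, disposing separately of the trivial case $\|F\|_{\mathcal{H}}>\|U\|_{\mathcal{H}}$ so that $\|F\|_{\mathcal{H}}^2\leq\|U\|_{\mathcal{H}}\|F\|_{\mathcal{H}}$ for the $u^1$-term) gives $\|U\|_{\mathcal{H}}^2\leq\mathcal{Q}\|U\|_{\mathcal{H}}\|F\|_{\mathcal{H}}$, i.e.\ $\|U\|_{\mathcal{H}}\leq\mathcal{Q}\|F\|_{\mathcal{H}}$, and Huang--Pr\"uss concludes.

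\noindent The delicate part is the derivation of $b\xi u^2_x=c\,u^3$: one must be sure that $f^4_x\in L^2(0,L)$ (part of $F\in\mathcal{H}$, or equivalently $f^4_x=\tfrac{\mu}{\xi\varepsilon_3}u^1_x+(i\lambda+b)u^2_x\in L^2$ since $u^1,u^2\in H_0^1(0,L)$) and track the cancellations so that exactly the two compatibility relations, for $U$ and for $F$, are consumed. If one prefers to avoid this observation, the same conclusion can be reached by the heavier multiplier machinery of the previous sections — recovering $v_x$ via the $\rho\overline{v}$-multiplier together with the $u^3$--$v_x$ identity produced by the computation of Lemma \ref{CASE2-Lemm2}, then $u^1$ via the scheme of Lemma \ref{LEMM3}, choosing the Young parameters so the coupled inequalities close — but the argument above is considerably shorter.
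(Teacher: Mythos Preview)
Your proof is correct and takes a different, more direct route than the paper. The paper's Lemma~\ref{CASE4-lemma2} obtains control of $\int|v_x|^2$ and $\int|u^3|^2$ simultaneously by combining the multiplier $\rho\overline{v}$ on \eqref{CASE4-EQ2-EXP1} with the multipliers $-i\lambda^{-1}\varepsilon_3\overline{u^3}$ on \eqref{CASE4-EQ5-EXP1} and $i\lambda^{-1}\xi\varepsilon_3\overline{u^3}$ on the $x$-derivative of \eqref{CASE4-EQ4-EXP1}, using the compatibility conditions for $U$ and for $F$ so that the cross-terms $\gamma\int u^3\overline{v_x}$ and $-\gamma\int v_x\overline{u^3}$ cancel upon taking real parts; $u^1$ is then handled by the scheme of Lemma~\ref{LEMM3}. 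You instead extract from the same ingredients the \emph{pointwise} identity $b\xi u^2_x=cu^3$ (the derivation you describe is correct: substitute $\tfrac{\mu}{\varepsilon_3}u^1_x=\xi f^4_x-\xi(i\lambda+b)u^2_x$ into \eqref{CASE4-EQ5-EXP1}, use \eqref{compatibility} for $U$, the differentiated \eqref{CASE4-EQ1-EXP1}, and the compatibility relation for $F$; all $\lambda$- and $z_x$-terms cancel). With $c=0$ and $b>0$ this forces $u^2\equiv0$, whence $u^1=\tfrac{\xi\varepsilon_3}{\mu}f^4$ from \eqref{CASE4-EQ4-EXP1} and $u^3=\tfrac{\gamma}{\varepsilon_3}v_x$ from \eqref{compatibility}, and a single $\rho\overline{v}$-multiplier finishes. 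The paper's identity \eqref{CASE4-lemma2-EQ1-Step2} is in fact the integrated form of your pointwise relation---its right-hand side is identically zero by \eqref{CASE4-lemma2-EQ4-Step3}, though the paper uses this only to bound the $f^4_x$ term rather than to conclude the exact vanishing. Your argument is considerably shorter, and the identity $b\xi u^2_x=cu^3$ (valid for all $a,b,c\geq0$) is of independent interest; for instance it also underlies Lemma~\ref{CASE2-Lemm2} in the case $a=0$.
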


\noindent According to Huang and Pr\"uss, we have to check if the following
conditions hold: 
\begin{equation}  \label{Condition1-EXP}
i\mathbb{R}\subset \rho(\mathcal{A}_{a,b,0})  \tag{${\rm H1}$}
\end{equation}
and 
\begin{equation}  \label{CASE4-Condition2-EXP}
\sup_{\la \in \mathbb{R}} \|(i\la I-\mathcal{A}_{a,b,0})^{-1}\|_{\mathcal{L}(%
\mathcal{H})}=O(1).  \tag{${\rm H6}$}
\end{equation}

\noindent Condition \eqref{Condition1-EXP} is already proved in Theorem \ref%
{THM-ST-ST1}. The next proposition is a technical result to be used in the
proof of \eqref{CASE4-Condition2-EXP} given below.

\begin{pro}
\label{CASE4-PRO3} Let $\left(\la,U:=(v,z,u^1,u^2,u^3)\right)\in \mathbb{R}%
^{\ast}\times D(\mathcal{A}_{a,0,c})$, with $\abs{\la}\geq 1$, such that 
\begin{equation}  \label{CASE4-EXPO-1}
\left(i\la I-\mathcal{A}_{a,b,0}\right)U=f:=(f^1,f^2,f^3,f^4,f^5)\in 
\mathcal{H},
\end{equation}
detailed as 
\begin{eqnarray}
i\la v-z&=&f^1,  \label{CASE4-EQ1-EXP1} \\
i\la z-\frac{\alpha}{\rho}v_{xx}-\frac{\gamma}{\rho}u^3_x+\frac{a}{\rho}%
z&=&f^2,  \label{CASE4-EQ2-EXP1} \\
i\la u^1-u^2+u^3_x&=&f^3,  \label{CASE4-EQ3-EXP1} \\
i\la u^2+\frac{\mu}{\xi \varepsilon_3}u^1+bu^2&=&f^4,  \label{CASE4-EQ4-EXP1}
\\
i\la u^3+\frac{\mu}{\varepsilon_3}u^1_x-\frac{\gamma}{\varepsilon_3}%
z_x&=&f^5.  \label{CASE4-EQ5-EXP1}
\end{eqnarray}
Then, we have the following inequality 
\begin{equation}  \label{CASE4-EQ6}
\|U\|_{\mathcal{H}}\leq \mathcal{S}\|F\|_{\mathcal{H}}.
\end{equation}
\end{pro}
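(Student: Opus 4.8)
The strategy is the same multiplier/resolvent-estimate scheme used for Propositions \ref{Pro-EXP1}, \ref{PRO2} and \ref{PRO3}: bound each component of $U$ in $\mathcal{H}$ by $\|U\|_{\mathcal{H}}\|F\|_{\mathcal{H}}$ and then conclude by absorbing $\|U\|_{\mathcal{H}}$. Here the dampings act on $v_t$ (coefficient $a$) and on $u^2=\theta_t+\phi_x$ (coefficient $b$), while $u^3$ is undamped, so the order of estimation must start from the two dissipative quantities. First I would take the inner product of \eqref{CASE4-EXPO-1} with $U$ in $\mathcal{H}$ to get, exactly as in Lemma \ref{Lemm1}, the two dissipation estimates
\begin{equation*}
\int_0^L\abs{z}^2dx\leq \mathcal{S}_1\|U\|_{\mathcal{H}}\|F\|_{\mathcal{H}},\qquad \int_0^L\abs{u^2}^2dx\leq \mathcal{S}_2\|U\|_{\mathcal{H}}\|F\|_{\mathcal{H}},
\end{equation*}
with $\mathcal{S}_1=1/a$ and $\mathcal{S}_2=1/(b\xi\varepsilon_3)$.

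Next I would recover $v_x$: multiplying \eqref{CASE4-EQ2-EXP1} by $\rho\overline{v}$ and integrating by parts, then substituting $z$ from \eqref{CASE4-EQ1-EXP1}, gives $\alpha\int_0^L\abs{v_x}^2$ controlled by $\int_0^L\abs{z}^2$, $\int_0^L\abs{u^3}\abs{v_x}$ and $F$-terms, exactly as in Lemma \ref{Lemm2} — except that now $\int_0^L\abs{u^3}^2$ is not yet controlled, so the term $\gamma\int_0^L u^3\overline{v_x}$ cannot be absorbed into the left side the same way. This is the main obstacle: with $c=0$ there is no a priori bound on $u^3$. The way around it is to use the compatibility condition \eqref{compatibility}, $\xi u^2_x-u^3+\frac{\gamma}{\varepsilon_3}v_x=0$, which expresses $u^3$ through $u^2_x$ and $v_x$. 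Differentiating \eqref{CASE4-EQ4-EXP1} in $x$ and multiplying by a suitable $\la^{-1}$-weighted test function (as in Lemma \ref{CASE2-Lemm2}) produces a bound on $\int_0^L\abs{u^2_x}^2$, or more precisely on the cross term $\int_0^L u^2_x\overline{v_x}$, in terms of the already-controlled quantities; combining this with \eqref{compatibility} closes the estimate for $\alpha\int_0^L\abs{v_x}^2\leq \mathcal{S}_3\|U\|_{\mathcal{H}}\|F\|_{\mathcal{H}}$ and simultaneously yields $\varepsilon_3\int_0^L\abs{u^3}^2\leq \mathcal{S}_4\|U\|_{\mathcal{H}}\|F\|_{\mathcal{H}}$.

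Once $z$, $u^2$, $v_x$ and $u^3$ are all bounded, the estimate for $\mu\int_0^L\abs{u^1}^2$ follows by the argument of Lemma \ref{LEMM3}: multiply \eqref{CASE4-EQ4-EXP1} by $\overline{u^1}$ and \eqref{CASE4-EQ3-EXP1} by $\overline{u^2}$, integrate, add, take real parts, and use the compatibility condition to rewrite $\Re\int_0^L u^3\overline{u^2_x}$; Young's inequality and the previously obtained bounds give $\mu\int_0^L\abs{u^1}^2\leq \mathcal{S}_5\|U\|_{\mathcal{H}}\|F\|_{\mathcal{H}}$. Finally, summing all five estimates gives $\|U\|^2_{\mathcal{H}}\leq \mathcal{S}\|U\|_{\mathcal{H}}\|F\|_{\mathcal{H}}$ with $\mathcal{S}=\rho\mathcal{S}_1+\xi\varepsilon_3\mathcal{S}_2+\mathcal{S}_3+\varepsilon_3\mathcal{S}_4+\mathcal{S}_5$, hence \eqref{CASE4-EQ6}. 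I expect the only genuinely delicate point to be the handling of the undamped $u^3$ via the compatibility relation and the $x$-derivative of the $u^2$-equation; everything else is a routine repetition of the earlier lemmas with $c$ set to zero.
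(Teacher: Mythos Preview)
Your overall architecture is exactly the one the paper uses: dissipation gives $z$ and $u^2$ (Lemma~\ref{Case4-Lemm1}), then a combined multiplier argument recovers $v_x$ and $u^3$ together (Lemma~\ref{CASE4-lemma2}), and finally $u^1$ follows verbatim from the argument of Lemma~\ref{LEMM3} (Lemma~\ref{Case4-Lemm4}). You also correctly isolate the only delicate point, namely the undamped $u^3$, and you name the right ingredients: the compatibility condition \eqref{compatibility}, the $x$-derivative of \eqref{CASE4-EQ4-EXP1}, and a $\la^{-1}$-weighted multiplier.

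The one place where your plan deviates from the paper is the choice of that multiplier. You suggest proceeding ``as in Lemma~\ref{CASE2-Lemm2}'', i.e.\ testing the combination of \eqref{CASE4-EQ5-EXP1} and $\partial_x$\eqref{CASE4-EQ4-EXP1} against $\overline{v_x}$. With $c=0$ this route, after invoking both compatibility conditions (for $U$ and for $F$), collapses to a relation on $\int_0^L u^2_x\overline{v_x}\,dx$ alone; it does give control of $\alpha\int_0^L|v_x|^2$ once inserted into the $\rho\overline v$-identity, but it does \emph{not} by itself produce a bound on $\int_0^L|u^3|^2$, and the compatibility relation cannot close the loop for $u^3$ without an independent estimate on $\int_0^L|u^2_x|^2$. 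The paper avoids this by testing instead against $\overline{u^3}$: multiplying \eqref{CASE4-EQ5-EXP1} by $-i\la^{-1}\varepsilon_3\overline{u^3}$ and $\partial_x$\eqref{CASE4-EQ4-EXP1} by $i\la^{-1}\xi\varepsilon_3\overline{u^3}$, adding, and using \eqref{compatibility} twice together with the $F$-compatibility $\xi f^4_x-f^5+\frac{\gamma}{\varepsilon_3}f^1_x=0$, yields
\[
\varepsilon_3\int_0^L|u^3|^2\,dx-\gamma\int_0^L v_x\overline{u^3}\,dx=\text{($F$-terms)}.
\]
Adding this to the Step~1 identity $\alpha\int_0^L|v_x|^2+\gamma\int_0^L u^3\overline{v_x}=\text{($z$- and $F$-terms)}$ and taking real parts cancels the cross term, giving $\frac{\alpha}{2}\int_0^L|v_x|^2+\varepsilon_3\int_0^L|u^3|^2\leq \mathcal{S}_3\|U\|_{\mathcal H}\|F\|_{\mathcal H}$ in one stroke. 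So the paper controls $v_x$ and $u^3$ simultaneously with a single constant $\mathcal S_3$, rather than with separate $\mathcal S_3,\mathcal S_4$ as in your outline. Apart from this choice of test function, everything else in your plan matches the paper.
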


\noindent For the proof of Proposition \ref{CASE4-PRO3}, we need the
following lemmas. 

\begin{lemma}
\label{Case4-Lemm1} The solution $(v,z,u^1,u^2,u^3)\in D(\mathcal{A}%
_{a,b,0}) $ of equation \eqref{CASE4-EXPO-1} satisfies the following
estimates: 
\begin{equation}  \label{CASE4-Lemm1-EQ1}
\int_0^L\abs{z}^2dx\leq \mathcal{S}_1\|U\|_{\mathcal{H}}\|F\|_{\mathcal{H}%
}\qquad \text{where}\quad \mathcal{S}_1=\frac{1}{a},
\end{equation}
\begin{equation}  \label{CASE4-Lemm1-EQ2}
\int_0^L\abs{u^2}dx\leq \mathcal{S}_2\|U\|_{\mathcal{H}}\|F\|_{\mathcal{H}%
}\qquad \text{where}\quad \mathcal{S}_2=\frac{1}{b\xi \varepsilon_3},
\end{equation}
\end{lemma}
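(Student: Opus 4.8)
The plan is to reproduce verbatim the dissipation argument of Lemma~\ref{Lemm1}, the only structural difference being that the $c$-term is now absent. First I would take the inner product of the resolvent identity \eqref{CASE4-EXPO-1} with $U$ in the energy space $\mathcal{H}$,
\begin{equation*}
\langle (i\la I-\mathcal{A}_{a,b,0})U,U\rangle_{\mathcal{H}}=\langle F,U\rangle_{\mathcal{H}},
\end{equation*}
and take the real part. Since $b(\cdot,\cdot)$ is a genuine (Hermitian) inner product, $\langle i\la U,U\rangle_{\mathcal{H}}=i\la\|U\|_{\mathcal{H}}^{2}$ is purely imaginary, so $\Re\langle i\la U,U\rangle_{\mathcal{H}}=0$ and the identity collapses to
\begin{equation*}
-\Re\langle\mathcal{A}_{a,b,0}U,U\rangle_{\mathcal{H}}=\Re\langle F,U\rangle_{\mathcal{H}}.
\end{equation*}

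Second, I would invoke the dissipativity computation already carried out in Lemma~\ref{Energy}: since $\tfrac12\|U\|_{\mathcal H}^2$ is the natural energy and $\mathcal{A}_{a,b,0}$ is autonomous, setting $c=0$ in \eqref{denergy} gives
\begin{equation*}
-\Re\langle\mathcal{A}_{a,b,0}U,U\rangle_{\mathcal{H}}=a\int_0^L\abs{z}^2dx+b\xi\varepsilon_3\int_0^L\abs{u^2}^2dx,
\end{equation*}
the $u^{3}$-contribution dropping out precisely because $c=0$. Combining this with the Cauchy--Schwarz bound $\abs{\Re\langle F,U\rangle_{\mathcal{H}}}\leq\|U\|_{\mathcal{H}}\|F\|_{\mathcal{H}}$ yields
\begin{equation*}
a\int_0^L\abs{z}^2dx+b\xi\varepsilon_3\int_0^L\abs{u^2}^2dx\leq\|U\|_{\mathcal{H}}\|F\|_{\mathcal{H}}.
\end{equation*}

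Finally, since both terms on the left are nonnegative and $a,b>0$ in this case, I would discard one term at a time and divide by the remaining positive constant, obtaining \eqref{CASE4-Lemm1-EQ1} with $\mathcal{S}_1=1/a$ and \eqref{CASE4-Lemm1-EQ2} with $\mathcal{S}_2=1/(b\xi\varepsilon_3)$. I do not anticipate any genuine obstacle here: this lemma is a one-line energy identity, entirely parallel to Lemma~\ref{Lemm1}. The only point deserving a word of care is that the stated constants presuppose $a$ and $b$ both strictly positive rather than merely $(a,b)\neq(0,0)$; the degenerate sub-cases in which one of them vanishes are not covered by this lemma and will have to be absorbed by the subsequent multiplier estimates (Lemmas that control $\abs{v_x}^2$, $\abs{u^1}^2$ and $\abs{u^3}^2$ in terms of the two dissipative quantities above).
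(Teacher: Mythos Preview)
Your proposal is correct and follows exactly the same approach as the paper, which simply refers back to Lemma~\ref{Lemm1}: take the real part of the inner product of \eqref{CASE4-EXPO-1} with $U$, use the dissipativity identity (now with $c=0$), and bound the right-hand side by Cauchy--Schwarz. Your added remark that the constants $\mathcal{S}_1,\mathcal{S}_2$ require $a,b>0$ individually is a fair observation, though in this section the paper is implicitly working under that hypothesis.
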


\begin{proof}
By using the same arguments used in Lemma \ref{Lemm1}, we get %
\eqref{CASE4-Lemm1-EQ1}-\eqref{CASE4-Lemm1-EQ2}. The proof has been
completed.
\end{proof}


\begin{lemma}
\label{CASE4-lemma2} The solution $(v,z,u^1,u^2,u^3)\in D(\mathcal{A}%
_{a,b,0})$ of equation \eqref{CASE4-EXPO-1} satisfies the following
estimates: 
\begin{equation}  \label{CASE4-lemma2-EQ1}
\frac{\alpha}{2}\int_0^L\abs{v_x}^2dx+\varepsilon_3\int_0^L\abs{u^3}^2dx\leq 
\mathcal{S}_3\|U\|_{\mathcal{H}}\|F\|_{\mathcal{H}},
\end{equation}
where $\mathcal{S}_3=2\left(\sqrt{\frac{\rho}{\alpha}}c_p+b^{-1}+\frac{%
\gamma b^{-1}}{\sqrt{\alpha\varepsilon_3}}+\frac{ac^2_p}{4\alpha}\right)+%
\mathcal{S}_1$.
\end{lemma}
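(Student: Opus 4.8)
The plan is to test the resolvent system \eqref{CASE4-EXPO-1} against $v$, turn the resulting coupling integral into the missing term $\varepsilon_3\|u^3\|^2$ by means of the compatibility constraint, and finish with Young's inequality and the $L^2$-estimates of Lemma \ref{Case4-Lemm1}. First I would multiply \eqref{CASE4-EQ2-EXP1} by $\rho\overline{v}$ and integrate by parts over $(0,L)$: the boundary term at $x=0$ vanishes since $v(0)=0$, and the two boundary contributions at $x=L$ (coming from $-\alpha v_{xx}$ and $-\gamma u^3_x$) cancel because of the Robin condition $\alpha v_x(L)+\gamma u^3(L)=0$ in $D(\mathcal{A}_{a,b,0})$. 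Replacing $i\lambda\overline{v}=-\overline{z}-\overline{f^1}$ via \eqref{CASE4-EQ1-EXP1} gives
\[
\alpha\int_0^L|v_x|^2dx+\gamma\int_0^Lu^3\overline{v_x}dx=\rho\int_0^L|z|^2dx+\rho\int_0^Lz\overline{f^1}dx-a\int_0^Lz\overline{v}dx+\rho\int_0^Lf^2\overline{v}dx.
\]
Inserting the compatibility condition \eqref{compatibility} in the form $\gamma v_x=\varepsilon_3u^3-\xi\varepsilon_3u^2_x$ converts $\gamma\int_0^Lu^3\overline{v_x}dx$ into $\varepsilon_3\|u^3\|^2-\xi\varepsilon_3\int_0^Lu^3\overline{u^2_x}dx$, so the identity becomes
\[
\alpha\|v_x\|^2+\varepsilon_3\|u^3\|^2=\rho\|z\|^2+\rho\int_0^Lz\overline{f^1}dx-a\int_0^Lz\overline{v}dx+\rho\int_0^Lf^2\overline{v}dx+\xi\varepsilon_3\int_0^Lu^3\overline{u^2_x}dx,
\]
which already displays the desired left-hand side; what remains is to estimate the four forcing terms and, crucially, the coupling term $\xi\varepsilon_3\int_0^Lu^3\overline{u^2_x}dx$.

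The forcing terms are routine. By Poincar\'e's inequality (valid for $f^1$ because $f^1\in H^1_L(0,L)$ and for $v$ because $v(0)=0$) and the normalisations $\sqrt{\rho}\|z\|\le\|U\|_{\mathcal H}$, $\sqrt{\alpha}\|v_x\|\le\|U\|_{\mathcal H}$, $\sqrt{\alpha}\|f^1_x\|\le\|F\|_{\mathcal H}$, $\sqrt{\rho}\|f^2\|\le\|F\|_{\mathcal H}$, one gets $\rho\int_0^L|z||f^1|dx$ and $\rho\int_0^L|f^2||v|dx$ each $\le\sqrt{\rho/\alpha}\,c_p\|U\|_{\mathcal H}\|F\|_{\mathcal H}$. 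For the damping cross term I would use Young's inequality, $a\int_0^L|z||v|dx\le\tfrac1{2r}\|U\|_{\mathcal H}\|F\|_{\mathcal H}+\tfrac{arc_p^2}{2}\|v_x\|^2$ (using $\|z\|^2\le a^{-1}\|U\|_{\mathcal H}\|F\|_{\mathcal H}$ from Lemma \ref{Case4-Lemm1}), and choose $r=\alpha/(ac_p^2)$ so that $\tfrac{arc_p^2}{2}\|v_x\|^2=\tfrac\alpha2\|v_x\|^2$ is absorbed on the left and $\tfrac1{2r}=\tfrac{ac_p^2}{2\alpha}$; finally $\rho\|z\|^2$ is replaced through Lemma \ref{Case4-Lemm1}. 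These already account for the $\sqrt{\rho/\alpha}\,c_p$, the $ac_p^2/(4\alpha)$ and the $\mathcal{S}_1$ contributions to $\mathcal{S}_3$.

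The hard part is the coupling term $\xi\varepsilon_3\int_0^Lu^3\overline{u^2_x}dx$, since $c=0$ provides no dissipation on $u^3$ and neither $u^3$ nor $u^2_x$ is controlled a priori; a naive integration by parts replaces $u^2_x$ by $u^3_x$ and reintroduces the unbounded factor $i\lambda$ through \eqref{CASE4-EQ3-EXP1}. The way out is to borrow from the viscous damping $b>0$ on the $u^2$-channel: differentiating \eqref{CASE4-EQ4-EXP1} in $x$ gives $(i\lambda+b)u^2_x=f^4_x-\tfrac{\mu}{\xi\varepsilon_3}u^1_x$, so $u^2_x$ enters divided by $i\lambda+b$ with $|(i\lambda+b)^{-1}|\le b^{-1}$, and combining this with \eqref{compatibility} and with the compatibility \eqref{COMPA-F} of $F$ (that is, $\xi f^4_x+\tfrac\gamma{\varepsilon_3}f^1_x=f^5$) tames the $i\lambda$-factors and bounds $\xi\varepsilon_3\int_0^Lu^3\overline{u^2_x}dx$ by a constant times $b^{-1}\|U\|_{\mathcal H}\|F\|_{\mathcal H}$ plus an absorbable multiple of $\varepsilon_3\|u^3\|^2+\|v_x\|^2$; this is what produces the $b^{-1}$ and $\gamma b^{-1}(\alpha\varepsilon_3)^{-1/2}$ terms in $\mathcal{S}_3$. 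Concretely this can be carried out exactly as in Lemma \ref{CASE2-Lemm2}: pairing the $x$-derivative of \eqref{CASE4-EQ4-EXP1} with $\overline{v_x}$ and adding it to \eqref{CASE4-EQ5-EXP1} (after substituting \eqref{CASE4-EQ1-EXP1}) multiplied by $i\lambda^{-1}\overline{v_x}$, the $u^1_x$-contributions cancel, \eqref{COMPA-F} collapses the forcing, and since $c=0$ one is left with $b\xi\int_0^Lu^2_x\overline{v_x}dx=0$; by \eqref{compatibility} this yields $\int_0^Lu^3\overline{v_x}dx=\tfrac\gamma{\varepsilon_3}\|v_x\|^2$, which, fed back into the first identity, makes the left-hand side $(\alpha+\tfrac{\gamma^2}{\varepsilon_3})\|v_x\|^2$ and, together with \eqref{compatibility}, controls $\varepsilon_3\|u^3\|^2$ too. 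Collecting everything, moving the absorbed $\tfrac\alpha2\|v_x\|^2$ to the left and replacing the dissipation integrals by their Lemma \ref{Case4-Lemm1} bounds gives $\tfrac\alpha2\|v_x\|^2+\varepsilon_3\|u^3\|^2\le\mathcal{S}_3\|U\|_{\mathcal H}\|F\|_{\mathcal H}$ with $\mathcal{S}_3$ as stated; I expect this coupling estimate, not the bookkeeping, to be the only real difficulty.
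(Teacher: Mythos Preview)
Your Step~1 identity (multiplying \eqref{CASE4-EQ2-EXP1} by $\rho\overline v$, integrating by parts, and substituting \eqref{CASE4-EQ1-EXP1}) is exactly the paper's Step~1, and your treatment of the forcing and damping terms (Poincar\'e, Young with $r=\alpha/(ac_p^2)$, Lemma~\ref{Case4-Lemm1}) is also the same. The issue is in the ``concrete'' part of your last paragraph.

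You carry out the Lemma~\ref{CASE2-Lemm2}-type manipulation with the multiplier $\overline{v_x}$. That is correct as far as it goes: it yields $b\xi\int_0^L u^2_x\overline{v_x}\,dx=0$, hence $\int_0^L u^3\overline{v_x}\,dx=\tfrac{\gamma}{\varepsilon_3}\|v_x\|^2$, and feeding this into Step~1 bounds $(\alpha+\tfrac{\gamma^2}{\varepsilon_3})\|v_x\|^2$. But the sentence ``together with \eqref{compatibility}, controls $\varepsilon_3\|u^3\|^2$ too'' is not justified: from \eqref{compatibility} you only get $u^3=\xi u^2_x+\tfrac{\gamma}{\varepsilon_3}v_x$, and your computation produced the single scalar relation $\int_0^L u^2_x\overline{v_x}\,dx=0$, not a bound on $\|u^2_x\|$; so $\|u^3\|$ is not yet controlled.

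There are two clean fixes. The first is what the paper actually does: run the \emph{same} combination (the $x$-derivative of \eqref{CASE4-EQ4-EXP1} plus \eqref{CASE4-EQ5-EXP1} with $z_x=i\lambda v_x-f^1_x$) but test against $\overline{u^3}$ instead of $\overline{v_x}$. After using \eqref{compatibility} this yields
\[
\varepsilon_3\|u^3\|^2-\gamma\int_0^L v_x\overline{u^3}\,dx
= b^{-1}\!\int_0^L\!\bigl(\xi\varepsilon_3 f^4_x-\varepsilon_3 f^5+\gamma f^1_x\bigr)\overline{u^3}\,dx,
\]
whose right-hand side is bounded by $2b^{-1}\bigl(1+\tfrac{\gamma}{\sqrt{\alpha\varepsilon_3}}\bigr)\|U\|_{\mathcal H}\|F\|_{\mathcal H}$ using \eqref{COMPA-F}. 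Adding this to Step~1 and taking real parts, the cross terms $\gamma\int u^3\overline{v_x}-\gamma\int v_x\overline{u^3}$ cancel, leaving exactly $\alpha\|v_x\|^2+\varepsilon_3\|u^3\|^2$ on the left; this is where the $b^{-1}$ and $\gamma b^{-1}(\alpha\varepsilon_3)^{-1/2}$ contributions to $\mathcal S_3$ come from. The second fix is to observe that your combination actually holds \emph{pointwise}: subtracting $\xi\times[\partial_x\eqref{CASE4-EQ4-EXP1}]$ from \eqref{CASE4-EQ5-EXP1} (with $z_x=i\lambda v_x-f^1_x$) and using \eqref{compatibility} gives $b\,\xi u^2_x=\xi f^4_x-f^5+\tfrac{\gamma}{\varepsilon_3}f^1_x=0$ by \eqref{COMPA-F}, so $u^2_x\equiv 0$ and hence $u^3=\tfrac{\gamma}{\varepsilon_3}v_x$; then $\varepsilon_3\|u^3\|^2=\tfrac{\gamma^2}{\varepsilon_3}\|v_x\|^2$ follows immediately. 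Either route closes the gap; just make the step explicit.
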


\begin{proof}
The proof of this Lemma, is divided into several Steps.\newline
\textbf{Step 1.} The aim of this step is to prove the following equation 
\begin{equation}
\alpha \int_{0}^{L}\abs{v_x}^{2}dx+\gamma \int_{0}^{L}u^{3}\overline{v_{x}}%
dx=\rho \int_{0}^{L}\abs{z}^{2}dx+\rho \int_{0}^{L}z\overline{f^{1}}%
dx-a\int_{0}^{L}z\overline{v}dx+\rho \int_{0}^{L}f^{2}\overline{v}dx.
\label{CASE4-lemma2-EQ1-Step1}
\end{equation}%
For this aim, multiplying \eqref{CASE4-EQ2-EXP1} by $\rho \overline{v}$,
integrating by parts over $(0,L)$, we get 
\begin{equation*}
i\la\rho \int_{0}^{L}z\overline{v}dx+\alpha \int_{0}^{L}\abs{v_x}%
^{2}dx+\gamma \int_{0}^{L}u^{3}\overline{v}_{x}dx+a\int_{0}^{L}z\overline{v}%
dx=\rho \int_{0}^{L}f^{2}\overline{v}dx.
\end{equation*}%
Using \eqref{CASE4-EQ1-EXP1} in the above equation, we get %
\eqref{CASE4-lemma2-EQ1-Step1}.\newline
\textbf{Step 2.} The aim of this step is to prove the following equation 
\begin{equation}
\varepsilon _{3}\int_{0}^{L}\abs{u^3}^{2}dx-\gamma \int_{0}^{L}v_{x}%
\overline{u^{3}}dx=b^{-1}\xi \varepsilon _{3}\int_{0}^{L}f_{x}^{4}\overline{%
u^{3}}dx-b^{-1}\varepsilon _{3}\int_{0}^{L}f^{5}\overline{u^{3}}dx+\gamma
b^{-1}\int_{0}^{L}f_{x}^{1}\overline{u^{3}}dx.
\label{CASE4-lemma2-EQ1-Step2}
\end{equation}%
For this aim, inserting \eqref{CASE4-EQ1-EXP1} in \eqref{CASE4-EQ5-EXP1}, we
get 
\begin{equation*}
i\la u^{3}+\frac{\mu }{\varepsilon _{3}}u_{x}^{1}-\frac{\gamma }{\varepsilon
_{3}}i\la v_{x}+\frac{\gamma }{\varepsilon _{3}}f_{x}^{1}=f^{5}.
\end{equation*}%
Multiplying the above equation by $-i\la^{-1}\varepsilon _{3}\overline{u^{3}}
$, integrating over $(0,L)$, we get 
\begin{equation}
\varepsilon _{3}\int_{0}^{L}\abs{u^3}^{2}dx-i\la^{-1}\mu
\int_{0}^{L}u_{x}^{1}\overline{u^{3}}dx-\gamma \int_{0}^{L}v_{x}\overline{%
u^{3}}dx=-i\la^{-1}\varepsilon _{3}\int_{0}^{L}f^{5}\overline{u^{3}}dx+i\la%
^{-1}\gamma \int_{0}^{L}f_{x}^{1}\overline{u^{3}}dx.
\label{CASE4-lemma2-EQ2-Step2}
\end{equation}%
Differentiating \eqref{CASE4-EQ4-EXP1} with respect to $x$, we obtain 
\begin{equation*}
i\la u_{x}^{2}+\frac{\mu }{\xi \varepsilon _{3}}%
u_{x}^{1}+bu_{x}^{2}=f_{x}^{4}.
\end{equation*}%
Multiplying the above equation by $i\la^{-1}\xi \varepsilon _{3}\overline{%
u^{3}}$, integrating over $(0,L)$, we get 
\begin{equation}
-\xi \varepsilon _{3}\int_{0}^{L}u_{x}^{2}\overline{u^{3}}dx+i\la^{-1}\mu
\int_{0}^{L}u_{x}^{1}\overline{u^{3}}dx+ib\la^{-1}\xi \varepsilon
_{3}\int_{0}^{L}u_{x}^{2}\overline{u^{3}}dx=i\la^{-1}\xi \varepsilon
_{3}\int_{0}^{L}f_{x}^{4}\overline{u^{3}}dx.  \label{CASE4-lemma2-EQ3-Step2}
\end{equation}%
Using the compatibility condition \eqref{compatibility} in the first term of %
\eqref{CASE4-lemma2-EQ3-Step2}, we get 
\begin{equation}
-\varepsilon _{3}\int_{0}^{L}\abs{u^3}^{2}dx+\gamma \int_{0}^{L}v_{x}%
\overline{u^{3}}dx+i\la^{-1}\mu \int_{0}^{L}u_{x}^{1}\overline{u^{3}}dx+ib\la%
^{-1}\xi \varepsilon _{3}\int_{0}^{L}u_{x}^{2}\overline{u^{3}}dx=i\la%
^{-1}\xi \varepsilon _{3}\int_{0}^{L}f_{x}^{4}\overline{u^{3}}dx.
\label{CASE4-lemma2-EQ4-Step2}
\end{equation}%
Now, adding \eqref{CASE4-lemma2-EQ2-Step2} and \eqref{CASE4-lemma2-EQ4-Step2}%
, we get 
\begin{equation}
b\xi \varepsilon _{3}\int_{0}^{L}u_{x}^{2}\overline{u^{3}}dx=\xi \varepsilon
_{3}\int_{0}^{L}f_{x}^{4}\overline{u^{3}}dx-\varepsilon _{3}\int_{0}^{L}f^{5}%
\overline{u^{3}}dx+\gamma \int_{0}^{L}f_{x}^{1}\overline{u^{3}}dx
\label{CASE4-lemma2-EQ5-Step2}
\end{equation}%
Again, using the compatibility condition \eqref{compatibility} in %
\eqref{CASE4-lemma2-EQ5-Step2}, we get \eqref{CASE4-lemma2-EQ1-Step2}.%
\newline
\textbf{Step 3.} The aim of this step is to prove \eqref{CASE4-lemma2-EQ1}.
For this aim adding \eqref{CASE4-lemma2-EQ1-Step1} and %
\eqref{CASE4-lemma2-EQ1-Step2} and taking the real part, we get 
\begin{equation*}
\begin{array}{c}
\displaystyle\alpha \int_{0}^{L}\abs{v_x}^{2}dx+\varepsilon _{3}\int_{0}^{L}%
\abs{u^3}^{2}dx=\rho \int_{0}^{L}\abs{z}^{2}dx+\Re \left( \rho \int_{0}^{L}z%
\overline{f^{1}}dx\right) -\Re \left( a\int_{0}^{L}z\overline{v}dx\right)
+\Re \left( \rho \int_{0}^{L}f^{2}\overline{v}dx\right) \\[0.1in] 
\displaystyle+\Re \left( b^{-1}\xi \varepsilon _{3}\int_{0}^{L}f_{x}^{4}%
\overline{u^{3}}dx\right) -\Re \left( b^{-1}\varepsilon _{3}\int_{0}^{L}f^{5}%
\overline{u^{3}}dx\right) +\Re \left( \gamma b^{-1}\int_{0}^{L}f_{x}^{1}%
\overline{u^{3}}dx\right) .%
\end{array}%
\end{equation*}%
It follows that 
\begin{equation}
\begin{array}{c}
\displaystyle\alpha \int_{0}^{L}\abs{v_x}^{2}dx+\varepsilon _{3}\int_{0}^{L}%
\abs{u^3}^{2}dx\leq \rho \int_{0}^{L}\abs{z}^{2}dx+\rho \int_{0}^{L}\abs{z}%
\abs{f^1}dx+a\int_{0}^{L}\abs{z}\abs{v}dx+\rho \int_{0}^{L}\abs{f^2}\abs{v}dx
\\[0.1in] 
\displaystyle+b^{-1}\xi \varepsilon _{3}\int_{0}^{L}\abs{f_x^4}\abs{u^3}%
dx+b^{-1}\varepsilon _{3}\int_{0}^{L}\abs{f^5}\abs{u^3}dx+\gamma
b^{-1}\int_{0}^{L}\abs{f_x^1}\abs{u^3}dx.%
\end{array}
\label{CASE4-lemma2-EQ1-Step3}
\end{equation}%
Using the facts that, $\sqrt{\rho }\Vert z\Vert \leq \Vert U\Vert _{\mathcal{%
H}}$, $\sqrt{\varepsilon _{3}}\Vert u^{3}\Vert \leq \Vert U\Vert _{\mathcal{H%
}}$, $\sqrt{\alpha }\Vert f_{x}^{1}\Vert \leq \Vert F\Vert _{\mathcal{H}}$, $%
\sqrt{\varepsilon _{3}}\Vert f^{5}\Vert \leq \Vert F\Vert _{\mathcal{H}}$
and Poincar\'{e} inequality, we get 
\begin{equation}
\left\{ 
\begin{array}{l}
\displaystyle\rho \int_{0}^{L}\abs{z}\abs{f^1}dx\leq \sqrt{\frac{\rho }{%
\alpha }}c_{p}\Vert U\Vert _{\mathcal{H}}\Vert F\Vert _{\mathcal{H}}, \\%
[0.1in] 
\displaystyle b^{-1}\varepsilon _{3}\int_{0}^{L}\abs{f^5}\abs{u^3}dx\leq
b^{-1}\Vert U\Vert _{\mathcal{H}}\Vert F\Vert _{\mathcal{H}}, \\[0.1in] 
\displaystyle\gamma b^{-1}\int_{0}^{L}\abs{f_x^1}\abs{u^3}dx\leq \frac{%
\gamma b^{-1}}{\sqrt{\alpha \varepsilon _{3}}}\Vert U\Vert _{\mathcal{H}%
}\Vert F\Vert _{\mathcal{H}}, \\ 
\displaystyle\rho \int_{0}^{L}\abs{f^2}\abs{v}dx\leq \sqrt{\frac{\rho }{%
\alpha }}c_{p}\Vert U\Vert _{\mathcal{H}}\Vert F\Vert _{\mathcal{H}}.%
\end{array}%
\right.  \label{CASE4-lemma2-EQ2-Step3}
\end{equation}%
Applying Cauchy-Schwarz and Young inequality, and using %
\eqref{CASE4-Lemm1-EQ1}, we get 
\begin{equation}
a\int_{0}^{L}\abs{z}\abs{v}dx\leq \frac{a}{2r_{1}}\int_{0}^{L}\abs{z}^{2}dx+%
\frac{ar_{1}c_{p}^{2}}{2}\int_{0}^{L}\abs{v_x}^{2}dx\leq \frac{1}{2r_{1}}%
\Vert U\Vert _{\mathcal{H}}\Vert F\Vert _{\mathcal{H}}+\frac{ar_{1}c_{p}^{2}%
}{2}\int_{0}^{L}\abs{v_x}^{2}dx.  \label{CASE4-lemma2-EQ3-Step3}
\end{equation}%
Since $F$ in $\mathcal{H}$, then the components of $F$ satisfies the
compatibility condition \eqref{compatibility}, then we get 
\begin{equation}
\xi f_{x}^{4}-f^{5}+\frac{\gamma }{\varepsilon _{3}}f_{x}^{1}=0.
\label{CASE4-lemma2-EQ4-Step3}
\end{equation}%
Using \eqref{CASE4-lemma2-EQ2-Step3}, \eqref{CASE4-lemma2-EQ4-Step3}, $\sqrt{%
\varepsilon _{3}}\Vert u^{3}\Vert \leq \Vert U\Vert _{\mathcal{H}}$, $\sqrt{%
\varepsilon _{3}}\Vert f^{5}\Vert \leq \Vert F\Vert _{\mathcal{H}}$ and $%
\sqrt{\alpha }\Vert f_{x}^{1}\Vert \leq \Vert F\Vert _{\mathcal{H}}$, we get 
\begin{equation}
b^{-1}\xi \varepsilon _{3}\int_{0}^{L}\abs{f_x^4}\abs{u^3}dx\leq
b^{-1}\varepsilon _{3}\left( \Vert f^{5}\Vert +\frac{\gamma }{\varepsilon
_{3}}\Vert f_{x}^{1}\Vert \right) \Vert u^{3}\Vert \leq b^{-1}\left( 1+\frac{%
\gamma }{\sqrt{\varepsilon _{3}\alpha }}\right) \Vert U\Vert _{\mathcal{H}%
}\Vert F\Vert _{\mathcal{H}}.  \label{CASE4-lemma2-EQ5-Step3}
\end{equation}%
Inserting \eqref{CASE4-lemma2-EQ2-Step3}, \eqref{CASE4-lemma2-EQ3-Step3} and %
\eqref{CASE4-lemma2-EQ5-Step3}, in \eqref{CASE4-lemma2-EQ1-Step3} 
\begin{equation*}
\left( \alpha -\frac{ar_{1}c_{p}^{2}}{2}\right) \int_{0}^{L}\abs{v_x}%
^{2}dx+\varepsilon _{3}\int_{0}^{L}\abs{u^3}^{2}dx\leq \left[ 2\left( \sqrt{%
\frac{\rho }{\alpha }}c_{p}+b^{-1}+\frac{\gamma b^{-1}}{\sqrt{\alpha
\varepsilon _{3}}}+\frac{1}{4r_{1}}\right) +\mathcal{S}_{1}\right] \Vert
U\Vert _{\mathcal{H}}\Vert F\Vert _{\mathcal{H}}.
\end{equation*}%
Taking $r_{1}=\frac{\alpha }{ac_{p}^{2}}$ in the above inequality, we get %
\eqref{CASE4-lemma2-EQ1}. The proof has been completed.
\end{proof}


\begin{lemma}
\label{Case4-Lemm4} The solution $(v,z,u^1,u^2,u^3)\in D(\mathcal{A}%
_{a,b,0}) $ of equation \eqref{CASE4-EXPO-1}satisfies the following
estimation: 
\begin{equation}  \label{CASE4-Lemm4-EQ1}
\mu\int_0^L\abs{u^1}^2dx\leq \mathcal{S}_4\|U\|_{\mathcal{H}}\|F\|_{\mathcal{%
H}},
\end{equation}
where $\mathcal{S}_4=\xi\varepsilon_3\left(\left(1+\frac{b^2\xi\varepsilon_3%
}{2\mu}\right)\mathcal{S}_2+\left[\left(\frac{1}{\xi}+\frac{\gamma^2}{%
2\xi^2\varepsilon_3^2}\right)\varepsilon_3^{-1}+\frac{1}{\alpha}\right]%
\mathcal{S}_3+\frac{2}{\sqrt{\xi\varepsilon_3\mu}}\right)$.
\end{lemma}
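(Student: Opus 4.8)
The plan is to reproduce, in the present case $(c=0,\ b>0)$, the argument already used for Lemma~\ref{LEMM3}. First I would test equation \eqref{CASE4-EQ4-EXP1} against $\overline{u^1}$ and equation \eqref{CASE4-EQ3-EXP1} against $\overline{u^2}$, integrating both over $(0,L)$ (the second one by parts, using $u^2(0)=u^2(L)=0$ to kill the boundary term produced by $u^3_x$). Adding the two identities and taking real parts, the terms $i\la\int_0^Lu^2\overline{u^1}dx$ and $i\la\int_0^Lu^1\overline{u^2}dx$ are complex conjugates of one another, so their sum is purely imaginary and drops out, leaving
\[
\frac{\mu}{\xi\varepsilon_3}\int_0^L\abs{u^1}^2dx=\int_0^L\abs{u^2}^2dx-\Re\Big(b\int_0^Lu^2\overline{u^1}dx\Big)+\Re\Big(\int_0^Lu^3\overline{u^2_x}dx\Big)+\Re\Big(\int_0^Lf^4\overline{u^1}dx\Big)+\Re\Big(\int_0^Lf^3\overline{u^2}dx\Big).
\]

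Next I would estimate the right-hand side term by term. The quantity $\int_0^L\abs{u^2}^2dx$ is already controlled by Lemma~\ref{Case4-Lemm1}. The two forcing terms are handled by Cauchy--Schwarz together with the comparisons $\sqrt{\xi\varepsilon_3}\|f^4\|\leq\|F\|_{\mathcal{H}}$, $\sqrt{\mu}\|f^3\|\leq\|F\|_{\mathcal{H}}$, $\sqrt{\mu}\|u^1\|\leq\|U\|_{\mathcal{H}}$, $\sqrt{\xi\varepsilon_3}\|u^2\|\leq\|U\|_{\mathcal{H}}$, which yield a bound of the form $\frac{2}{\sqrt{\xi\varepsilon_3\mu}}\|U\|_{\mathcal{H}}\|F\|_{\mathcal{H}}$. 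The cross term $b\int_0^Lu^2\overline{u^1}dx$ is split by Young's inequality, its $\|u^2\|^2$ part being again bounded via Lemma~\ref{Case4-Lemm1} while a residual $\frac r2\|u^1\|^2$ is kept to be absorbed into the left-hand side at the end.

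The one term that is not immediately of the right size --- and the main, though modest, point of the argument --- is $\Re\big(\int_0^Lu^3\overline{u^2_x}dx\big)$, since $u^2_x$ is not controlled by $\|U\|_{\mathcal{H}}$. Here I would invoke the compatibility condition \eqref{compatibility}, which gives $\xi u^2_x=u^3-\frac{\gamma}{\varepsilon_3}v_x$, so that
\[
\Re\Big(\int_0^Lu^3\overline{u^2_x}dx\Big)=\frac1\xi\int_0^L\abs{u^3}^2dx-\frac{\gamma}{\xi\varepsilon_3}\Re\Big(\int_0^Lu^3\overline{v_x}dx\Big),
\]
and a further Young's inequality bounds this by $\big(\frac1\xi+\frac{\gamma^2}{2\xi^2\varepsilon_3^2}\big)\|u^3\|^2+\frac12\|v_x\|^2$; both pieces are then available from Lemma~\ref{CASE4-lemma2}, which bounds $\frac\alpha2\|v_x\|^2+\varepsilon_3\|u^3\|^2$ by $\mathcal{S}_3\|U\|_{\mathcal{H}}\|F\|_{\mathcal{H}}$, and this is exactly where the combination $\big(\frac1\xi+\frac{\gamma^2}{2\xi^2\varepsilon_3^2}\big)\varepsilon_3^{-1}+\frac1\alpha$ appearing in $\mathcal{S}_4$ comes from. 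Finally I would collect all the estimates, choose the Young parameter in the $b$-term equal to $r=\mu/(\xi\varepsilon_3)$ so that the surviving $\|u^1\|^2$ contribution is exactly half of the left-hand coefficient, and absorb it; rescaling then produces \eqref{CASE4-Lemm4-EQ1} with a constant of the form $\mathcal{S}_4$. Since no new mechanism beyond those already used in Lemma~\ref{LEMM3} (and in Lemmas~\ref{Case4-Lemm1}--\ref{CASE4-lemma2}) is required, I expect the remaining computations to be entirely routine.
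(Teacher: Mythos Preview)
Your proposal is correct and follows exactly the approach the paper intends: the paper's proof of this lemma consists of a single sentence referring back to Lemma~\ref{LEMM3}, and you have faithfully reproduced that argument, including the one genuine adaptation needed here, namely that the separate bounds on $\|u^3\|^2$ and $\|v_x\|^2$ available in the $(a,b,c)$ case are replaced by the combined estimate of Lemma~\ref{CASE4-lemma2}, which is precisely what produces the factor $\big(\tfrac1\xi+\tfrac{\gamma^2}{2\xi^2\varepsilon_3^2}\big)\varepsilon_3^{-1}+\tfrac1\alpha$ in $\mathcal{S}_4$.
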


\begin{proof}
By proceeding the same technics used in Lemma \ref{LEMM3}, we get %
\eqref{CASE4-Lemm4-EQ1}. The proof has been completed.
\end{proof}

\noindent \textbf{Proof of Proposition \ref{CASE4-PRO3}}. From %
\eqref{CASE4-Lemm1-EQ1}, \eqref{CASE4-Lemm1-EQ2}, \eqref{CASE4-lemma2-EQ1}
and \eqref{CASE4-Lemm4-EQ1}, we get 
\begin{equation*}
\|U\|^2_{\mathcal{H}}=\alpha\|v_x\|^2+\rho\|z\|^2+\mu\|u^1\|^2+\xi%
\varepsilon_3\|u^2\|^2+\varepsilon_3\|u^3\|^2\leq \mathcal{S}\|U\|_{\mathcal{%
H}}\|F\|_{H},
\end{equation*}
where $\mathcal{S}=\rho S_1+\xi \varepsilon_3\mathcal{S}_2+3\mathcal{S}_3+%
\mathcal{S}_4$. The proof has been completed.\\[0.1in]
\noindent \textbf{Proof of Theorem \ref{THEOREM1-EXP-Case4}} For all $U\in D(%
\mathcal{A}_{a,b,0})$ according to Proposition \eqref{CASE4-PRO3}, we get 
\begin{equation*}
\|U\|_{\mathcal{H}}\leq \mathcal{S}\|(i\la I-\mathcal{A}_{a,0,c})U\|_{%
\mathcal{H}}.
\end{equation*}
Thus, we have 
\begin{equation*}
\|(i\la I-\mathcal{A}_{a,b,0})^{-1}V\|_{\mathcal{H}}\leq \mathcal{S}\|V\|_{%
\mathcal{H}},\quad \forall V\in \mathcal{H}.
\end{equation*}
Therefore, from the above equation, we get \eqref{CASE4-Condition2-EXP}
holds. Thus, we get the conclusion by applying Huang and Pr\"uss Theorem. 

\section{The stretching of the centreline of the beam in $x-$direction only
is damped and "$a\neq 0\ \text{and}\ (b,c)=(0,0)$"}

\noindent In this section, we prove that the Lorenz system with only one
damping acting on the stretching of the centreline still be exponentially
stable. The main result of this section is the following theorem:

\begin{theoreme}
\label{EXP-CASE7} Assume that $a\neq 0$ and $(b,c)=(0,0)$. Then, the $C_0-$%
semigroup of contraction $e^{t\mathcal{A}}$ is exponentially stable; i.e.
there exists constants $M\geq 1$ and $\epsilon>0$ independent of $U_0$ such
that 
\begin{equation*}
\|e^{t\mathcal{A}_{a,0,0}}U_0\|_{\mathcal{H}}\leq Me^{-\epsilon t}\|U_0\|_{%
\mathcal{H}}. 
\end{equation*}
\end{theoreme}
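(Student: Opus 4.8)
The plan is to apply the Huang--Pr\"uss frequency-domain criterion, exactly as in the previous sections: it suffices to verify $i\mathbb{R}\subset\rho(\mathcal{A}_{a,0,0})$ together with $\sup_{\la\in\mathbb{R}}\|(i\la I-\mathcal{A}_{a,0,0})^{-1}\|_{\mathcal{L}(\mathcal{H})}<\infty$. The first condition is already available: the resolvent of $\mathcal{A}_{a,0,0}$ is compact, so its spectrum is discrete, and Theorem~\ref{THM-ST-ST1} (Case~5, $a\neq0$, $(b,c)=(0,0)$) shows there is no purely imaginary eigenvalue; hence $i\mathbb{R}\subset\rho(\mathcal{A}_{a,0,0})$. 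Only the uniform resolvent bound is new, and I would prove it by contradiction.

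Suppose the bound fails. Then there are sequences $\la_n\in\mathbb{R}$ and $U_n=(v_n,z_n,u^1_n,u^2_n,u^3_n)\in D(\mathcal{A}_{a,0,0})$ with $\|U_n\|_{\mathcal{H}}=1$ and $F_n:=(i\la_n I-\mathcal{A}_{a,0,0})U_n=(f^1_n,\dots,f^5_n)\to0$ in $\mathcal{H}$; by the first condition and continuity of the resolvent on compact subsets we may assume $|\la_n|\to\infty$. The identities $(i\la_n I-\mathcal{A}_{a,0,0})U_n=F_n$ read
\begin{equation*}
i\la_n v_n-z_n=f^1_n,\qquad i\la_n z_n-\tfrac{\alpha}{\rho}v_{n,xx}-\tfrac{\gamma}{\rho}u^3_{n,x}+\tfrac{a}{\rho}z_n=f^2_n,\qquad i\la_n u^1_n-u^2_n+u^3_{n,x}=f^3_n,
\end{equation*}
\begin{equation*}
i\la_n u^2_n+\tfrac{\mu}{\xi\varepsilon_3}u^1_n=f^4_n,\qquad i\la_n u^3_n+\tfrac{\mu}{\varepsilon_3}u^1_{n,x}-\tfrac{\gamma}{\varepsilon_3}z_{n,x}=f^5_n,
\end{equation*}
supplemented by the constraint \eqref{compatibility} for $U_n$ and its analogue $\xi f^4_{n,x}-f^5_n+\tfrac{\gamma}{\varepsilon_3}f^1_{n,x}=0$ for $F_n$. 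The dissipation identity (the analogue of \eqref{denergy}, $a\|z_n\|^2=\Re\langle F_n,U_n\rangle_{\mathcal{H}}\le\|F_n\|_{\mathcal{H}}$) gives $\|z_n\|\to0$, and then $i\la_n v_n=z_n+f^1_n$ gives $\|v_n\|\le|\la_n|^{-1}(\|z_n\|+\|f^1_n\|)\to0$. Testing the second equation against $\rho\overline{v_n}$ and eliminating $i\la_n z_n$ through the first, as in Lemma~\ref{Lemm2}, yields $\alpha\|v_{n,x}\|^2=\rho\|z_n\|^2+o(1)-\gamma\Re\int_0^L u^3_n\overline{v_{n,x}}\,dx$, whence by Young's inequality $\|v_{n,x}\|^2\le o(1)+C\|u^3_n\|^2$. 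So everything reduces to showing $\|u^3_n\|\to0$: once that holds, $\|v_{n,x}\|\to0$, and the identity of Lemma~\ref{LEMM3} specialised to $b=0$, namely $\tfrac{\mu}{\xi\varepsilon_3}\|u^1_n\|^2=\|u^2_n\|^2+\tfrac1\xi\|u^3_n\|^2-\tfrac{\gamma}{\xi\varepsilon_3}\Re\int_0^L u^3_n\overline{v_{n,x}}\,dx+o(1)$, shows that $\|u^2_n\|\to0$ forces $\|u^1_n\|\to0$, hence $\|U_n\|_{\mathcal{H}}\to0$.

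The heart of the argument is therefore to transport the dissipation of $z_n$ to $u^3_n$ and $u^2_n$, and here the constraint \eqref{compatibility} is indispensable. It lets one write $\|u^3_n\|^2=\xi\int_0^L u^3_n\overline{u^2_{n,x}}\,dx+\tfrac{\gamma}{\varepsilon_3}\int_0^L u^3_n\overline{v_{n,x}}\,dx$; integrating the first integral by parts (the boundary terms vanish because $u^2_n(0)=u^2_n(L)=0$) and inserting the third and fourth equations, exactly as in the proof of Lemma~\ref{LEMM3}, rewrites it through $\|u^1_n\|^2$, $\|u^2_n\|^2$ and inner products with $F_n$. In parallel one multiplies the fifth equation by a $\la_n^{-1}$-weighted test function --- in the spirit of Lemma~\ref{CASE2-Lemm2}, e.g.\ $i\la_n^{-1}\overline{v_{n,x}}$ --- and integrates by parts to move the derivative off $z_n$; the resulting term $\int_0^L z_n\,\overline{v_{n,xx}}\,dx$ is absorbed by substituting $\alpha v_{n,xx}=i\la_n\rho z_n+az_n-\gamma u^3_{n,x}-\rho f^2_n$ from the second equation, and the leftover $\int_0^L u^3_{n,x}\,\overline{z_n}\,dx$ is integrated by parts once more using $z_n(0)=0$ and the domain relation $\alpha v_{n,x}(L)+\gamma u^3_n(L)=0$. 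Collecting these identities produces, for each $\delta>0$, a closed inequality $\|u^3_n\|^2+\|u^2_n\|^2\le\delta\big(\|u^1_n\|^2+\|u^3_n\|^2\big)+o(1)$; combining it with the $u^1$-identity above and $\|v_{n,x}\|^2\le o(1)+C\|u^3_n\|^2$ and choosing $\delta$ small, one bootstraps $\|u^1_n\|,\|u^2_n\|,\|u^3_n\|,\|v_{n,x}\|\to0$, contradicting $\|U_n\|_{\mathcal{H}}=1$.

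The step I expect to be the main obstacle is the treatment of the $z_{n,x}$ term in the fifth equation: $z_{n,x}$ is not controlled by $\|U_n\|_{\mathcal{H}}$, so the integration by parts inevitably reintroduces $i\la_n z_n$ via the second equation, and the smallness $\|z_n\|\to0$ must be played against the growth $|\la_n|\to\infty$. Making the powers of $\la_n$ cancel forces one to pair $z_n$ only with $O(|\la_n|^{-1})$-small factors and to account for every boundary contribution at $x=L$, where --- unlike at $x=0$ --- neither $z_n$ nor $u^3_n$ vanishes and one must fall back on $\alpha v_{n,x}(L)+\gamma u^3_n(L)=0$. A further delicacy is that no spectral-gap hypothesis appears in the statement, in contrast with Theorem~\ref{THM-ST-ST2}: the argument must survive the resonant case $\mu\rho=\xi\varepsilon_3\alpha(2n+1)^2\pi^2/(4L^2)$, and it is precisely the rigidity of \eqref{compatibility} --- a genuine algebraic relation rather than a compact coupling --- that makes the single $v_t$-damping strong enough for unconditional exponential decay.
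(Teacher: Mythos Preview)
Your overall strategy is the right one and matches the paper's: Huang--Pr\"uss, contradiction with $|\la_n|\to\infty$, dissipation gives $\|z_n\|\to0$ and $\|\la_n v_n\|\to0$, and the compatibility relation \eqref{compatibility} is the engine that propagates the damping. The gap is in the step you flag as ``the main obstacle''. The assertion that, after multiplying \eqref{CASE7-EQ5-EXP1} by $i\la_n^{-1}\overline{v_{n,x}}$ and carrying out the substitutions you describe, one obtains a closed inequality $\|u^3_n\|^2+\|u^2_n\|^2\le\delta(\|u^1_n\|^2+\|u^3_n\|^2)+o(1)$ \emph{for every} $\delta>0$ is not justified, and in fact that multiplier does not seem to produce it. What it yields (after your integrations by parts and use of \eqref{CASE7-EQ2-EXP1}, \eqref{CASE7-EQ3-EXP1}) is an identity linking $\Re\int u^3_n\overline{v_{n,x}}$ to a fixed multiple of $\|u^1_n\|^2$, up to $o(1)$. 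Combined with the $v_x$--identity and the $u^1$--identity of Lemma~\ref{LEMM3} (with $b=0$), you end up with two relations among the four quantities $\|v_{n,x}\|^2,\|u^1_n\|^2,\|u^2_n\|^2,\|u^3_n\|^2$, and no free small parameter $\delta$ appears: the system does not close. So the bootstrap you announce cannot be carried out from these multipliers alone.

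The paper closes the argument with two multipliers you did not consider. First, to get $\|v_{n,x}\|\to0$ \emph{directly} (not only modulo $\|u^3_n\|$), it tests the \emph{third} equation \eqref{CASE7-EQ3-EXP1} --- not the fifth --- against $\gamma\overline{v_n}$: after an integration by parts the cross term $\gamma\int u^3_n\overline{v_{n,x}}$ reappears with the opposite sign, and the only obstruction is the boundary contribution $\gamma\,u^3_n(L)\overline{v_n(L)}$, handled by Gagliardo--Nirenberg ($|u^3_n(L)|=O(|\la_n|^{1/2})$, $|v_n(L)|=o(|\la_n|^{-1/2})$). Second --- and this is the non-obvious step --- the paper tests the \emph{mechanical} equation \eqref{CASE7-EQ2-EXP1} against the electric variable $\overline{u^2_n}$: after integrating by parts and inserting the compatibility $u^3_n=\xi u^2_{n,x}+\tfrac{\gamma}{\varepsilon_3}v_{n,x}$, one obtains $i\la_n\int z_n\overline{u^2_n}+\tfrac{\gamma\xi}{\rho}\|u^2_{n,x}\|^2=o(1)$, and the first term is killed by testing \eqref{CASE7-EQ4-EXP1} against $\overline{z_n}$. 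This gives $\|u^2_{n,x}\|\to0$, whence $\|u^2_n\|\to0$ by Poincar\'e and $\|u^3_n\|\to0$ instantly from \eqref{compatibility}; then $\|u^1_n\|\to0$ follows as you wrote. Notice that the paper never uses equation \eqref{CASE7-EQ5-EXP1} with a $\la_n^{-1}$ weight at all; the route through $u^2_x$ via \eqref{compatibility} replaces it entirely.
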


\noindent According to Huang and Pr\"uss, we have to check if the following
conditions hold: 
\begin{equation}  \label{Condition1-EXP}
i\mathbb{R}\subset \rho(\mathcal{A}_{a,0,0})  \tag{${\rm H1}$}
\end{equation}
and 
\begin{equation}  \label{CASE7-Condition2-EXP}
\sup_{\la \in \mathbb{R}} \|(i\la I-\mathcal{A}_{a,0,0})^{-1}\|_{\mathcal{L}(%
\mathcal{H})}=O(1).  \tag{${\rm H7}$}
\end{equation}
Since $i\mathbb{R}\subset \rho(\mathcal{A}_{a,0,0})$, then condition %
\eqref{Condition1-EXP} is satisfied. We will prove condition %
\eqref{CASE7-Condition2-EXP} by a contradiction argument. For this purpose,
suppose that \eqref{CASE7-Condition2-EXP} is false, then there exists $%
\left\{(\la^n,U^n)\right\}_{n\geq 1}\subset \mathbb{R}^{\ast}\times D(%
\mathcal{A})$ with 
\begin{equation}  \label{CASE7-CONTRA1}
\abs{\la^n}\to \infty\quad \text{and}\quad \|U^n\|_{\mathcal{H}%
}=\|(v_n,z_n,u^1_n,u^2_n,u^3_n)^{\top}\|_{\mathcal{H}}=1,
\end{equation}
such that 
\begin{equation}  \label{CASE7-CONTRA2}
\left(i\la^nI-\mathcal{A}_{a,0,0}%
\right)U^n=F^n:=(f^1_n,f^2_n,f^3_n,f^4_n,f^5_n)\to 0\quad \text{in}\quad 
\mathcal{H}.
\end{equation}
For simplicity, we drop the index $n$. Equivalenty, from %
\eqref{CASE7-CONTRA2}, we have 
\begin{eqnarray}
i\la v-z&=&f^1,  \label{CASE7-EQ1-EXP1} \\
i\la z-\frac{\alpha}{\rho}v_{xx}-\frac{\gamma}{\rho}u^3_x+\frac{a}{\rho}%
z&=&f^2,  \label{CASE7-EQ2-EXP1} \\
i\la u^1-u^2+u^3_x&=&f^3,  \label{CASE7-EQ3-EXP1} \\
i\la u^2+\frac{\mu}{\xi \varepsilon_3}u^1&=&f^4,  \label{CASE7-EQ4-EXP1} \\
i\la u^3+\frac{\mu}{\varepsilon_3}u^1_x-\frac{\gamma}{\varepsilon_3}%
z_x&=&f^5.  \label{CASE7-EQ5-EXP1}
\end{eqnarray}
Here we will check the condition \eqref{CASE7-CONTRA2} by finding a
contradiction with \eqref{CASE7-Condition2-EXP} by showing that $\|U\|_{%
\mathcal{H}}=o(1)$. For clarity, we divide the proof into several Lemmas. 

\begin{lemma}
\label{LEMM1-CASE7} The solution $(v,z,u^1,u^2,u^3)\in D(\mathcal{A}%
_{a,0,0}) $ of equation \eqref{CASE7-CONTRA2} satisfies the following
estimates: 
\begin{equation}  \label{LEMM1-CASE7-EQ1}
\int_0^L\abs{z}^2dx=o(1)\quad \text{and}\quad \int_0^L\abs{\la v}^2dx=o(1).
\end{equation}
\end{lemma}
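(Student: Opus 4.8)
The plan is to read off both estimates from the basic dissipation identity, together with the first resolvent equation \eqref{CASE7-EQ1-EXP1}; this is the easy opening step of the contradiction argument, so the work is short.

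First I would take the inner product of \eqref{CASE7-CONTRA2} with $U$ in $\mathcal{H}$ and take the real part. Since $\Re\langle i\la U,U\rangle_{\mathcal{H}}=0$ and, with $b=c=0$, one has $\Re\langle \mathcal{A}_{a,0,0}U,U\rangle_{\mathcal{H}}=-a\int_0^L\abs{z}^2dx$ (computed exactly as in Lemma \ref{Energy} and used already in the proof of Theorem \ref{THM-ST-ST1}), this yields
\[
a\int_0^L\abs{z}^2dx=\Re\langle F,U\rangle_{\mathcal{H}}\leq \|F\|_{\mathcal{H}}\|U\|_{\mathcal{H}}=\|F\|_{\mathcal{H}}.
\]
Since $a\neq 0$, $\|U\|_{\mathcal{H}}=1$ and $\|F\|_{\mathcal{H}}\to 0$ by \eqref{CASE7-CONTRA2}, the first estimate $\int_0^L\abs{z}^2dx=o(1)$ follows at once.

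For the second estimate I would use \eqref{CASE7-EQ1-EXP1} in the form $i\la v=z+f^1$, which gives $\abs{\la v}^2\leq 2\abs{z}^2+2\abs{f^1}^2$ pointwise and hence
\[
\int_0^L\abs{\la v}^2dx\leq 2\int_0^L\abs{z}^2dx+2\int_0^L\abs{f^1}^2dx.
\]
The first term is $o(1)$ by the previous step. For the second, since $F\in\mathcal{H}$ we have $f^1\in H_L^1(0,L)$ with $\sqrt{\alpha}\,\|f^1_x\|\leq\|F\|_{\mathcal{H}}$, so Poincaré's inequality (using $f^1(0)=0$) gives $\|f^1\|\leq c_p\|f^1_x\|\leq (c_p/\sqrt{\alpha})\|F\|_{\mathcal{H}}=o(1)$. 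Combining the two bounds yields $\int_0^L\abs{\la v}^2dx=o(1)$, which completes the lemma.

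There is essentially no obstacle in this particular lemma: the only thing to be careful about is that the damping coefficient $a$ does not vanish (the standing hypothesis of this section), which is exactly what makes the dissipation identity control $\|z\|$, and therefore, through \eqref{CASE7-EQ1-EXP1}, also $\|\la v\|$. The genuine difficulty in the overall argument lies in the later lemmas, where one must recover control of $u^1,u^2,u^3$ and of $v_x$ from this single velocity damping, uniformly in the frequency parameter $\la$.
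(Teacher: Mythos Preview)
Your proposal is correct and follows essentially the same approach as the paper: the dissipation identity obtained by pairing \eqref{CASE7-CONTRA2} with $U$ yields $\|z\|^2=o(1)$, and then \eqref{CASE7-EQ1-EXP1} together with $\|f^1\|=o(1)$ gives $\|\la v\|^2=o(1)$. Your explicit justification of $\|f^1\|=o(1)$ via Poincar\'e (since the $\mathcal{H}$-norm controls $f^1_x$, not $f^1$ directly) is a useful clarification that the paper leaves implicit.
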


\begin{proof}
First, taking the inner product of \eqref{CASE7-CONTRA2} with $U$ in $%
\mathcal{H}$, we obtain 
\begin{equation}  \label{LEMM1-CASE7-EQ2}
a\int_0^L\abs{z}^2=-\Re\left(\mathcal{A}_{a,b;c}U,F\right)_{\mathcal{H}%
}=\Re\left(F,U\right)_{\mathcal{H}}\leq \|U\|_{ \mathcal{H}}\|F\|_{\mathcal{H%
}}.
\end{equation}
Thus, from the above estimation and the fact that $\|F\|_{\mathcal{H}}=o(1)$
and $\|U\|_{\mathcal{H}}=1$, we obtain the first estimation in %
\eqref{LEMM1-CASE7-EQ1}. From \eqref{CASE7-EQ1-EXP1}, we deduce that 
\begin{equation}  \label{LEMM1-CASE7-EQ3}
\int_0^L\abs{\la v}^2dx\lesssim \int_0^L\abs{z}^2dx+\int_0^L\abs{f^1}^2dx.
\end{equation}
Finally, from \eqref{LEMM1-CASE7-EQ3}, the first estimation in %
\eqref{LEMM1-CASE7-EQ1}, we get the second estimation in %
\eqref{LEMM1-CASE7-EQ1}. The proof is thus complete.
\end{proof}


\begin{lemma}
\label{LEMM2-CASE7} The solution $(v,z,u^1,u^2,u^3)\in D(\mathcal{A}%
_{a,0,0}) $ of equation \eqref{CASE7-CONTRA2} satisfies the following
estimates: 
\begin{equation}  \label{LEMM2-CASE7-EQ0}
\int_0^L\abs{v_x}^2dx=o(1).
\end{equation}
\end{lemma}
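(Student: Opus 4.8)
The plan is to test the second resolvent equation \eqref{CASE7-EQ2-EXP1} against $\rho\overline v$ over $(0,L)$ and feed in Lemma \ref{LEMM1-CASE7}. Integrating $-\alpha\int_0^L v_{xx}\overline v\,dx$ by parts and using $v(0)=0$ produces $\alpha\int_0^L\abs{v_x}^2dx-\alpha v_x(L)\overline{v(L)}$, while \eqref{CASE7-EQ1-EXP1} turns $i\la\rho\int_0^L z\overline v\,dx$ into $-\rho\int_0^L\abs{z}^2dx-\rho\int_0^L z\overline{f^1}dx$. Collecting terms one arrives at
\begin{equation*}
\alpha\int_0^L\abs{v_x}^2dx=\rho\int_0^L\abs{z}^2dx+\rho\int_0^L z\overline{f^1}dx+\alpha v_x(L)\overline{v(L)}+\gamma\int_0^L u^3_x\overline v\,dx-a\int_0^L z\overline v\,dx+\rho\int_0^L f^2\overline v\,dx .
\end{equation*}
Every term on the right except the boundary term will be shown to be $o(1)$, and the boundary term will be disposed of by a trace estimate.

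By Lemma \ref{LEMM1-CASE7}, $\int_0^L\abs{z}^2dx=o(1)$ and $\|\la v\|_{L^2}=o(1)$, hence $\|v\|_{L^2}=\abs{\la}^{-1}\|\la v\|_{L^2}=o(\abs{\la}^{-1})$. Since $\|z\|_{L^2},\|f^1\|_{L^2},\|f^2\|_{L^2}$ are $o(1)$ (from $F\to0$ in $\mathcal H$) while $\|v_x\|_{L^2}\le\alpha^{-1/2}$ and $\|v\|_{L^2}$ are bounded, Cauchy--Schwarz makes $\int_0^L z\overline{f^1}dx$, $\int_0^L z\overline v\,dx$ and $\int_0^L f^2\overline v\,dx$ all $o(1)$. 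For the coupling term I would use \eqref{CASE7-EQ3-EXP1} to get $\|u^3_x\|_{L^2}\le\|f^3\|_{L^2}+\abs{\la}\|u^1\|_{L^2}+\|u^2\|_{L^2}=O(\abs{\la})$ (using $\|U\|_{\mathcal H}=1$), whence $\bigl|\gamma\int_0^L u^3_x\overline v\,dx\bigr|\le\gamma\|u^3_x\|_{L^2}\|v\|_{L^2}=O(\abs{\la})\cdot o(\abs{\la}^{-1})=o(1)$.

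The heart of the matter is the boundary term $\alpha v_x(L)\overline{v(L)}$, since $v_x(L)$ is not controlled by $\|U\|_{\mathcal H}$; I would bound its two factors separately. First, $v(0)=0$ gives $\abs{v(L)}^2=\int_0^L\frac{d}{dx}\abs{v}^2\,dx\le 2\|v_x\|_{L^2}\|v\|_{L^2}$, so the bounds above force $\abs{v(L)}=o(\abs{\la}^{-1/2})$. Secondly, the boundary condition built into $D(\mathcal A)$ reads $\alpha v_x(L)+\gamma u^3(L)=0$, so $\abs{v_x(L)}=\frac{\gamma}{\alpha}\abs{u^3(L)}$; combining the elementary one--dimensional interpolation inequality $\abs{u^3(L)}^2\le L^{-1}\|u^3\|_{L^2}^2+2\|u^3\|_{L^2}\|u^3_x\|_{L^2}$ with $\|u^3\|_{L^2}\le\varepsilon_3^{-1/2}$ and $\|u^3_x\|_{L^2}=O(\abs{\la})$ yields $\abs{v_x(L)}=O(\abs{\la}^{1/2})$. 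Hence $\bigl|\alpha v_x(L)\overline{v(L)}\bigr|=O(\abs{\la}^{1/2})\cdot o(\abs{\la}^{-1/2})=o(1)$.

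Inserting all of this into the identity above gives $\alpha\int_0^L\abs{v_x}^2dx=o(1)$, which is \eqref{LEMM2-CASE7-EQ0}. The main obstacle is precisely the boundary term: the trace $v_x(L)$ genuinely blows up like $\abs{\la}^{1/2}$, but this is exactly cancelled by the decay $\abs{v(L)}=o(\abs{\la}^{-1/2})$ coming from the clamping $v(0)=0$ together with the $\la v$--estimate of Lemma \ref{LEMM1-CASE7}, while the boundary condition $\alpha v_x(L)+\gamma u^3(L)=0$ is what lets us replace the uncontrolled derivative trace by a trace of $u^3$, which the membership $u^3\in H^1(0,L)$ makes accessible.
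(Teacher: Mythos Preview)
Your proof is correct. Both you and the paper test \eqref{CASE7-EQ2-EXP1} against $\rho\overline v$, rely on the boundary condition $\alpha v_x(L)+\gamma u^3(L)=0$, and use the same trace bounds $\abs{v(L)}=o(\abs{\la}^{-1/2})$ and $\abs{u^3(L)}=O(\abs{\la}^{1/2})$. The difference lies in how the coupling term $-\gamma\int_0^L u^3_x\overline v\,dx$ is handled. The paper integrates it by parts together with the $v_{xx}$ term, so the two boundary contributions combine to $(\alpha v_x(L)+\gamma u^3(L))\overline{v(L)}=0$ at once, leaving $\alpha\int_0^L\abs{v_x}^2dx+\gamma\int_0^L u^3\overline{v_x}\,dx=o(1)$; it then brings in a \emph{second} multiplier, testing \eqref{CASE7-EQ3-EXP1} against $\gamma\overline v$, to show $\gamma\int_0^L u^3\overline{v_x}\,dx=o(1)$ (the boundary term $\gamma u^3(L)\overline{v(L)}$ reappears here and is killed by the trace estimates). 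You instead leave the coupling term unintegrated as $\gamma\int_0^L u^3_x\overline v\,dx$ and estimate it directly via $\|u^3_x\|_{L^2}\|v\|_{L^2}=O(\abs{\la})\cdot o(\abs{\la}^{-1})$, at the cost of treating the boundary term $\alpha v_x(L)\overline{v(L)}$ separately by invoking $\alpha v_x(L)=-\gamma u^3(L)$. Your route is a little more economical --- one multiplier identity instead of two --- while the paper's arrangement makes the role of the boundary condition more transparent at the first step; the substance is the same.
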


\begin{proof}
Multiplying \eqref{CASE7-EQ2-EXP1} by $\rho \overline{v}$ integrating by
parts over $(0,L)$, we get 
\begin{equation}  \label{LEMM2-CASE7-EQ1}
i\la \rho\int_0^Lz\overline{v}dx+\alpha\int_0^L\abs{v_x}^2dx+\gamma%
\int_0^Lu^3\overline{v_x}dx+a\int_0^Lz\overline{v}dx=\rho\int_0^Lf^2%
\overline{v}dx.
\end{equation}
Using Lemma \ref{LEMM1-CASE7} and the fact that $\|f^2\|_{L^2(0,L)}=o(1)$,
we get 
\begin{equation}  \label{LEMM2-CASE7-EQ2}
\left|i\la \rho\int_0^Lz\overline{v}dx\right|=o(1),\quad \left|a\int_0^Lz%
\overline{v}dx\right|=o(\abs{\la}^{-1})\quad \text{and}\quad
\left|\rho\int_0^Lf^2\overline{v}dx\right|=o(\abs{\la}^{-1}).
\end{equation}
Inserting \eqref{LEMM2-CASE7-EQ2} in \eqref{LEMM2-CASE7-EQ1}, we get 
\begin{equation}  \label{LEMM2-CASE7-EQ3}
\alpha\int_0^L\abs{v_x}^2dx+\gamma\int_0^Lu^3\overline{v_x}dx=o(1).
\end{equation}
Now, multiplying \eqref{CASE7-EQ3-EXP1} by $\gamma\overline{v}$ integrating
by parts over $(0,L)$, we get 
\begin{equation}  \label{LEMM2-CASE7-EQ4}
i\la \gamma\int_0^Lu^1\overline{v}dx-\gamma\int_0^Lu^2\overline{v}%
dx-\gamma\int_0^Lu^3\overline{v_x}dx+\gamma u^3(L)\overline{v}%
(L)=\gamma\int_0^Lf^3\overline{v}dx.
\end{equation}
Using the facts that $u^1$, $u^2$ are uniformly bounded in $L^2(0,L)$,
equation \eqref{LEMM1-CASE7-EQ1} and the fact that $\|f^3\|_{H_0^1(0,L)}=o(1)
$, we get 
\begin{equation}  \label{LEMM2-CASE7-EQ5}
\left|i\la \gamma\int_0^Lu^1\overline{v}dx\right|=o(1),\quad
\left|\gamma\int_0^Lu^2\overline{v}dx\right|=o(\abs{\la}^{-1})\quad \text{and%
}\quad \left|\gamma\int_0^Lf^3\overline{v}dx\right|=o(\abs{\la}^{-1}).
\end{equation}
From \eqref{CASE7-EQ3-EXP1}, it is easy to see that $\|u^3_x|_{L^2(0,L)}=O%
\left(\abs{\la}\right)$. Using Galgliardo-Nirenberg inequality, $\|u^3\|$
and $\|v_x\|$ are uniformly bounded in $L^2(0,L)$ and \eqref{LEMM1-CASE7-EQ1}%
, we get 
\begin{equation}  \label{LEMM2-CASE7-EQ6}
\left\{%
\begin{array}{l}
\abs{u^3(L)}\lesssim \|u^3_x\|^{\frac{1}{2}}\|u^3\|^{\frac{1}{2}%
}+\|u^3\|\lesssim O(\abs{\la}^{\frac{1}{2}}), \\[0.1in] 
\abs{v(L)}\lesssim \|v_x\|^{\frac{1}{2}}\|v\|^{\frac{1}{2}}+\|v\|\lesssim o(%
\abs{\la}^{-\frac{1}{2}}).%
\end{array}
\right.
\end{equation}
Inserting \eqref{LEMM2-CASE7-EQ5} and \eqref{LEMM2-CASE7-EQ6} in %
\eqref{LEMM2-CASE7-EQ4}, we get 
\begin{equation*}
\left|\gamma\int_0^Lu^3\overline{v_x}dx\right|=o(1).
\end{equation*}
Inserting the above estimation in \eqref{LEMM2-CASE7-EQ3}, we get %
\eqref{LEMM2-CASE7-EQ0}. The proof is thus completed.
\end{proof}


\begin{lemma}
\label{LEMM3-CASE7} The solution $(v,z,u^1,u^2,u^3)\in D(\mathcal{A}%
_{a,0,0}) $ of equation \eqref{CASE7-CONTRA2} satisfies the following
estimates: 
\begin{equation}  \label{LEMM3-CASE7-EQ1}
\int_0^L\abs{u^2_x}^2dx=o(1)\quad \text{and}\quad \int_0^L\abs{u^2}^2dx=o(1).
\end{equation}
\end{lemma}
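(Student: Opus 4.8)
The plan is to establish the two estimates in \eqref{LEMM3-CASE7-EQ1} separately: the bound on $\|u^2\|$ by a direct multiplier applied to \eqref{CASE7-EQ4-EXP1}, and the bound on $\|u^2_x\|$ through the compatibility condition \eqref{compatibility}, which will require us to establish $\int_0^L\abs{u^3}^2dx=o(1)$ along the way. Throughout I use the normalization $\|U\|_{\mathcal H}=1$ (so that $\sqrt{\mu}\,\|u^1\|$, $\sqrt{\varepsilon_3}\,\|u^3\|$, $\sqrt{\xi\varepsilon_3}\,\|u^2\|$ are all $\le 1$), the smallness $\|F\|_{\mathcal H}=o(1)$, the estimates $\|z\|=o(1)$, $\|\la v\|=o(1)$ of Lemma \ref{LEMM1-CASE7}, the estimate $\|v_x\|=o(1)$ of Lemma \ref{LEMM2-CASE7}, and the intermediate identity $\gamma\int_0^Lu^3\overline{v_x}dx=o(1)$ obtained in the proof of Lemma \ref{LEMM2-CASE7}.

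First I would multiply \eqref{CASE7-EQ4-EXP1} by $\overline{u^2}$ and integrate over $(0,L)$, obtaining $i\la\int_0^L\abs{u^2}^2dx=-\tfrac{\mu}{\xi\varepsilon_3}\int_0^Lu^1\overline{u^2}dx+\int_0^Lf^4\overline{u^2}dx$. Since $\sqrt{\mu}\,\|u^1\|\le 1$ and $\sqrt{\xi\varepsilon_3}\,\|f^4\|\le\|F\|_{\mathcal H}$ is bounded, the right-hand side is $\le C\|u^2\|$, whence $\abs{\la}\,\|u^2\|^2\le C\|u^2\|$ and therefore $\|u^2\|=O(\abs{\la}^{-1})=o(1)$. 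I keep the quantitative rate $O(\abs{\la}^{-1})$, which is indispensable below.

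For the first estimate I would use the compatibility condition written as $\xi u^2_x=u^3-\tfrac{\gamma}{\varepsilon_3}v_x$, so that by Lemma \ref{LEMM2-CASE7} it suffices to show $\int_0^L\abs{u^3}^2dx=o(1)$. Substituting $u^3=\xi u^2_x+\tfrac{\gamma}{\varepsilon_3}v_x$ into $\gamma\int_0^Lu^3\overline{v_x}dx=o(1)$ and using $\|v_x\|=o(1)$ gives $\int_0^Lu^2_x\overline{v_x}dx=o(1)$. I then integrate by parts---the boundary terms vanish because $u^2\in H_0^1(0,L)$---to get $\int_0^Lu^2_x\overline{v_x}dx=-\int_0^Lu^2\overline{v_{xx}}dx$, and replace $v_{xx}$ via \eqref{CASE7-EQ2-EXP1}, namely $\tfrac{\alpha}{\rho}v_{xx}=i\la z+\tfrac{a}{\rho}z-f^2-\tfrac{\gamma}{\rho}u^3_x$. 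The contributions of $\tfrac{a}{\rho}z$ and $f^2$ are trivially $o(1)$ (bounded by $\|u^2\|\|z\|$ and $\|u^2\|\|f^2\|$); the $\la$-weighted contribution $i\la\int_0^Lu^2\overline{z}dx$ is controlled by $\abs{\la}\,\|u^2\|\,\|z\|\le C\|z\|=o(1)$ precisely because $\|u^2\|=O(\abs{\la}^{-1})$; and the remaining term $\int_0^Lu^2\overline{u^3_x}dx$ I integrate by parts once more into $-\int_0^Lu^2_x\overline{u^3}dx$, then expand via \eqref{compatibility} as $\int_0^Lu^2_x\overline{u^3}dx=\tfrac{1}{\xi}\int_0^L\abs{u^3}^2dx-\tfrac{\gamma}{\xi\varepsilon_3}\int_0^Lv_x\overline{u^3}dx=\tfrac{1}{\xi}\int_0^L\abs{u^3}^2dx+o(1)$, using $\gamma\int_0^Lu^3\overline{v_x}dx=o(1)$ again. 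Collecting terms, the identity reduces to $o(1)=-\tfrac{\gamma}{\alpha\xi}\int_0^L\abs{u^3}^2dx+o(1)$, so $\int_0^L\abs{u^3}^2dx=o(1)$, and then $\int_0^L\abs{u^2_x}^2dx\le\tfrac{2}{\xi^2}\big(\int_0^L\abs{u^3}^2dx+\tfrac{\gamma^2}{\varepsilon_3^2}\int_0^L\abs{v_x}^2dx\big)=o(1)$.

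The hard part is the $\|u^2_x\|$ bound. Because the system \eqref{CASE7-EQ1-EXP1}--\eqref{CASE7-EQ5-EXP1} together with \eqref{compatibility} is rigidly consistent, no multiplier applied directly to \eqref{CASE7-EQ4-EXP1} (or to its $x$-derivative) produces any information on $u^2_x$---every such computation collapses to the compatibility identity satisfied by $F$---so one is forced to pass through $u^3$; and the subtle point there is that the bare fact $\|u^2\|=o(1)$ does not suffice, one genuinely needs the sharp rate $\|u^2\|=O(\abs{\la}^{-1})$ to absorb the $\la$-weighted cross term $i\la\int_0^Lu^2\overline{z}\,dx$ coming from $v_{xx}$, together with repeated use of \eqref{compatibility} and of the homogeneous boundary conditions $u^2(0)=u^2(L)=0$.
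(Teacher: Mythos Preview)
Your proof is correct but takes a different route from the paper's. You first extract the quantitative rate $\|u^2\|=O(|\la|^{-1})$ from \eqref{CASE7-EQ4-EXP1}$\times\overline{u^2}$, then establish $\|u^3\|=o(1)$ via your chain of identities, and finally recover $\|u^2_x\|=o(1)$ from the compatibility condition. The paper reverses the order: it multiplies \eqref{CASE7-EQ2-EXP1} by $\overline{u^2}$, integrates by parts, and substitutes $u^3=\xi u^2_x+\tfrac{\gamma}{\varepsilon_3}v_x$ from \eqref{compatibility} to obtain directly
\[
i\la\int_0^L z\,\overline{u^2}\,dx+\frac{\gamma\xi}{\rho}\int_0^L|u^2_x|^2\,dx=o(1),
\]
kills the cross term by multiplying \eqref{CASE7-EQ4-EXP1} by $\overline{z}$ (which gives $i\la\int_0^L u^2\overline{z}\,dx=o(1)$ without needing the sharp rate on $\|u^2\|$), and then gets $\|u^2\|=o(1)$ by Poincar\'e. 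In fact your step $\int_0^L u^2_x\overline{v_x}\,dx=-\int_0^L u^2\overline{v_{xx}}\,dx$ followed by substitution of $v_{xx}$ is precisely the multiplier \eqref{CASE7-EQ2-EXP1}$\times\overline{u^2}$ in disguise; the real difference is only which variable ($u^3$ versus $u^2_x$) you eliminate via \eqref{compatibility}, and how you dispatch the $\la$-weighted cross term. Your detour has the byproduct of already proving $\|u^3\|=o(1)$ here (the paper defers this to Lemma~\ref{LEMM4-CASE7}), but the paper's path is shorter and shows that your closing claim---that one is \emph{forced} to pass through $u^3$---is not quite accurate: multiplying \eqref{CASE7-EQ2-EXP1} (rather than \eqref{CASE7-EQ4-EXP1}) by $\overline{u^2}$ produces $\|u^2_x\|^2$ directly.
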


\begin{proof}
Multiplying \eqref{CASE7-EQ2-EXP1} by $\overline{u^2}$, integrating by parts
over $0,L)$, we get 
\begin{equation}  \label{LEMM3-CASE7-EQ2}
i\la \int_0^Lz\overline{u^2}dx+\frac{\alpha}{\rho}\int_0^Lv_x\overline{u^2_x}%
dx+\frac{\gamma}{\rho}\int_0^Lu^3\overline{u^2_x}dx+\frac{a}{\rho}\int_0^Lz%
\overline{u^2}dx=\int_0^Lf^2\overline{u^2}dx
\end{equation}
From the compatibility condition, we have 
\begin{equation*}
u^3=\xi u^2_x+\frac{\gamma}{\varepsilon_3}v_x. 
\end{equation*}
Inserting the above equation in \eqref{LEMM3-CASE7-EQ2}, we get 
\begin{equation}  \label{LEMM3-CASE7-EQ3}
i\la \int_0^Lz\overline{u^2}dx+\left(\frac{\alpha}{\rho}+\frac{\gamma^2}{%
\rho\varepsilon_3}\right)\int_0^Lv_x\overline{u^2_x}dx+\frac{\gamma\xi}{\rho}%
\int_0^L\abs{u^2_x}^2dx+\frac{a}{\rho}\int_0^Lz\overline{u^2}dx=\int_0^Lf^2%
\overline{u^2}dx.
\end{equation}
Using the facts that $u^2_x$ and $u^2$ are uniformly bounded in $L^2(0,L)$, %
\eqref{LEMM1-CASE7-EQ1}, \eqref{LEMM2-CASE7-EQ0} and $\|f^2\|_{L^2(0,L)}=o(1)
$, we get 
\begin{equation*}
\left|\int_0^Lv_x\overline{u^2_x}dx\right|=o(1),\quad \left|\int_0^Lz%
\overline{u^2}dx\right|=o(1)\text{and}\quad \left|\int_0^Lf^2\overline{u^2}%
dx\right|=o(1).
\end{equation*}
Inserting the above estimations in \eqref{LEMM3-CASE7-EQ3}, we get 
\begin{equation}  \label{LEMM3-CASE7-EQ4}
i\la \int_0^Lz\overline{u^2}dx+\frac{\gamma\xi}{\rho_3}\int_0^L\abs{u^2_x}%
^2dx=o(1).
\end{equation}
Multiplying \eqref{CASE7-EQ4-EXP1} by $\overline{z}$ integrating over $(0,L)$%
, using the fact that $u^1$ is uniformly bounded in $L^2(0,L)$, $%
\|f_4\|_{L^2(0,L)}=o(1)$ and \eqref{LEMM1-CASE7-EQ1}, we get we get 
\begin{equation*}
i\la \int_0^Lu^2\overline{z}dx+\underbrace{\frac{\mu}{\xi\varepsilon_3}%
\int_0^Lu^1\overline{z}dx}_{o(1)}=\underbrace{\int_0^Lf^4\overline{z}dx}%
_{o(1)}.
\end{equation*}
Inserting the above estimation in \eqref{LEMM3-CASE7-EQ4}, we get the first
estimation in \eqref{LEMM3-CASE7-EQ1}. Using the first estimation in %
\eqref{LEMM3-CASE7-EQ1} and Poincr\'e inequality, we get the second
estimation in \eqref{LEMM3-CASE7-EQ1}. The proof has been completed.
\end{proof}


\begin{lemma}
\label{LEMM4-CASE7} The solution $(v,z,u^1,u^2,u^3)\in D(\mathcal{A}%
_{a,0,0}) $ of equation \eqref{CASE7-CONTRA2} satisfies the following
estimates: 
\begin{equation}  \label{LEMM4-CASE7-EQ1}
\int_0^L\abs{u^3}^2dx=o(1)\quad \text{and}\quad \int_0^L\abs{u^1}^2dx=o(1).
\end{equation}
\end{lemma}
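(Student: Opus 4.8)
The plan is to obtain the bound on $u^{3}$ for free from the compatibility condition, and then to extract the bound on $u^{1}$ from the fourth resolvent equation \eqref{CASE7-EQ4-EXP1}, the only delicate point being a $\la$-weighted cross term.

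First I would use \eqref{compatibility} in the form $u^{3}=\xi u^{2}_{x}+\frac{\gamma}{\varepsilon_{3}}v_{x}$, so that
\begin{equation*}
\int_{0}^{L}\abs{u^3}^{2}dx\leq 2\xi^{2}\int_{0}^{L}\abs{u^2_x}^{2}dx+\frac{2\gamma^{2}}{\varepsilon_{3}^{2}}\int_{0}^{L}\abs{v_x}^{2}dx ,
\end{equation*}
and the right-hand side is $o(1)$ by Lemma \ref{LEMM2-CASE7} and Lemma \ref{LEMM3-CASE7}. This already gives the first estimate in \eqref{LEMM4-CASE7-EQ1}.

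For the second estimate I would multiply \eqref{CASE7-EQ4-EXP1} by $\overline{u^{1}}$ and integrate over $(0,L)$, obtaining
\begin{equation*}
i\la\int_{0}^{L}u^{2}\overline{u^{1}}dx+\frac{\mu}{\xi\varepsilon_{3}}\int_{0}^{L}\abs{u^1}^{2}dx=\int_{0}^{L}f^{4}\overline{u^{1}}dx ,
\end{equation*}
where the right-hand side is $o(1)$ because $\|f^{4}\|_{L^{2}(0,L)}=o(1)$ while $\|u^{1}\|_{L^{2}(0,L)}$ is bounded by $\|U\|_{\mathcal{H}}=1$. The main obstacle is the term $i\la\int_{0}^{L}u^{2}\overline{u^{1}}dx$, since no a priori smallness of $\la u^{1}$ or of $\la u^{2}$ is available; the factor $\la$ must therefore be eliminated. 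I would do this by conjugating \eqref{CASE7-EQ3-EXP1}, namely $i\la\overline{u^{1}}=-\overline{u^{2}}+\overline{u^{3}_{x}}-\overline{f^{3}}$, which turns the cross term into $-\int_{0}^{L}\abs{u^2}^{2}dx+\int_{0}^{L}u^{2}\overline{u^{3}_{x}}dx-\int_{0}^{L}u^{2}\overline{f^{3}}dx$; an integration by parts in the middle integral, legitimate because $u^{2}(0)=u^{2}(L)=0$ for $u^{2}\in H^{1}_{0}(0,L)$, rewrites it as $-\int_{0}^{L}u^{2}_{x}\overline{u^{3}}dx$. Then $\|u^{2}\|=o(1)$ and $\|u^{2}_{x}\|=o(1)$ by Lemma \ref{LEMM3-CASE7}, $\|u^{3}\|=o(1)$ by the first step, and $\|f^{3}\|=o(1)$, so each of the three integrals is $o(1)$; hence $i\la\int_{0}^{L}u^{2}\overline{u^{1}}dx=o(1)$ and therefore $\frac{\mu}{\xi\varepsilon_{3}}\int_{0}^{L}\abs{u^1}^{2}dx=o(1)$, which is exactly the second estimate.

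In summary, the whole difficulty lies in the $\la$-weighted cross term; the device that makes everything work is to transfer the $\la$ onto controlled quantities via the third resolvent equation \eqref{CASE7-EQ3-EXP1} and then to shift a derivative by integration by parts, exploiting the homogeneous Dirichlet data on $u^{2}$, after which every remaining term is a product of two factors already shown to be $o(1)$.
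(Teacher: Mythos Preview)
Your proof is correct and follows essentially the same approach as the paper. Both arguments obtain the $u^{3}$ estimate directly from the compatibility condition together with Lemmas \ref{LEMM2-CASE7} and \ref{LEMM3-CASE7}, and both control the cross term $i\la\int_{0}^{L}u^{2}\overline{u^{1}}\,dx$ by passing the factor $\la$ through equation \eqref{CASE7-EQ3-EXP1} and integrating by parts using $u^{2}\in H^{1}_{0}(0,L)$; the only cosmetic difference is that the paper multiplies \eqref{CASE7-EQ3-EXP1} by $\overline{u^{2}}$ and then takes a conjugate, whereas you substitute the conjugated identity for $i\la\overline{u^{1}}$ directly into the cross term.
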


\begin{proof}
First, we prove the first estimation in \eqref{LEMM4-CASE7-EQ1}. For this
aim, Using the compatibility condition \eqref{compatibility}, %
\eqref{LEMM3-CASE7-EQ1} and \eqref{LEMM2-CASE7-EQ0}, we get 
\begin{equation}  \label{LEMM4-CASE7-EQ2}
\int_0^L\abs{u^3}^2dx\leq 2\xi^2\int_0^L\abs{u^2_x}^2dx+2\frac{\gamma^2}{%
\varepsilon_3^2}\int_0^L\abs{v_x}^2dx\leq o(1).
\end{equation}
Now, we prove the second estimation in \eqref{LEMM4-CASE7-EQ1}. For this
aim, multiplying \eqref{CASE7-EQ4-EXP1} by $\overline{u^1}$, integrating
over $(0,L)$ and using the facts that $u^1$ is uniformly bounded in $L^2(0,L)
$ and $\|f^4\|_{L^2(0,L)}=o(1)$, we get 
\begin{equation}  \label{LEMM4-CASE7-EQ3}
i\la \int_0^Lu^2\overline{u^1}dx+\frac{\mu}{\xi\varepsilon_3}\int_0^L%
\abs{u^1}^2dx=o(1).
\end{equation}
Now, multiplying \eqref{CASE7-EQ3-EXP1} by $\overline{u^2}$, integrating by
parts over $(0,L)$ and using the fact that $\|f_3\|_{L^2(0,L)}=o(1)$, %
\eqref{LEMM3-CASE7-EQ1} and the first estimation in \eqref{LEMM4-CASE7-EQ1},
we get 
\begin{equation}  \label{LEMM4-CASE7-EQ4}
i\la \int_0^Lu^1\overline{u^2}dx-\underbrace{\int_0^Lu^3\overline{u_x^2}dx}%
_{o(1)}=o(1).
\end{equation}
Inserting \eqref{LEMM3-CASE7-EQ4} in \eqref{LEMM3-CASE7-EQ3}, we get the
second estimation in \eqref{LEMM3-CASE7-EQ1}. The proof has been completed.
\end{proof}

\noindent \textbf{Proof of Theorem \ref{EXP-CASE7}}. From Lemmas \ref%
{LEMM1-CASE7}-\eqref{LEMM4-CASE7}, we obtain $\|U\|_{\mathcal{H}}=o(1)$,
which contradicts \eqref{CASE7-CONTRA1}. This implies that 
\begin{equation*}
\sup_{\la \in \mathbb{R}}\|(i\la I-\mathcal{A}_{a,0,0})^{-1}\|_{\mathcal{H}%
}<\infty. 
\end{equation*}
Finally, according to Huang-Pruss theorem, we obtain the desired result. The
proof is thus complete.

\section{Numerical Results}

\noindent In this section, we will numerically illustrate the exponential
decay of the natural energy $E(t)$ associated to \eqref{Lorenz} system. To
carry out the numerical simulations, we first re-write the second-order
Lorenz system in a first-order form in time and then we discretize the
resulted system using a second-order centered finite difference
approximation for space and the second-order implicit backward
differentiation formula for time. The computational domain considered is $%
[0, 1]$ and the time interval is $[0, 100]$. For simplicity all the
parameter in Lorenz system are set to one. The following initial conditions
are used:

\begin{equation*}
(v,\phi,\theta,\eta)(\cdot,0)=(10^{-2}\sin{3\pi x},\cos{\pi x},\sin{\pi x}%
,\pi \cos{\pi x})
\end{equation*}

\begin{equation*}
(v_t,\phi_t,\theta_t,\eta_t)(\cdot,0)=(10^{2}\sin{3\pi x},0,0,0).\vspace{3mm}
\end{equation*}

\noindent Our results are presented in Figures \ref{case1}, \ref{case5} and \ref{Et}.
First, in figure \ref{case1} we show $v_t, v_x,\theta_t+\phi_x,\eta_t+\phi,
\theta-\eta_x$ as well as the natural energy $E(t)$ in the case where $a=0$, 
$b=0$ and $c=0$. The conservation of the natural energy is clearly shown in
this case. Then, we consider the following six cases:\newline

\begin{enumerate}
\item[$\mathbf{Case 1:}$] $(a,b,c)= (1,1,1)$.\\[-1mm]

\item[$\mathbf{Case 2:}$] $a=0$ and $(b,c)= (1,1)$.\\[-1mm]

\item[$\mathbf{Case 3:}$] $b=0$ and $(a,c)= (1,1)$.\\[-1mm]

\item[$\mathbf{Case 4:}$] $c=0$ and $(a,b)= (1,1)$.\\[-1mm]

\item[$\mathbf{Case 5:}$] $a=1$ and $(b,c)=(0,0)$.\\[-1mm]


\item[$\mathbf{Case 6:}$] $c=1$ and $(a,b)=(0,0)$.\\[-1mm]
\end{enumerate}

\noindent The results for case 5, where $a=1$, $b=0$ and $c=0$, is presented
in figure \ref{case5}. As can be seen, we obtained an exponential decay of
the numerical solutions $v_t, v_x,\theta_t+\phi_x,\eta_t+\phi$, $%
\theta-\eta_x$ as well as the natural energy $E(t)$. This is consistent with
our theoretical results. For all the above mentioned cases, we obtained
numerical results similar to figure \ref{case5} showing an exponential decay
of the solutions as expected by our theoretical results. The figures are not
presented here to avoid repetition. However, we present the natural energy
for all case in figure \ref{Et}.

\begin{figure}[h]
\begin{center}
\begin{tabular}{ccc}
\includegraphics[width=5.5cm]{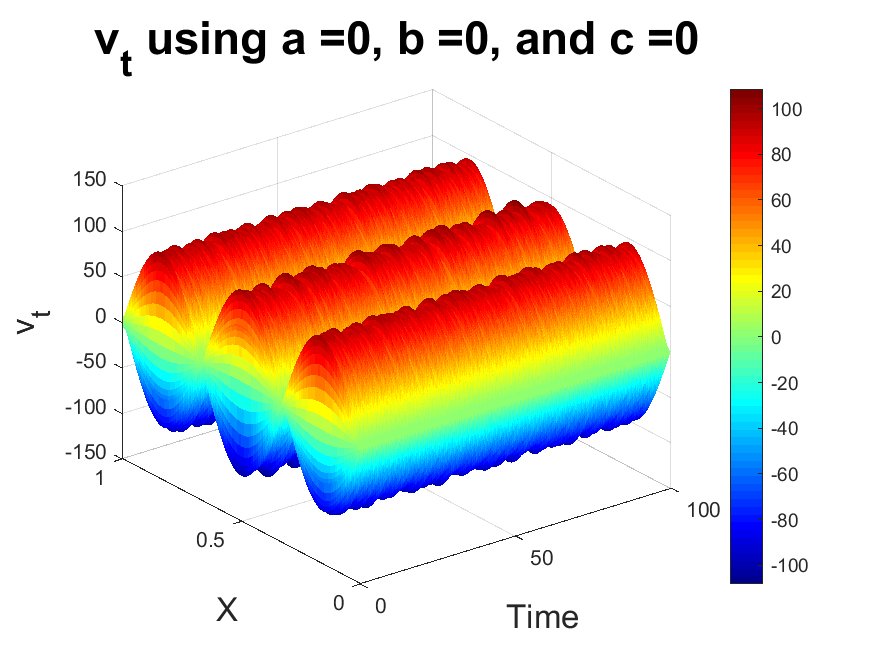} & %
\includegraphics[width=5.5cm]{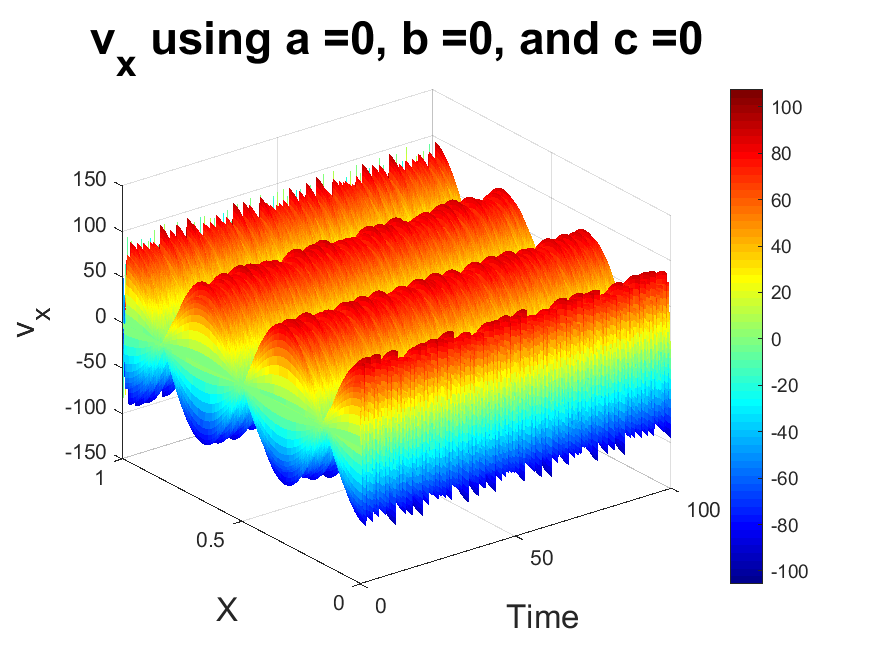} & %
\includegraphics[width=5.5cm]{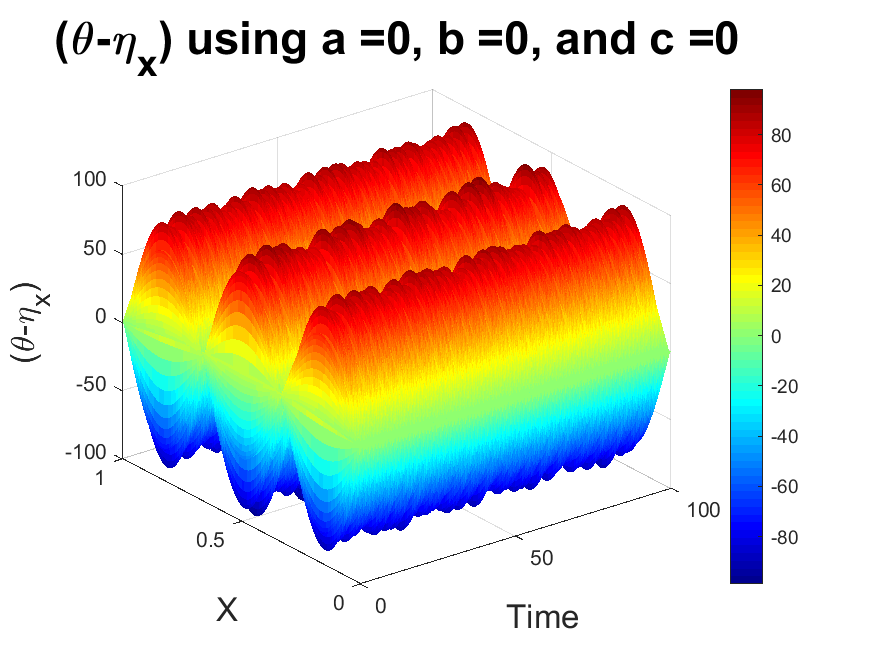} \\ 
\includegraphics[width=5.5cm]{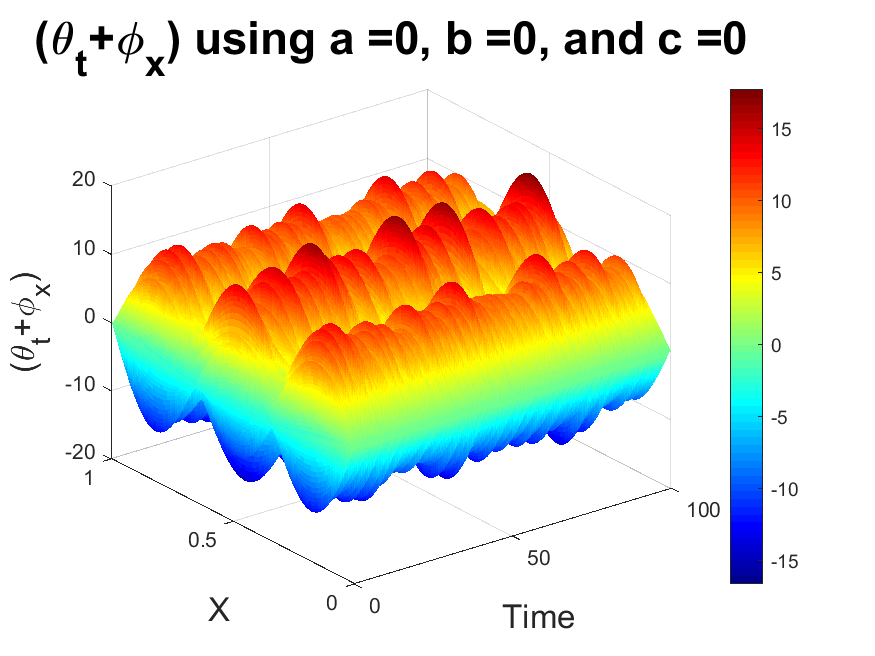} & %
\includegraphics[width=5.5cm]{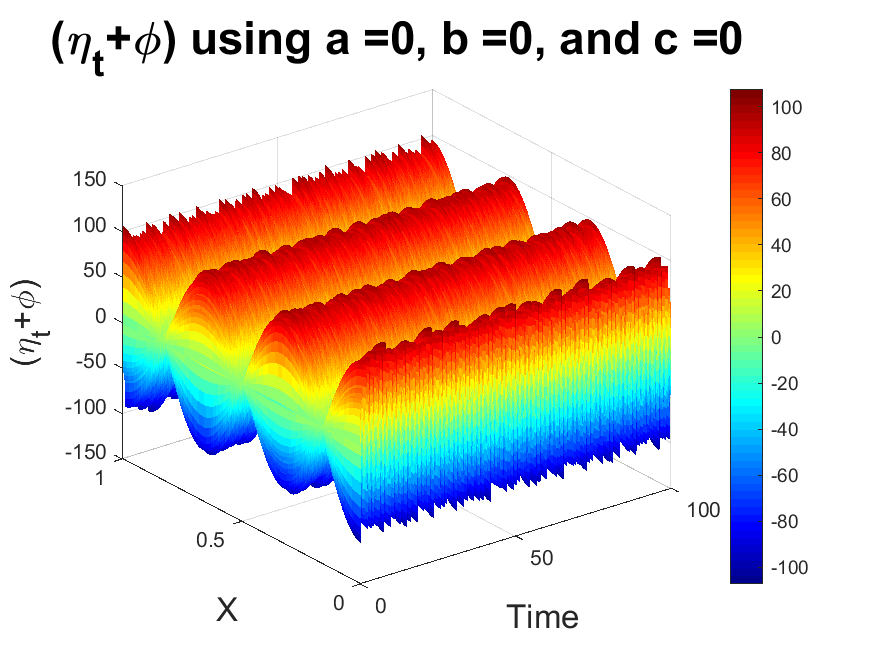} & %
\includegraphics[width=5.5cm]{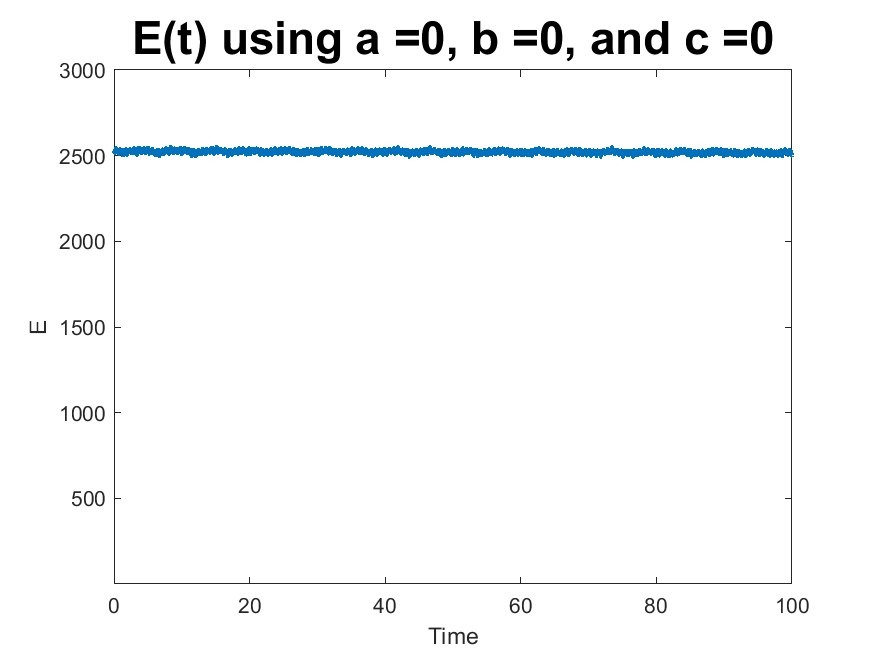} \\ 
&  & 
\end{tabular}
\end{center}
\caption{Space and time numerical solutions when $a=0$, $b=0$ and $c=0$. }
\label{case1}
\end{figure}

\begin{figure}[h]
\begin{center}
\begin{tabular}{ccc}
\includegraphics[width=5.5cm]{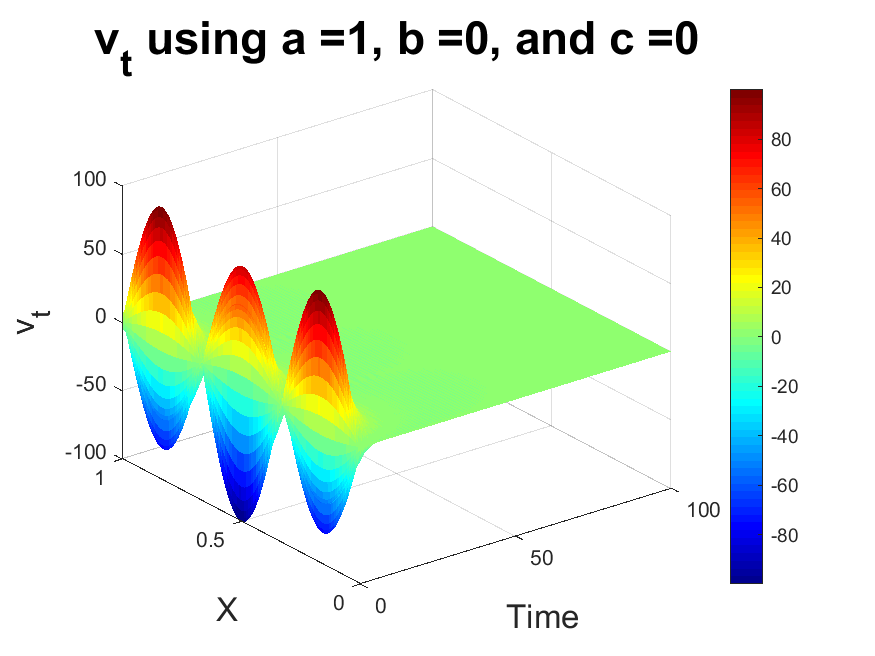} & %
\includegraphics[width=5.5cm]{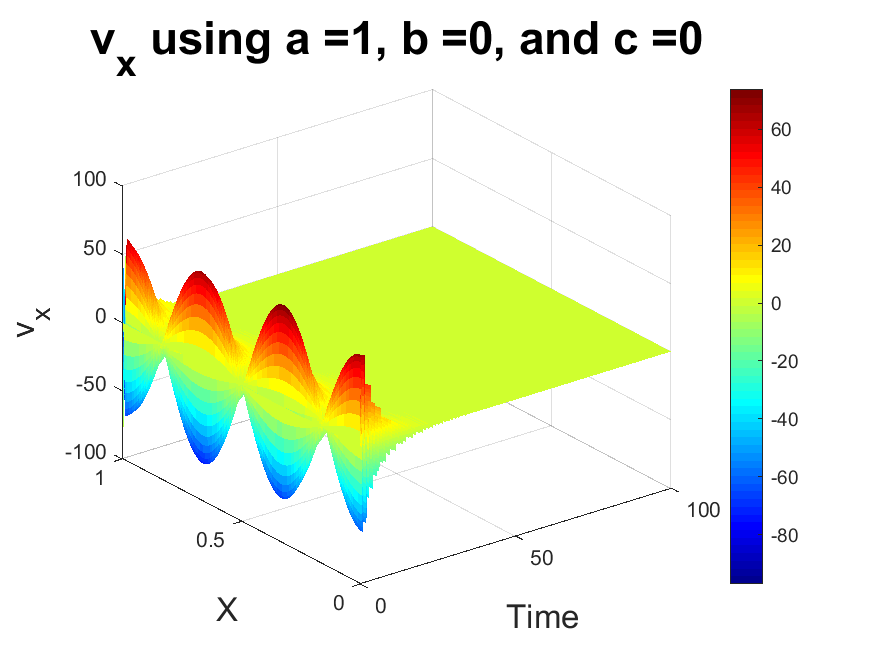} & %
\includegraphics[width=5.5cm]{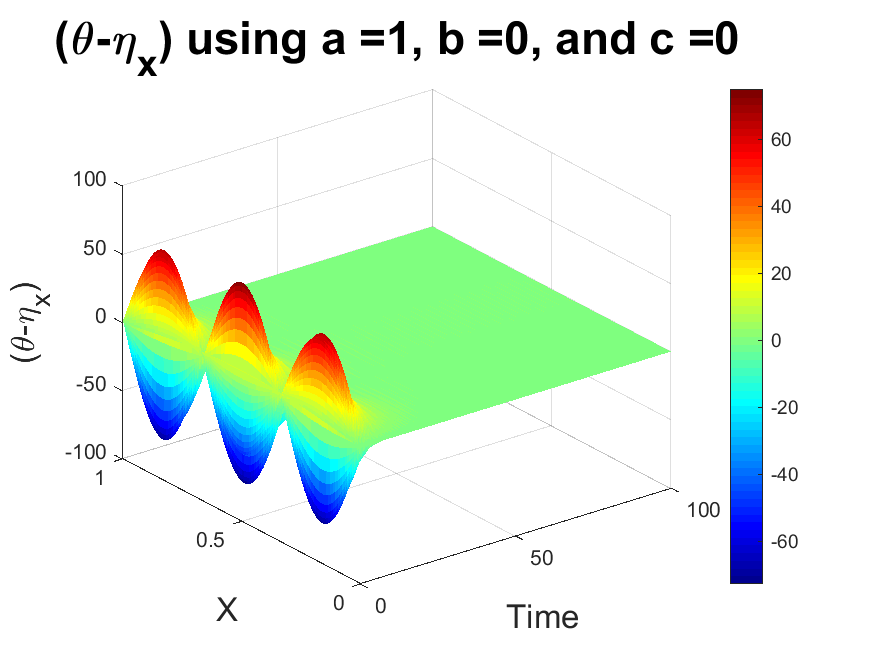} \\ 
\includegraphics[width=5.5cm]{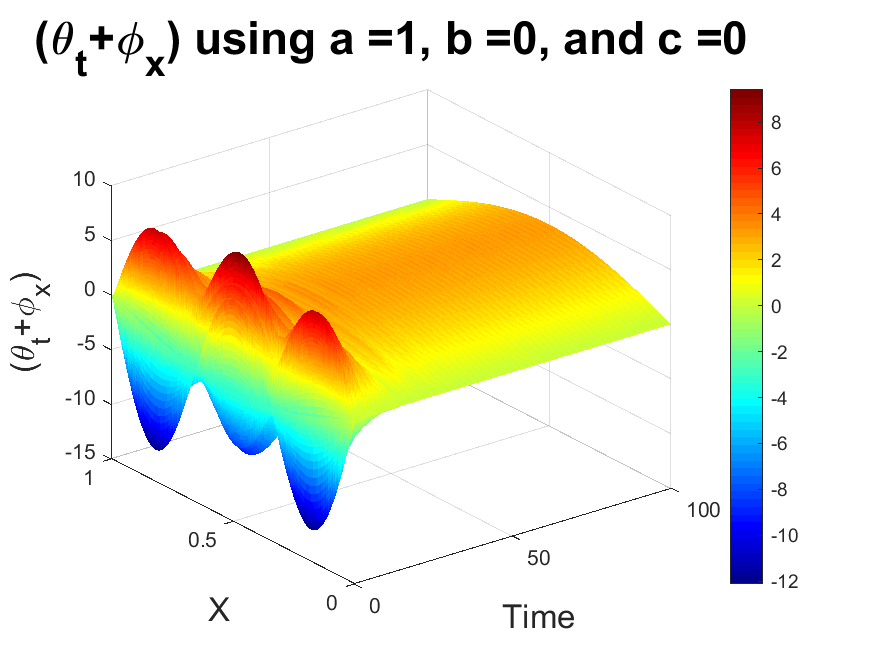} & %
\includegraphics[width=5.5cm]{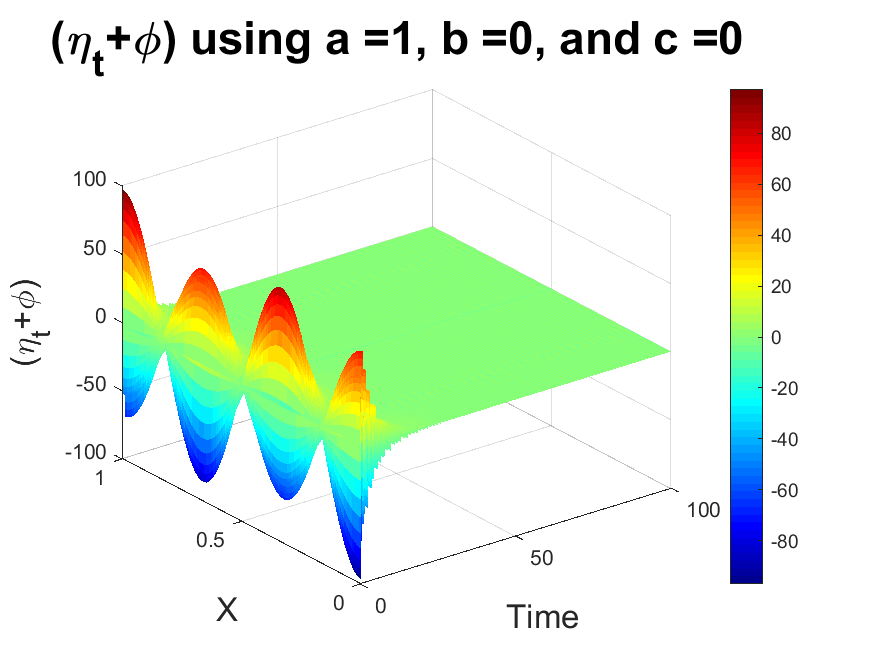} & %
\includegraphics[width=5.5cm]{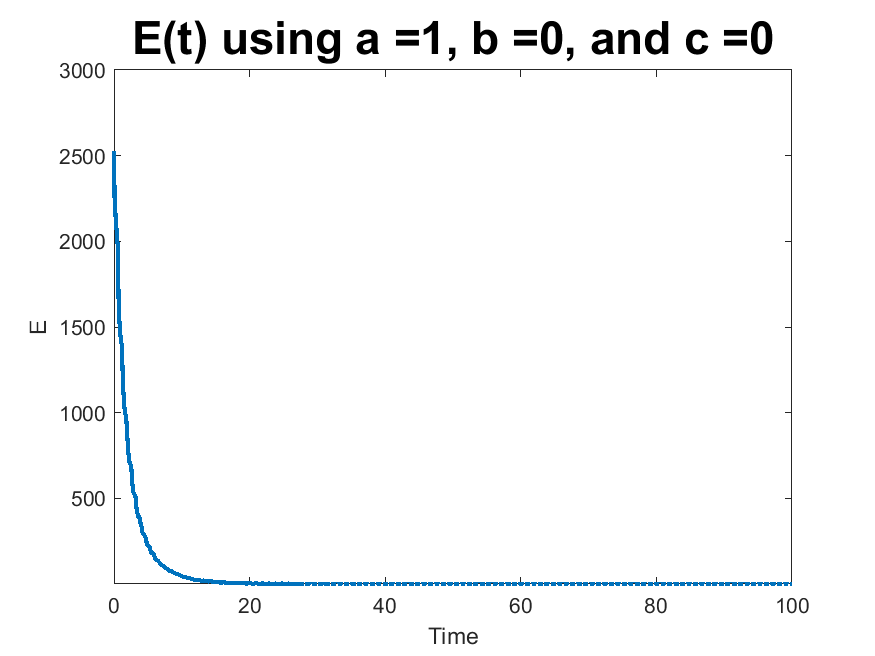} \\ 
&  & 
\end{tabular}
\end{center}
\caption{Space and time numerical solutions when $a=1$, $b=0$ and $c=0$
(Case 5).}
\label{case5}
\end{figure}

\begin{figure}[h]
\begin{center}
\begin{tabular}{ccc}
\includegraphics[width=5.5cm]{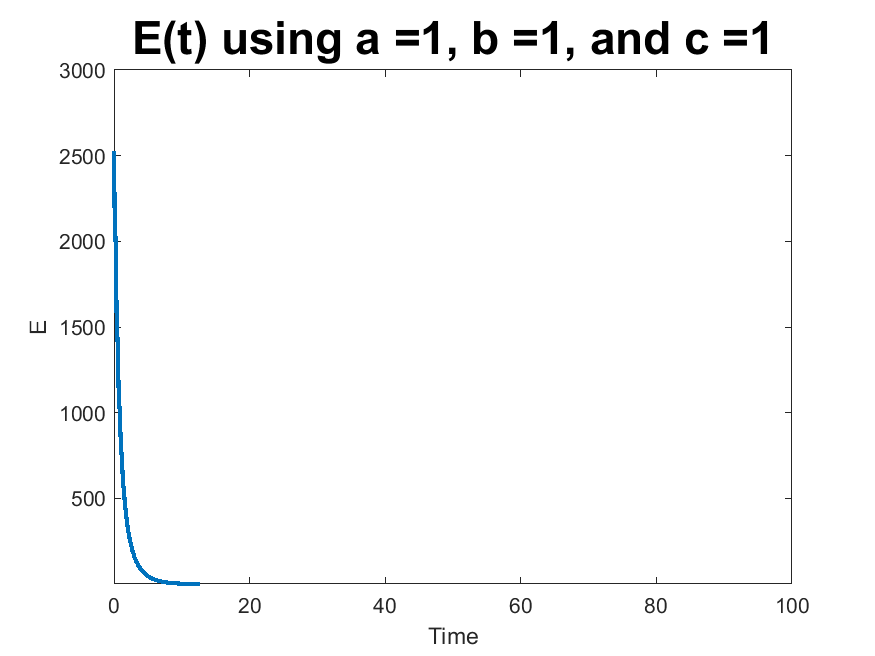} & %
\includegraphics[width=5.5cm]{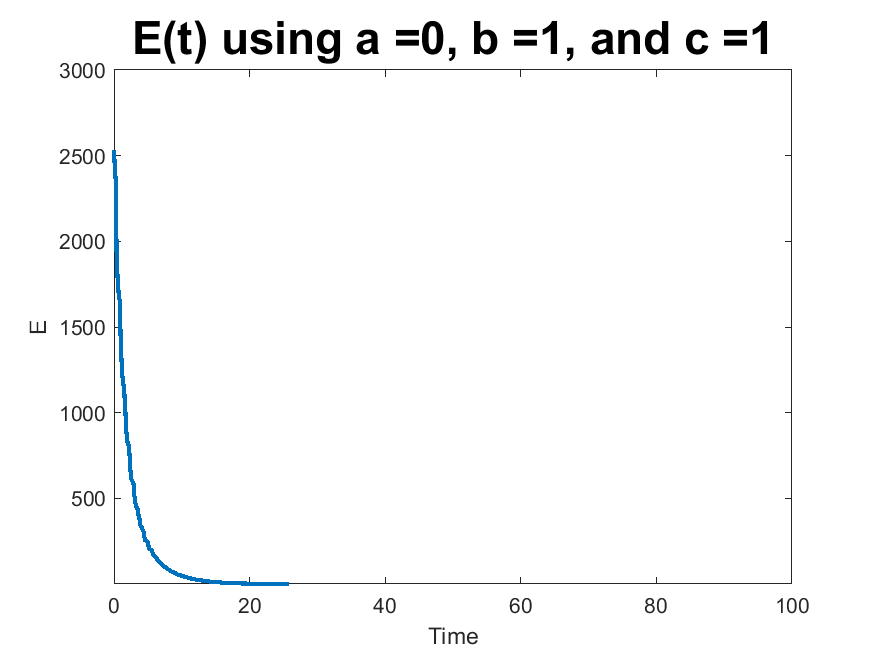} & %
\includegraphics[width=5.5cm]{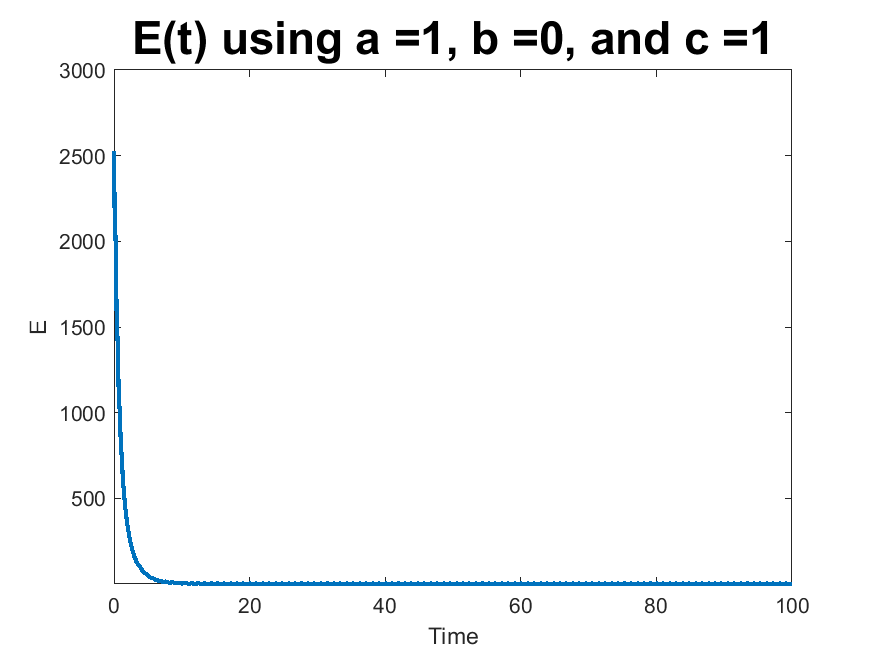} \\ 
\includegraphics[width=5.5cm]{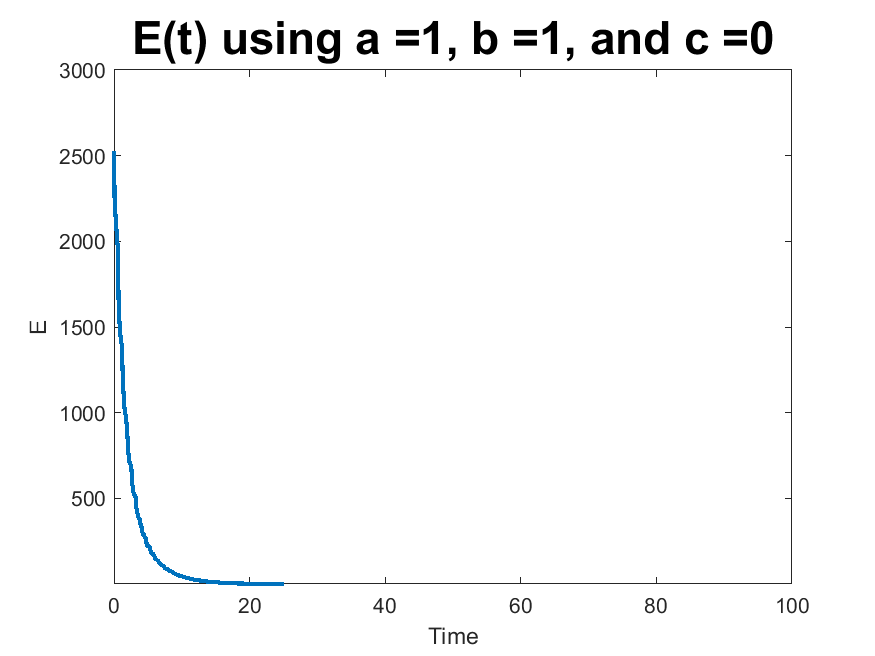} & %
\includegraphics[width=5.5cm]{Case_a_1_b_0_c_0_E.png} & %
\includegraphics[width=5.5cm]{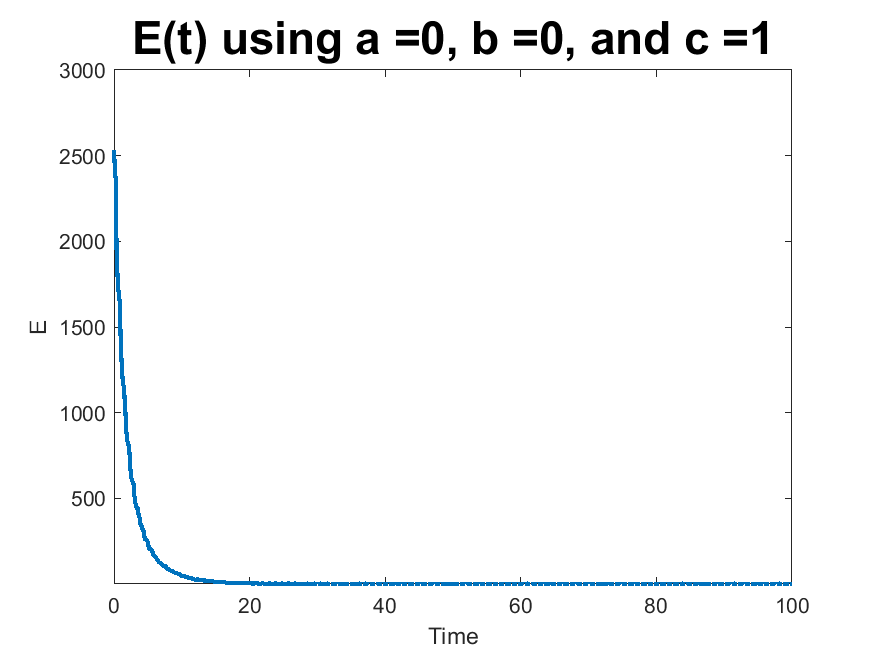} \\ 
&  & 
\end{tabular}
\end{center}
\caption{The natural energy $E(t)$ for all cases.}
\label{Et}
\end{figure}

$\newline
$ $\newline
$ $\newline
$ $\newline
$ $\newline
$

\section{Conclusion}

\noindent In this paper, we investigate the exponential stability of a
Lorenz Piezoelectric beam with partial viscous damping. Different cases has
been studied. we remark that it sufficient to controlled the stretching of
the centreline of the beam in $x-$direction to achieve the exponential
stability. The case where $b\neq 0$ and $(a,c)=(0,0)$ is still an open
problem,. However based on our numerical resulats we remark that we do not
obtain the exponential stability in the case where $b=1$ and $(a,c)=(0,0)$
(See Figure \ref{a=0b=1c=0}).

\begin{figure}[h!]
\begin{center}
\begin{tabular}{ccc}
\includegraphics[width=5cm]{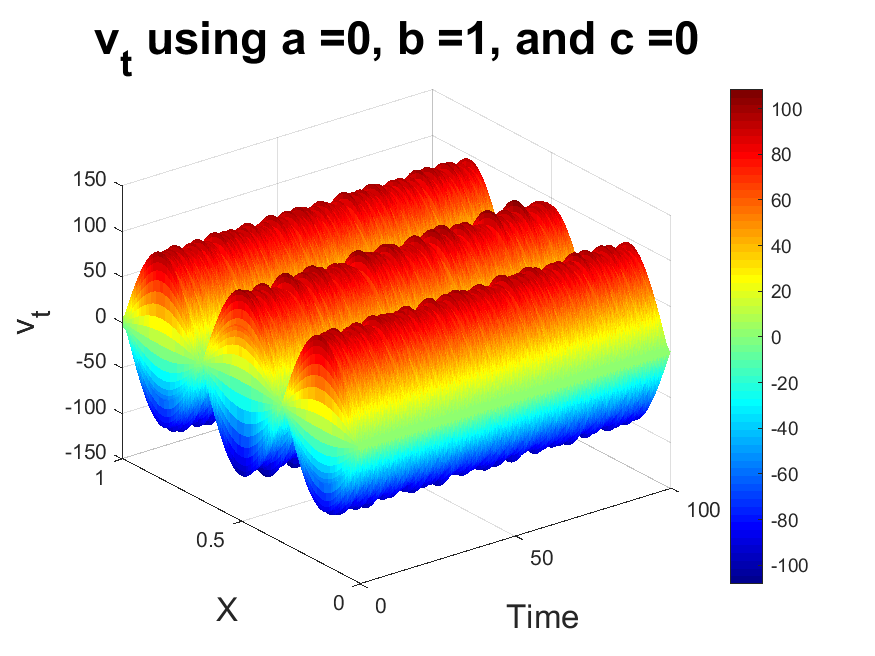} & %
\includegraphics[width=5cm]{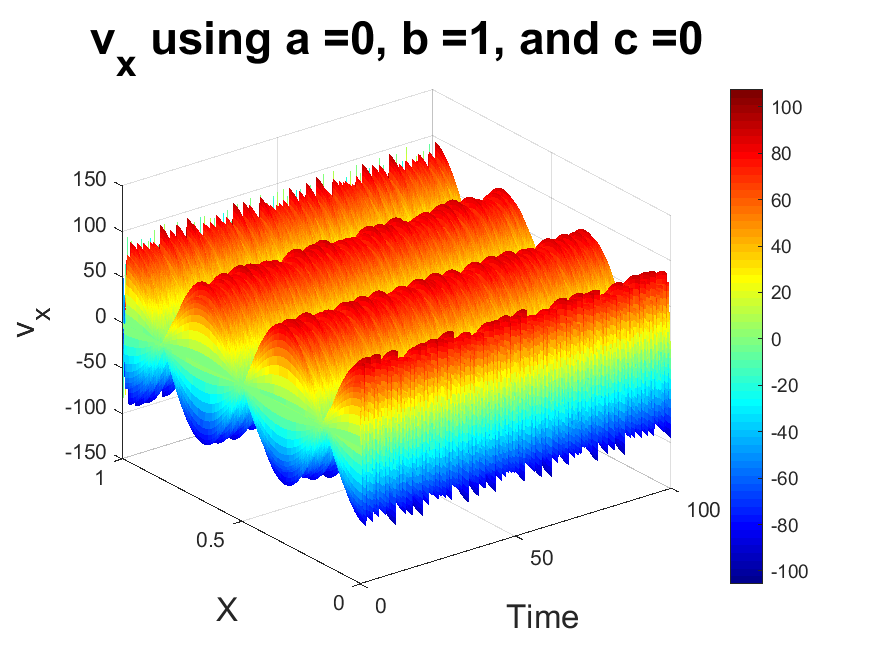} & %
\includegraphics[width=5cm]{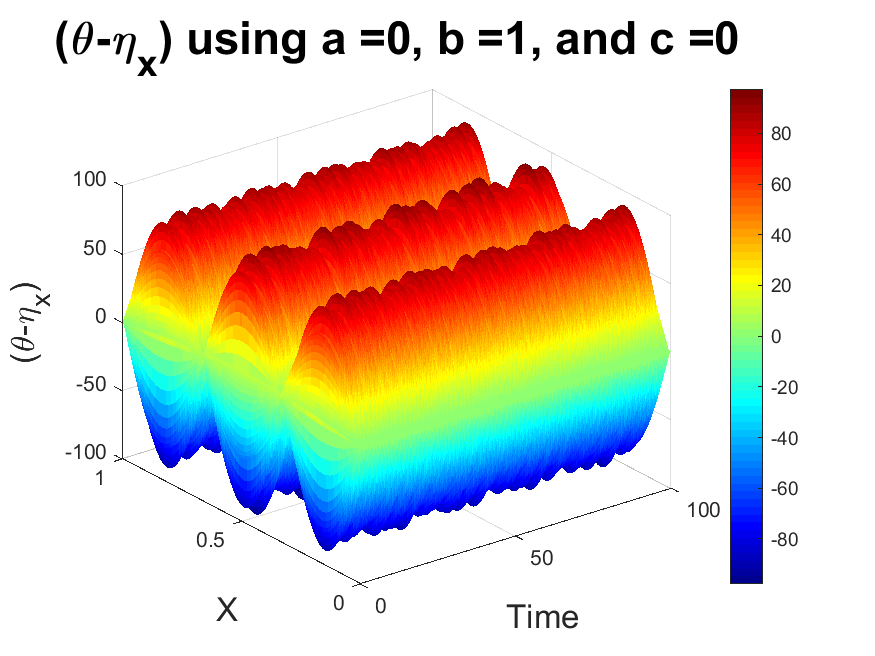} \\ 
\includegraphics[width=5cm]{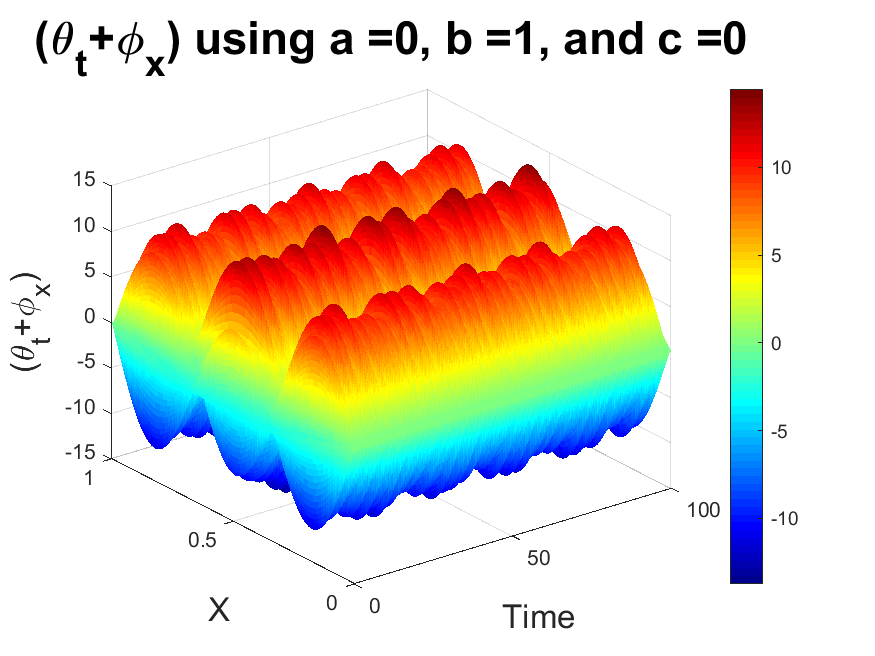} & %
\includegraphics[width=5cm]{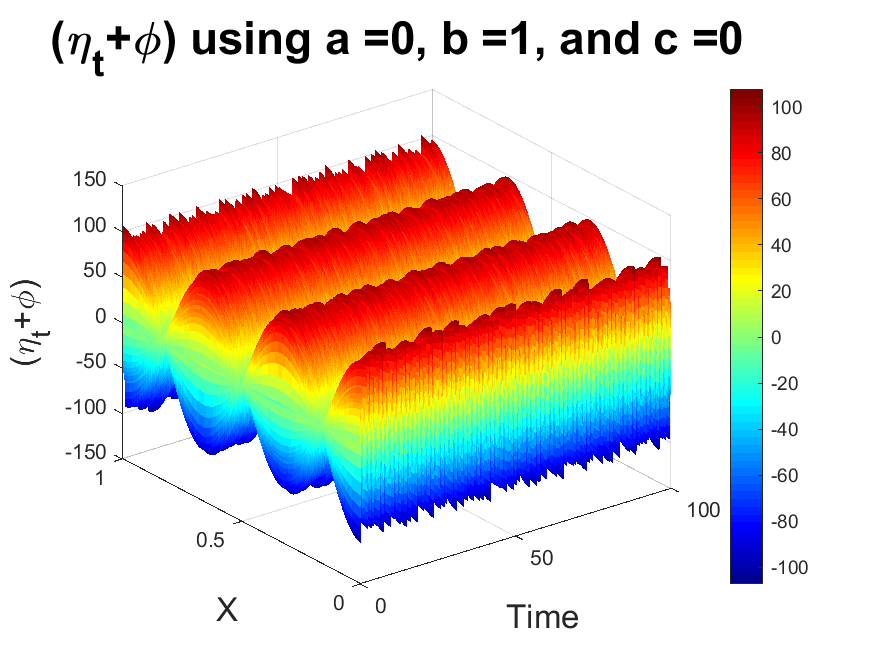} & %
\includegraphics[width=5cm]{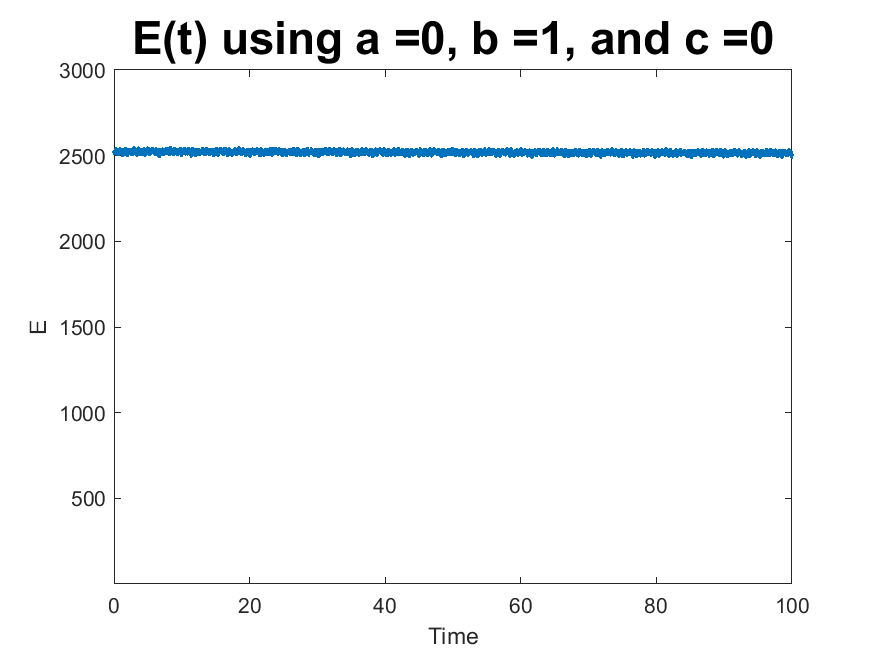} \\ 
&  & 
\end{tabular}
\end{center}
\caption{Space and time numerical solutions when $a=0$, $b=1$ and $c=0$
(open problem).}
\label{a=0b=1c=0}
\end{figure}


\end{document}